\numberwithin{equation}{section}
\newtheorem{prop}{Proposition}
\newtheorem{lemma}[prop]{Lemma}
\newtheorem{thm}[prop]{Theorem}
\newtheorem{cor}[prop]{Corollary}
\numberwithin{prop}{section}
\theoremstyle{definition}
\newtheorem{defn}[prop]{Definition}
\newtheorem{rmk}[prop]{Remark}
\newcommand{\be}{\beta}
\newcommand{\del}{\partial}
\newcommand{\delb}{\bar{\partial}}
\newcommand{\brs}[1]{\left| #1 \right|}
\newcommand{\gD}{\Delta}
\newcommand{\gs}{\sigma}
\newcommand{\gl}{\lambda}
\newcommand{\gw}{\omega}
\newcommand{\N}{\nabla}
\newcommand{\JJ}{\mathbb J}
\newcommand{\til}[1]{\widetilde{#1}}
\renewcommand{\bar}[1]{\overline{#1}}
\renewcommand{\t}{\mathfrak{t}}
\newcommand{\T}{\mathbb{T}}
\newcommand{\Pol}{P}
\newcommand{\standard}{Delzant}
\newcommand{\IP}[1]{\left<#1\right>}
\newcommand{\Hess}{\mathrm{Hess}}
\newcommand{\Gscal}{R}
\DeclareMathOperator{\Rc}{Rc}
\DeclareMathOperator{\tr}{tr}
\DeclareMathOperator{\divg}{div}
\DeclareMathOperator{\spn}{span}
\newcommand{\tor}{\mathfrak{t}}
\newcommand{\la}{\langle}
\newcommand{\ra}{\rangle}
\newcommand{\C}{\mathbb C}
\newcommand{\R}{\mathbb R}
\newcommand{\Z}{\mathbb Z}
\begin{document}

\title[Toric geometry of Generalized K\"ahler-Ricci solitons]{Toric geometry of Generalized K\"ahler-Ricci solitons}

\author{Vestislav Apostolov}
\address{V.\,Apostolov\\ D{\'e}partement de Math{\'e}matiques\\ UQAM \\
 and \\ Institute of Mathematics and Informatics\\ Bulgarian Academy of Sciences}
\email{\href{mailto:apostolov.vestislav@uqam.ca}{apostolov.vestislav@uqam.ca}}

\author{Giuseppe Barbaro}
\address{G.\,Barbaro\\ Department of Mathematics, Aarhus University, Ny Munkegade 118, 8000 Aarhus C,
Denmark}
\email{\href{mailto:g.barbaro@math.au.dk}{g.barbaro@math.au.dk}}

\author{Jeffrey Streets}
\address{Rowland Hall\\
         University of California, Irvine\\
         Irvine, CA 92617}
\email{\href{mailto:jstreets@uci.edu}{jstreets@uci.edu}}

\author{Yury Ustinovskiy}
\address{Y.\,Ustinovskiy, Susquehanna International Group}
\email{\href{mailto:yura.ust@gmail.com}{yura.ust@gmail.com}}

\date{September 1st, 2025}

\begin{abstract}
    We establish a local equivalence between toric steady K\"ahler-Ricci solitons and $A$-type toric generalized K\"ahler-Ricci solitons (GKRS).  Under natural global conditions we show this equivalence extends to complete GKRS, yielding a general construction of new examples in all dimensions.  We show that in four dimensions, all GKRS are either described by the generalized K\"ahler Gibbons-Hawking ansatz, or have split tangent bundle, or are $A$-type toric. This yields a local classification in four dimensions, together with a conjecturally exhaustive construction of complete symplectic-type examples. 
\end{abstract}

\thanks{V.A. was supported in part by an NSERC Discovery Grant. G.B. is a member of GNSAGA of INdAM and has been supported by Sapere Aude: DFF-Starting Grant ``Conformal geometry: metrics and cohomology.'' J.S. was supported in part by the NSF via DMS-2203536. The first named author thanks O. Biquard, C. Cifarelli, R. Conlon, and A. Lahdili for useful discussions.}

\maketitle

\section{Introduction}

Generalized K\"ahler-Ricci solitons (GKRS) are a natural generalization of Calabi-Yau metrics  beyond the K\"ahler setting.  They arise naturally from diverse considerations in mathematical physics \cite{GHR,ZabzineGCY}, generalized complex geometry \cite{HitchinGCY, GualtieriGCG}, and geometric/renormalization group flows \cite{PCF, GKRF}.  A triple $(g, I, J)$ of a Riemannian metric with compatible integrable complex structures $I$ and $J$ defines a generalized K\"ahler structure if \cite{GHR, GualtieriGKG}
\begin{align*}
    d^c_I \gw_I =&\ H = - d^c_J \gw_J, \qquad d H = 0.
\end{align*}
The data furthermore defines a GKRS if there exists a smooth function $f$ such that $(g, H, f)$ is a generalized Ricci soliton, i.e.
\begin{align*}
    \Rc^{\N^+} +\ \N^+ \N^+ f = 0, \qquad \N^+ = \N + \tfrac{1}{2} g^{-1} H.
\end{align*}
See \cite{GRFbook} for more background on generalized Ricci solitons.  Note that GKRS come equipped with two canonical holomorphic Killing fields (cf. Proposition \ref{p:GKRSsymmetries})
\begin{align*}
    X_I =&\ \tfrac{1}{2} I \left( \theta_I^{\sharp} - \N f \right), \qquad X_J = \tfrac{1}{2} J \left( \theta_J^{\sharp} - \N f \right).
\end{align*}
These define a notion of rank for GKRS which is the maximal dimension of the space spanned by these vector fields over points in the manifold (cf. Definition \ref{d:rank}).  In the case of real dimension $4$ and rank $1$, the second and fourth-named authors gave a complete local classification using a generalization of the Gibbons-Hawking ansatz \cite{SU2}.  Furthermore, they constructed an exhaustive list of complete, simply connected examples and characterized them as the only complete regular examples of rank 1.

Our work is initially inspired by the rank $2$ case in real dimension $4$, where the symmetries above consist of a local toric action.  We will further restrict to the setting of GK structures of symplectic-type, which means that one of the $2$-forms $F_{\pm} = 2 (I \pm J)^{-1} g$ is defined globally.  In dimension four this is always the local behavior on a dense open subset, outside the special case when $I$ and $J$ commute and the tangent bundle admits a global splitting~\cite{ApostolovGualtieri}.  The study of toric GK structures of symplectic-type was initiated by Boulanger \cite{boulanger2019toric}, and furthered by Wang \cite{wang2022toric, wang2023toric, wang2020toric}.  Boulanger gave a construction by deforming the complex structure of a toric K\"ahler metric by a constant matrix in the angular/momentum variables, whereas Wang extended this by also deforming the symplectic form by a constant matrix.  In \cite{ASU2} the authors showed that these corresponded geometrically to a deformation of the underlying Hitchin Poisson tensor \cite{HitchinPoisson}, and referred to these as type A and type B deformations respectively, a terminology we adopt here.  Note that it was shown in (\cite{ASU3} Proposition 4.2) that GKRS of symplectic-type on compact manifolds are necessarily K\"ahler Calabi-Yau, hence our study will focus on the noncompact case.

\subsection{Toric steady K\"ahler-Ricci solitons and generalized K\"ahler-Ricci solitons}

While our overall focus is in four dimensions, given the canonical link to toric geometry mentioned above we will broaden our scope to address toric GKRS in all dimensions.  Our first main result is a local equivalence between symplectic-type toric GKRS and A deformations of a toric steady gradient K\"ahler-Ricci soliton.  We briefly recall that a K\"ahler structure $(g_K, J_K)$ is a steady gradient K\"ahler-Ricci soliton if there exists a smooth function $f_K$ such that $(g_K, f_K)$ defines a steady Ricci soliton, i.e.
\begin{align}\label{d:KRS}
    \Rc_{g_K} +\ \N^2 f_K =&\ 0.
\end{align}
We shall abbreviate such structures as KRS, noting that we only consider \emph{steady} \emph{gradient} K\"ahler-Ricci solitons in this article.  These metrics come equipped with the canonical holomorphic Killing field $-\frac{1}{2} J \N f_K$.  For further background and existence results see \cite{apostolov2023hamiltonian, Biquard-Macbeth, bryant-KRS, cao1996existence, Cifarelli, Conlon-Deruelle, Dancer-Wang, Futaki-Wang, Schafer}.

\begin{thm} \label{t:Atypeconstruction} Let $(\mathring{M}, F, g, I, J, f)$ be a $2n$-dimensional symplectic-type GKRS admitting Killing fields $\{ X_1, \ldots, X_n \}$ of $(F, g, I, J)$ such that at each point of $\mathring{M}$ one has
\begin{enumerate}
    \item $\tor:={\rm span}_{\mathbb R} \{X_1, \ldots X_n\}$ is $n$-dimensional, 
    \item the canonical soliton vector fields $X_I, X_J \in \tor$,
    \item $F_{|_{\tor \times \tor}}\equiv 0$,
    \item $I \tor = J \tor$.
\end{enumerate} 
Then there exists an $F$-compatible, $(X_1, \ldots, X_n)$-invariant gradient steady K\"ahler-Ricci soliton $(g_K, J_K, f_K)$ on $\mathring{M}$, with soliton vector field $-\frac{1}{2} J_K \N f_K = \tfrac{1}{2}\left(X_I + X_J\right)$.  Furthermore there exists $A \in \Lambda^2 \tor$ such that $(F, g, I, J)$ is the $A$-deformation of $(F, g_K, J_K)$.

Conversely, suppose $(\mathring{M}, F, g_K, J_K, f_K)$ is a $2n$-dimensional gradient steady K\"ahler-Ricci soliton admitting Killing fields $\{X_1,\dots,X_n\}$ of $(F, g_K, J_K)$ such that at each point of $\mathring{M}$ one has
\begin{enumerate}
    \item $\tor:={\rm span}_{\mathbb R}\{X_1, \ldots X_n)$ is $n$-dimensional,
    \item the canonical soliton vector field $- \frac{1}{2} J \N f_K \in \tor$,
    \item $F_{|_{\tor \times \tor}}\equiv 0$.
\end{enumerate}
Then given $A \in \Lambda^2 \tor$, there exists a smooth function $f$ such that the $A$-deformation of $(g_K, J_K)$ is a symplectic-type GKRS satisfying the conditions above.
\end{thm}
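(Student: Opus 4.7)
The plan is to use the $A$-deformation formalism from \cite{ASU2} as a bridge: the task is to show that under the toric symmetry assumptions the GKRS equation and the steady KRS equation differ by an explicit $A$-deformation combined with a torus-invariant affine modification of the soliton potential.

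First I would set up angle-action coordinates. Under (i)--(iii), near a principal torus orbit one obtains coordinates $(t_a,\theta_a)$ with $X_a = \del/\del\theta_a$ and $F=\sum dt_a\wedge d\theta_a$. In such a gauge, condition (iv) $I\tor=J\tor$ is equivalent to the statement that both $I$ and $J$ preserve the vertical subbundle spanned by the $\del/\del\theta_a$. By the explicit description of symplectic-type toric GK complex structures in such charts (cf.\ \cite{boulanger2019toric, wang2022toric}, interpreted in terms of $A$-deformations in \cite{ASU2}), the data $(F,I,J)$ is then determined by a symmetric positive-definite matrix $G(t)$ encoding a K\"ahler potential together with a constant skew-symmetric matrix $A\in\Lambda^2\tor$ measuring the failure of $I$ to equal $J$. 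This identifies $(g_K, J_K)$ as the unique $F$-compatible $\tor$-invariant K\"ahler structure whose $A$-deformation returns $(g, I, J)$. The main step is then the equivalence of the two soliton equations: in toric gauge the Bismut torsion $H = d^c_I \gw_I$ and the generalized Ricci tensor $\Rc^{\N^+}$ are computable in terms of $G$ and $A$, and the transformation law for $\Rc^{\N^+}$ under $A$-deformations recorded in \cite{ASU2, ASU3} lets one write $\Rc^{\N^+}$ as $\Rc_{g_K}$ plus correction terms depending linearly on $A$. Combined with the difference between the Bismut Hessian $\N^+\N^+ f$ and the Levi-Civita Hessian $\N^2 f$, these corrections assemble, after using the $\tor$-invariance of all tensors, into the Hessian of a torus-invariant affine function $\ell(t)$ of the momentum coordinates. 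Setting $f_K = f + \ell(t)$ therefore turns $\Rc^{\N^+}+\N^+\N^+ f = 0$ into $\Rc_{g_K}+\N^2 f_K = 0$; that the induced K\"ahler soliton field $-\tfrac12 J_K\N f_K$ equals $\tfrac12(X_I+X_J)$ then follows from the definitions of $X_I, X_J$ and the explicit action of the $A$-deformation on the Lee forms. The converse runs the same computation backward: starting from $(g_K, J_K, f_K)$ satisfying (i)--(iii) and any $A\in\Lambda^2\tor$, apply the $A$-deformation, set $f = f_K - \ell(t)$, and verify both the GKRS equation and conditions (i)--(iv) by reversing the reductions of the forward direction.

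The main obstacle is the bookkeeping in the reduction of $\Rc^{\N^+}+\N^+\N^+ f = 0$ to $\Rc_{g_K}+\N^2 f_K = 0$. The pointwise transformation of $\Rc^{\N^+}$ under $A$-deformations is known, but turning it into an equivalence of soliton equations requires showing that all correction terms produced by $A$, once combined with the torsion contribution $\N^+\N^+ f - \N^2 f$, assemble into the Hessian of a single globally defined $\tor$-invariant function. This rests on three facts: the isotropy $F|_{\tor\times\tor}=0$ which makes the momentum fibration Lagrangian, the $\tor$-invariance of every tensor in play, and condition (ii) $X_I,X_J\in\tor$, which guarantees that the required correction $\ell$ depends only on the base coordinates $t_a$ and extends globally to $\mathring M$.
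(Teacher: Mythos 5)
Your high-level skeleton --- pass to momentum--angle coordinates, invoke Boulanger's characterization to exhibit $(F,g,I,J)$ as an $A$-deformation of a toric K\"ahler structure with symplectic potential $u$, then compare the two soliton equations --- is the same as the paper's. But the central step of your plan fails as stated: the GK soliton potential $f$ and the KRS potential $f_K=-\langle b,\mu\rangle$ do \emph{not} differ by a $\tor$-invariant affine function of the momenta. The paper computes (Lemma \ref{GKRS}, Remark \ref{r:GKRSpotential}) that
\[
f \;=\; -\log\det(\Hess(u)+A) - \langle c,\nabla u\rangle + const \;=\; \log\frac{\det\Hess(u)}{\det(\Hess(u)+A)} + f_K + const,
\]
i.e.\ the correction is the partial Ricci potential $\Psi=-\log(dV_F/dV_g)$, which depends nontrivially on $\mu$ through $u$ and $A$ (already for $n=2$ one has $\det(\Hess(u)+A)=\det\Hess(u)+a^2$, so $\Psi$ is not affine). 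Hence ``setting $f_K=f+\ell(t)$ with $\ell$ affine'' cannot convert one soliton equation into the other, and the correction terms you hope to assemble are not linear in $A$ (they involve $(\Hess(u)+A)^{-1}$). Identifying the correct $f$ is the real content of the equivalence and in the paper requires the explicit Lee-form computation of Lemma \ref{Lee-forms}. A smaller but symptomatic misstatement: condition (iv) is not that $I$ and $J$ preserve the vertical distribution $\tor$; they each interchange $\tor$ with a horizontal complement, and (iv) asserts these complements coincide.

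There is a second, independent gap in the forward direction. The paper does not merely match the two equations pointwise: it uses $\rho_J+L_{JX_J}F=0$ and $\rho_I+L_{IX_I}F=0$ separately, together with Lemma \ref{Bismut-Ricci} and the classification of $\tor$-invariant pluriharmonic functions (Lemma \ref{pluriharmonic}), to produce two a priori distinct constants $c_J, c_I\in\tor^*$. Only the existence of a \emph{single} potential $f$ serving both complex structures, combined with $\det(\Hess(u)+A)=\det(\Hess(u)-A)$, forces $c_I=c_J$, after which adding the two momentum relations yields the drift Monge--Amp\`ere equation $\log\det\Hess(u)+\langle c,\nabla u\rangle=\langle b,\mu\rangle+const$ characterizing the KRS. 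Your plan contains no mechanism for this identification, so even with the corrected potential it would not close.
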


\noindent For the proof of the first part we note that it follows from the prior work on toric generalized K\"ahler structures \cite{boulanger2019toric} that conditions (3) and (4) imply the structure is locally an $A$-deformation of a K\"ahler structure.  Further delicate local computations show that this is in fact a gradient K\"ahler-Ricci soliton.  These computations can essentially be reversed to give the proof of the second part, after deriving the relevant soliton potential in Lemma \ref{GKRS}.  Building upon the relationship between KRS and GKRS, we establish a duality result for toric K\"ahler-Ricci solitons stemming from Legendre duality in \S \ref{s:Legendre}.

The above result can be applied under some global assumptions in order to relate complete GKRS with complete KRS.  In particular we assume that $(M^{2n}, F)$ is a non-compact toric symplectic manifold, obtained from the Delzant construction associated to a unbounded simple convex Delzant polytope $\Pol \subset \R^n$, see \cite{karshon2015non, abreu2012scalar, cifarelli2022uniqueness} for the theory of such manifolds. In this situation, $(M, F)$ admits a Hamiltonian action of an $n$-dimensional compact torus $\T$ whose momentum map $\mu : M \to \R^n$ is proper and has image $\Pol$;  moreover, if  $\mathring{\Pol}$ denotes the interior of $\Pol$, then the open dense subset $\mathring{M}:= \mu^{-1}(\mathring{\Pol})$ of $M$ satisfies the first two conditions in Theorem~\ref{t:Atypeconstruction}.  We first  show in \S \ref{ss:extension} that a $\T$-invariant GKRS $(g, I, J)$ on $\mathring{M}$ is globally defined on $M$ if and only if the corresponding gradient steady KRS (which is defined on $\mathring{M}$ via Theorem~\ref{t:Atypeconstruction}) is globally defined on $M$.  This is an extension of results in the compact case \cite{boulanger2019toric, wang2020toric}, established here in the noncompact setting using ideas of \cite{apostolov2004hamiltonian,  sena2021uniqueness}.  Furthermore, in this setting we show equivalence of completeness of the two solitons:
\begin{thm} \label{t:completeness} (cf. Theorem \ref{t:completeness_text}) Suppose $(M, F, \T)$ is a non-compact toric symplectic manifold obtained by the Delzant construction with respect to a simple convex unbounded polytope $\Pol \subset \R^n$. Let $(g, I, J,f)$ and $(g_K, J_K, f_K)$ be respectively a $\T$-invariant GKRS and $\T$-invariant KRS on 
$M$ related by an $A$-deformation over $\mathring{M}$ as in Theorem~\ref{t:Atypeconstruction}.  Then $g$ is complete if and only if $g_K$ is complete.
\end{thm}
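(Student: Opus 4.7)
The plan is to reduce the completeness question to a direct comparison of $g$ and $g_K$ in Guillemin's action-angle coordinates on $\mathring M = \mu^{-1}(\mathring \Pol)$, combined with the smooth extension across the boundary faces of $\Pol$ coming from the Delzant construction. Specifically, on $\mathring M$ choose action-angle coordinates $(x,\theta)\in \mathring\Pol \times \T$ adapted to $F$, so that the $\T$-invariant KRS takes the symplectic-potential form $g_K = u_{ij}\,dx^i\,dx^j + u^{ij}\,d\theta_i\,d\theta_j$ for a convex function $u:\mathring\Pol \to \R$. By Theorem~\ref{t:Atypeconstruction} and the explicit formulas for the $A$-deformation established in the preceding sections, the associated GKRS metric $g$ differs from $g_K$ by a correction built out of $u^{ij}$ and the constant antisymmetric matrix $A^{ij}\in \Lambda^2\tor$. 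The key feature to verify is that the horizontal (momentum) block of $g$ agrees with $u_{ij}\,dx^i\,dx^j$, so that $|\N x^i|_g = |\N x^i|_{g_K}$, while the torus-angle block and the cross-terms are pointwise controlled by $u^{ij}$ and $A$.

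Next I would exploit $\T$-invariance and compactness of the $\T$-orbits to reduce the completeness question to two items: (i) the metric extends smoothly across each face $\mu^{-1}(F_k) \subset M$ of the Delzant boundary, and (ii) any sequence $p_k \in \mathring M$ with $\mu(p_k)$ escaping to infinity in $\mathring\Pol$ is at infinite distance from a fixed basepoint. Item (i) is simultaneously available for $g$ and $g_K$ by the analysis of \S\ref{ss:extension} together with the local Delzant model, and since both metrics are then smooth up to the boundary they are uniformly comparable on any compact neighborhood of $\mu^{-1}(\partial\Pol)$. Item (ii) is equivalent for $g$ and $g_K$ precisely because the horizontal blocks coincide: a curve $\gamma(t)$ in $\mathring M$ whose momentum image escapes to infinity has the same projected length under both metrics, namely $\int \sqrt{u_{ij}\dot x^i \dot x^j}\,dt$, and the angular portion contributes bounded length differences controlled by $A$ on any bounded momentum range. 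Hence finite-length curves for one metric correspond bijectively to finite-length curves for the other, and divergent sequences with respect to one are divergent with respect to the other.

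The main obstacle is verifying the structural fact underlying the first paragraph: that the $A$-deformation leaves the horizontal-horizontal block of the metric invariant and modifies only the angular block (plus controlled cross-terms). This is not a soft statement --- it relies on the rigidity of the $A$-deformation, namely that $A$ lies in $\Lambda^2\tor$ and therefore acts purely internally on the torus fibers, and would fail for a generic GK deformation where horizontal and angular directions are mixed by an unbounded amount. Once this comparison is established, the remainder of the argument is essentially formal: completeness of a $\T$-invariant metric on $M$ reduces, by compactness of the $\mu$-fibers over compact subsets of $\Pol$, to metric behavior on the polytope, and this behavior is dictated solely by $u_{ij}$ for both $g$ and $g_K$.
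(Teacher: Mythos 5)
Your structural starting point is correct and matches the paper's: in action--angle coordinates the momentum block of $g$ is $({\bf \Psi}^{\rm s})_{ij}\,d\mu_i\,d\mu_j = u_{ij}\,d\mu_i\,d\mu_j$, identical to that of $g_K$, and only the angular block changes (from $(\Hess(u))^{-1}$ to $(({\bf \Psi}^{-1})^{\rm s})$). This gives the one-sided bound $g \leq g_K$, so completeness of $g$ implies completeness of $g_K$ essentially for free (this is Proposition \ref{p:complete}). But your argument for the converse has two genuine gaps. First, your item (i) relies on $g$ and $g_K$ being ``uniformly comparable on any compact neighborhood of $\mu^{-1}(\partial\Pol)$,'' and your item (ii) only treats sequences whose momentum image escapes to infinity \emph{in} $\mathring{\Pol}$. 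Since $\Pol$ is unbounded, $\partial\Pol$ and the strata $\mu^{-1}(Q)$ over its faces $Q$ are themselves non-compact, and a $g$-finite-length divergent curve may escape to infinity along, or repeatedly crossing, these boundary strata; no compact neighborhood covers this region, so neither item applies there. The paper handles exactly this by stratifying $N=M/\T$ by the faces of $\Pol$, showing each $\mu^{-1}(Q)$ is totally geodesic and that the restricted structures are again $A_Q$-deformations with the \emph{same} symplectic potential $u|_{\mathring{Q}}$, hence with equal horizontal parts stratum by stratum.

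Second, the reduction you call ``essentially formal'' is where the real work lies. The correct intermediate object is the quotient (Hausdorff) distance $d^N_g$ on $N$, and the statement one needs is $d^N_g = d^N_{g_K}$ together with the characterization that $(M,g)$ is complete iff $d^N_g(q_0,\cdot)$ is proper. The identity $d^N_g = d^N_{g_K}$ does not follow merely from comparing lengths of curves in $\mathring{M}$: a priori the distance between two points of $\mathring{N}$ could be realized only by curves passing through the boundary strata, where the reduced metric $\Hess(u)$ is singular. This is why the paper invokes the length-space machinery and proves the local statement (Lemma \ref{l:local-metric}) that $d^N_g$, $d^{\mathring{N}}_g$ and $d^{\mathring{N}}_{g_{\rm red}}$ agree on small geodesically convex balls, before assembling the global equality over the stratification. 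Your phrase ``the angular portion contributes bounded length differences controlled by $A$ on any bounded momentum range'' is also not usable in the needed direction, since the relevant curves need not have bounded momentum range; the correct move is to discard the angular contribution entirely (it only increases length) and lift curves from $N$ horizontally, which is what the two inequalities around \eqref{1} in the paper accomplish. In short: right comparison of the metrics, but the passage from that comparison to equivalence of completeness needs the quotient-distance and stratification arguments, which are absent.
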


\noindent In fact we will show that completeness of the GKRS implies completeness of the associated KRS even without the Delzant condition (cf. Proposition \ref{p:complete}).  The reverse implication requires a combination of arguments from the theory of length metric spaces together with a precise understanding of the Riemannian geometry of the manifold-with-corners induced by the moment map.  These arguments are perhaps known to experts but we include them as we did not find a precise reference.

Theorems \ref{t:Atypeconstruction} and \ref{t:completeness} yield large classes of complete symplectic-type GKRS in all dimensions from constructions of complete toric KRS in \cite{apostolov2023hamiltonian,Biquard-Macbeth, cao1996existence, Cifarelli, Conlon-Chau-Lai, Conlon-Deruelle, Futaki-Wang, Schafer}.  For instance, the general existence and uniqueness results in \cite{Biquard-Macbeth, Conlon-Deruelle} for complete steady KRS's and Theorem~\ref{t:completeness} yield:
\begin{cor}\label{c:type-A}  Any equivariant toric crepant resolution of a toric Calabi-Yau K\"ahler cone admits  complete  generalized K\"ahler-Ricci solitons.
\end{cor}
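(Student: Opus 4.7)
The plan is to combine three ingredients: (i) the existence of complete $\T$-invariant steady toric K\"ahler-Ricci solitons on equivariant toric crepant resolutions of toric Calabi-Yau K\"ahler cones, due to Biquard-Macbeth and Conlon-Deruelle; (ii) the $A$-deformation construction of GKRS from KRS given by the second half of Theorem \ref{t:Atypeconstruction}; and (iii) the completeness equivalence of Theorem \ref{t:completeness}.

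First I would fix an equivariant toric crepant resolution $\pi : M \to X$ of a toric Calabi-Yau K\"ahler cone $X$ of complex dimension $n$. The cited existence results produce a complete $\T$-invariant steady gradient K\"ahler-Ricci soliton $(g_K, J_K, f_K)$ on $M$ whose soliton vector field $-\tfrac{1}{2} J_K \N f_K$ lies in the Lie algebra $\tor$ of the compact $n$-torus $\T$ acting on $M$. Since $M$ is toric, the $\T$-orbits over the interior $\mathring{\Pol}$ of the moment polytope are Lagrangian (so $F|_{\tor \times \tor} \equiv 0$) and the infinitesimal generators $\{X_1, \ldots, X_n\}$ span an $n$-dimensional space at each point of $\mathring{M} = \mu^{-1}(\mathring{\Pol})$. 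Thus all three hypotheses of the second half of Theorem \ref{t:Atypeconstruction} hold on $\mathring{M}$, and for any $A \in \Lambda^2 \tor$ the theorem yields a symplectic-type GKRS $(F, g, I, J, f)$ on $\mathring{M}$ as the $A$-deformation of $(g_K, J_K, f_K)$.

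The next step is to extend the GKRS from $\mathring{M}$ to all of $M$ and to verify global completeness. The equivariant toric crepant resolutions of toric Calabi-Yau K\"ahler cones are obtained from the Delzant construction for an unbounded simple convex polytope $\Pol \subset \R^n$ (a smooth fan subdivision of the cone polytope corresponding to $X$), which places us in the setting of Theorem \ref{t:completeness} and of the extension discussion in \S \ref{ss:extension}. The extension result asserts that the $\T$-invariant $A$-deformation of a globally defined $\T$-invariant KRS on $M$ is itself globally defined on $M$; and Theorem \ref{t:completeness} then transfers completeness of $g_K$ to completeness of $g$. Varying $A \in \Lambda^2 \tor$ produces a family of such complete GKRS, recovering the KRS at $A=0$ and yielding genuinely non-K\"ahler examples for $A \neq 0$.

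The main obstacle, if any, is confirming that the complete steady toric KRS constructions in the literature produce solitons whose ambient torus action agrees with the Delzant toric structure on the crepant resolution $M$, so that the moment polytope $\Pol$ is indeed unbounded simple convex in the sense required by Theorem \ref{t:completeness}. This is a standard feature of those constructions and of the toric geometry of crepant resolutions, so no genuine geometric difficulty arises beyond the verifications already packaged into Theorems \ref{t:Atypeconstruction} and \ref{t:completeness}.
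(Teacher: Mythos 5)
Your proposal is correct and follows essentially the same route as the paper, which obtains the corollary by combining the existence results of Biquard--Macbeth and Conlon--Deruelle for complete $\T$-invariant steady toric KRS on such resolutions with the $A$-deformation construction of Theorem \ref{t:Atypeconstruction}, the extension theory of \S \ref{ss:extension}, and the completeness equivalence of Theorem \ref{t:completeness}. The paper states this only as a one-line deduction, and your write-up supplies exactly the intended verifications.
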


\subsection{Four dimensional log nondegenerate GKRS}

Remarkably, in dimension 4, we establish in Proposition~\ref{p:4dlagrangian} and Lemma~\ref{p:spanprop} that the necessary conditions (1)-(4) of Theorem~\ref{t:Atypeconstruction} hold a priori, thus exhibiting \emph{all} rank $2$ GKRS as $A$-deformations of an underlying toric KRS over a dense open subset.  In the statement below $\gs$ is the real Poisson tensor canonically associated to a GK manifold (cf. Proposition \ref{p:HitchinPoisson} below).

\begin{thm} \label{t:4dmainthm}  Let  $(M^4, g, I, J, f)$ be a rank 2 GKRS with $\gs$ not vanishing identically.  Then $(g, I, J)$ is an $A$-deformation over the dense open subset $\mathring{M}$ of points where $\sigma \neq 0$ of a locally toric KRS $(g_K, J_K, f_K)$.  If, furthermore $(g, I, J)$ is symplectic-type on $M$ and $g$ is complete, then $(g_K, J_K, f_K)$ is a complete non-Ricci-flat KRS on $M$.
\end{thm}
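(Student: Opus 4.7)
The strategy is to apply Theorem~\ref{t:Atypeconstruction} locally on $\mathring M$ using the structural results in dimension four, and then globalize using the symplectic-type and completeness hypotheses.

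\textbf{Step 1 (Local $A$-deformation on $\mathring{M}$).} By Proposition~\ref{p:GKRSsymmetries}, the canonical vector fields $X_I, X_J$ are Killing fields of $(g, I, J)$. On the open subset $\mathring M = \{\sigma \neq 0\}$, set $\tor := \spn_{\R}\{X_I, X_J\}$. The rank-two hypothesis together with Lemma~\ref{p:spanprop} guarantee that $\dim \tor = 2$ pointwise on $\mathring M$, verifying conditions~(1) and~(2) of Theorem~\ref{t:Atypeconstruction}, while Proposition~\ref{p:4dlagrangian} verifies the lagrangian condition~(3) and $I\tor = J\tor$ of~(4). By real-analyticity of solutions to the GKRS system, $\mathring M$ is dense in $M$ whenever $\sigma \not\equiv 0$. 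Applying the first direction of Theorem~\ref{t:Atypeconstruction} with $X_1 = X_I$, $X_2 = X_J$ produces an $F$-compatible, $\tor$-invariant steady gradient KRS $(g_K, J_K, f_K)$ on $\mathring M$ of which $(F, g, I, J)$ is an $A$-deformation for some $A \in \Lambda^2 \tor$. This proves the first assertion.

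\textbf{Step 2 (Global extension under symplectic-type and completeness).} Assume now that $F$ is defined on all of $M$ and that $g$ is complete. Completeness together with the Killing property of $X_I, X_J$ imply that their flows are globally defined on $M$, generating a toric $\R^2$-action whose restriction to $\mathring M$ is locally free. The extension results of~\S\ref{ss:extension} (noncompact adaptations of~\cite{boulanger2019toric, wang2020toric} along the lines of~\cite{apostolov2004hamiltonian, sena2021uniqueness}) then show that $(g_K, J_K, f_K)$ extends smoothly from $\mathring M$ to a globally defined $\R^2$-invariant KRS on all of $M$. Completeness of $g_K$ then follows from Proposition~\ref{p:complete}, the direction which does not require a Delzant structure.

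\textbf{Step 3 (Non-Ricci-flatness).} The soliton vector field of $(g_K, J_K, f_K)$ equals $\tfrac{1}{2}(X_I + X_J)$, and the rank-two hypothesis forces $X_I + X_J \not\equiv 0$, so $f_K$ is non-constant. If $(g_K, J_K)$ were Ricci-flat, then $\N^2 f_K \equiv 0$, so $\N f_K$ would be a nonzero parallel vector field on the complete K\"ahler manifold $(M, g_K, J_K)$. The de Rham splitting of the universal cover would then transfer, via the $A$-deformation, to a splitting of the GK data, forcing $\sigma \equiv 0$ and contradicting the hypothesis. The main obstacle is Step~2, namely verifying that the KRS structure defined a priori only on $\mathring M$ extends smoothly across the degeneracy locus $M \setminus \mathring M$ and that completeness transfers without assuming $M$ is Delzant a priori; the splitting argument in Step~3 also requires careful tracking of how parallel fields interact with the $A$-deformation.
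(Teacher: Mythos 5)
Your Steps 1 and 2 follow the paper's route: the local statement is precisely the combination of Proposition \ref{p:4dlagrangian}, Lemma \ref{p:spanprop} and Theorem \ref{t:Atypeconstruction}, and the extension and completeness of $g_K$ are both contained in Proposition \ref{p:complete} (which gives the smooth extension of $(g_K,J_K)$ across $M\setminus\mathring M$ together with $g_K\geq g$, so the "main obstacle" you flag at the end is already resolved there). Two small imprecisions: (i) to invoke Theorem \ref{t:Atypeconstruction} on $\mathring M$ you must first know the structure is symplectic-type there; the paper obtains this, and the density of $\mathring M$, from the holomorphicity of $\gs$ (Proposition \ref{p:HitchinPoisson}, \cite{AGG}), not from real-analyticity of the soliton system, which is neither established in the paper nor needed; (ii) the pointwise statement $\dim\tor_p=2$ on all of $\mathring M$ rests on the constancy of $\lambda=\Omega(X_I,X_J)$ from Lemma \ref{lem: Y_decomposition}, not on Lemma \ref{p:spanprop} alone, which only applies at points where the span is already two-dimensional.

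The genuine gap is Step 3. The implication that a de Rham splitting of the complete Ricci-flat K\"ahler manifold $(M,g_K,J_K)$ "transfers to a splitting of the GK data, forcing $\gs\equiv 0$" is false: flat $\C^2=\C\times\C$ is split, yet its $A$-deformation with $A$ nondegenerate has $\gs$ nowhere vanishing by \eqref{J-poisson}. The split-tangent-bundle dichotomy of \cite{ApostolovGualtieri} concerns the vanishing of $[I,J]$, not a metric splitting of $g_K$, so no contradiction is produced this way. (Note also that non-constancy of $f_K$ alone does not rule out Ricci-flatness, since the soliton equation then only forces $\N^2 f_K=0$.) The paper's argument is Corollary \ref{c:rank1fromCY}: by Lemma \ref{toric-soliton} one has $X_I=\tfrac{1}{2}(b-A(c))$ and $X_J=\tfrac{1}{2}(b+A(c))$, where $b$ is the soliton vector of the toric KRS; if $g_K$ were Ricci-flat then $b=0$, hence $X_I=-X_J$ and the GKRS has rank at most one, contradicting the rank $2$ hypothesis. (Completeness and the scalar curvature identity of Lemma \ref{l:KRSscalar} are only needed for the converse direction of that corollary.) You should replace Step 3 by this argument.
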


\begin{rmk}
\begin{enumerate}
\item Combining Theorem \ref{t:4dmainthm} with the local theory~\cite{bryant-KRS} of  gradient toric K\"ahler-Ricci solitons, this provides a complete local classification of such GKRS's. 
\item In the case where $\gs$ does vanish identically, the tangent bundle admits a splitting into a direct sum of line bundles (cf. \cite{ApostolovGualtieri}).  The classification of rank $2$ solitons in this setting remains an open question.
\item We emphasize the global assumption in Theorem~\ref{t:4dmainthm} that the GKRS is symplectic-type. For instance, the universal cover  $\C^2\setminus \{0\}$ of a diagonal Hopf complex surface is an example of complete GKRS which is not symplectic-type.
\item Note that any four-dimensional locally toric KRS on $M$ which is not Ricci-flat gives rise to a symplectic-type \emph{rank 2} GKRS on $M$.  In particular Corollary~\ref{c:type-A} specializes in dimension $4$ as follows: 
Fix $\Gamma=\Z_{k+1}\subset (\C^* \times \C^*) \cap {\rm SL}(2, \C)$ be a finite cyclic subgroup and let $M^4_k$  be (the unique) toric crepant resolution of the (Calabi-Yau cone) $\C^2/\Z_{k+1}$.  Then $M^4_k$ admits rank $2$ GKRS.  
\item By Theorems~\ref{t:completeness} and \ref{t:4dmainthm}, the $A$-deformations of the multi-Euguchi-Hanson and the multi-Taub-NUT K\"ahler Ricci-flat metrics on $M^4_k$ define complete toric GKRS of rank 1. These examples appear as special cases in the classification obtained in \cite{SU2}, see Proposition~\ref{p:toric-rank1} below, but the observation that they are naturally linked to the gravitational instantons on these spaces is new.
\end{enumerate}
\end{rmk}

To establish Theorem~\ref{t:4dmainthm}, we recast nondegenerate rank 2 solitons using the Gibbons-Hawking ansatz of \cite{SU2}.  While \cite{SU2} focuses on the rank $1$ case, it gives a general description of four-dimensional GK structures with a Killing symmetry in terms of reduced data on a three-manifold.  In the rank $2$ case we thus obtain two distinct reductions, which refines the structure considerably.  We first derive an expression for the generalized Ricci potential $\Phi$ using mixed momentum coordinates for the two symplectic forms $F_{\pm}$.  This in turn gives an explicit description of the horizontal components of the two canonical symmetries, in turn yielding a direct local proof of the key property that the torus $\tor = \spn_{\mathbb R} \{X_I, X_J\}$ satisfies $I \tor = J \tor$. Moreover we show the that four-dimensional symplectic-type GKRS satisfy $F_+(X_I, X_J) = 0$ in general.  Thus we are in a position to apply Theorem \ref{t:Atypeconstruction} to conclude Theorem \ref{t:4dmainthm}.   However we also provide a direct, elementary, derivation of the underlying symplectic potentials associated to $F_{\pm}$ in this setting.  These are shown to satisfy a drift Monge-Amp\`ere equation, and moreover are Legendre dual to each other, giving a concrete manifestation of the general Legendre duality discussion in \S \ref{s:Legendre}.

The constructions in~\cite{cao1996existence,apostolov2023hamiltonian},  as well as the products of two cigar solitons on $\C$,   or of a cigar soliton with a flat $\C$,  give rise to many more explicit non Ricci-flat complete toric KRS's on $M^4_0=\C^2$, which by our results  lead to complete rank 2 GKRS on $\R^4$.
Furthermore, these complete toric KRS's on $\C^2$ define orbifold KRS metrics on the singularity $\C^2/\Z_{k+1}$. The gluing result~\cite{Biquard-Macbeth} yields the existence of a complete, 
non Ricci-flat toric KRS's resolution (and hence of complete rank 2 GKRS's) on $M^4_k, \, k\geq 1$,   which have the asymptotic behavior of Cao's  ${\rm U}(2)$-invariant example on $\C^2/\Z_{k+1}$.  It appears plausible to us that all the singular KRS's models on $\C^2/\Z_{k+1}$ can be equivariantly resolved by complete toric KRS's on $M^4_k, \, k \geq 1$,  keeping the asymptotic at infinity of the given singular model metric; furthermore, it is natural to ask whether or not  these examples will exhaust all complete $4$-dimenional toric KRS's,  and hence also the complete symplectic-type GKRS of rank 2, corresponding to the unbounded Delzant polytopes as in Theorem~\ref{t:completeness}.

\section{Local Theory of Toric Generalized K\"ahler-Ricci solitons} \label{s:toricGKRS}

We begin this section with a review of fundamental properties of generalized K\"ahler-Ricci solitons, including (mostly known) properties of the a priori symmetries.  We then recall the basic properties of $A$-deformations of toric K\"ahler structures, and then give the proof of Theorem \ref{t:Atypeconstruction}.  We end this section with a discussion of the role of Legendre duality.

\subsection{Definitions and a priori symmetries}

\begin{defn} \label{d:PCS} 
\begin{enumerate}
    \item 
A quadruple $(M, g, H, f)$ of a Riemannian manifold with closed three-form $H$ and smooth function $f$ is a \emph{gradient generalized Ricci soliton} if
\begin{align*}
    \Rc^{\N^+} + \N^+ \N^+ f = 0, \qquad \N^+ = \N + \tfrac{1}{2} g^{-1} H.
\end{align*}

\item A pluriclosed structure $(M, g, J)$ is a \emph{gradient Ricci soliton} if there exists a function $f$ such that the triple $(g, - d^c \gw, f)$ defines a gradient generalized Ricci soliton.
\vskip 0.1in
\item A generalized K\"ahler structure $(M, g, I, J)$ is a \emph{generalized K\"ahler-Ricci soliton} (GKRS) if there exists a function $f$ such that the triple $(g, - d^c_I \gw_I, f)$ defines a gradient generalized Ricci soliton.  An elementary remark is that this holds if and only if also $(g, - d^c_J \gw_J, f)$ defines a gradient generalized Ricci soliton.
\end{enumerate}
\end{defn}

\noindent We next recall an a priori symmetry present on pluriclosed solitons:

\begin{prop} \cite{SU2} \label{p:PCsolitonsymm} Given a pluriclosed manifold $(M, g, J)$, the data $(g, - d^c \gw, f)$ is a gradient soliton if and only if the vector field
\begin{align*}
V = \tfrac{1}{2} (\theta^{\sharp} - \N f)
\end{align*}
satisfies
\begin{align*}
    L_V J = 0, \qquad L_{JV} g = 0, \qquad L_V \gw = \rho_B^{1,1}.
\end{align*}
\end{prop}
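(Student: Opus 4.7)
The plan is to split the gradient generalized Ricci soliton equation $\Rc^{\N^+} + \N^+ \N^+ f = 0$ into its $g$-symmetric and $g$-antisymmetric parts and match each part to a Lie-derivative condition on $V$.  Using the standard identities $\Sym(\Rc^{\N^+}) = \Rc - \tfrac{1}{4} H \circ H$ and $\Sym(\N^+ \N^+ f) = \N^2 f$, the symmetric part reduces to an Einstein-type equation $\Rc - \tfrac{1}{4} H \circ H + \N^2 f = 0$, while the antisymmetric part, via the dual identities for the skew components $-\tfrac{1}{2} d^* H$ and $-\tfrac{1}{2} i_{\N f} H$, becomes $d^* H + i_{\N f} H = 0$.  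This decomposition is clean since the symmetric/antisymmetric splitting is algebraically orthogonal, so the soliton equation is equivalent to the pair of equations being satisfied simultaneously.

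Next I would invoke the classical pluriclosed identity expressing $\rho_B^{1,1}$ in terms of $\Rc$, the square tensor $H \circ H$, and derivatives of the Lee form $\gt$; rewriting the symmetric soliton equation through this identity and computing $L_V \gw$ directly via Cartan's formula applied to $V = \tfrac{1}{2}(\gt^\sharp - \N f)$ — using the Hermitian identity relating $\gt$ to $d^* \gw$ — should yield $L_V \gw = \rho_B^{1,1}$.  Since $\rho_B^{1,1}$ is of type $(1,1)$, this equality forces the type-preservation condition $L_V J = 0$, and its $J$-conjugate reformulation then reads as the Killing condition $L_{JV} g = 0$.  The remaining antisymmetric soliton equation $d^* H + i_{\N f} H = 0$ is then precisely what ensures type-compatibility in this computation, i.e.\ it is equivalent to $L_V J = 0$ once the Hermitian identities relating $d^* H$, $\gt$, and $d \gw$ on a pluriclosed manifold are unpacked.

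For the converse, I would reverse this chain: starting from the three conditions on $V$, Cartan's formula together with the pluriclosed $\rho_B^{1,1}$ identity recovers the symmetric soliton equation from $L_V \gw = \rho_B^{1,1}$, while $L_V J = 0$ combined with the Hermitian identities returns the antisymmetric equation $d^* H + i_{\N f} H = 0$.  The main technical obstacle will be maintaining consistent sign and normalization conventions across three tightly interrelated objects — the Lee form $\gt$, the operator $d^c$, and the Bismut Ricci form $\rho_B$ — and in particular pinning down the exact pointwise identity relating $\rho_B^{1,1}$ to $\Rc$, $H \circ H$, and $\N \gt$ that lets the three Killing-type conditions on $V$ disentangle cleanly.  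Once that identity is in place, the rest of the argument reduces to verifying that both directions of each implication use only equivalences, so no information is lost.
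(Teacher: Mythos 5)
The paper does not actually prove this proposition --- it is imported verbatim from \cite{SU2} --- so there is no in-text argument to compare against; I am evaluating your sketch on its own terms. Your opening step is correct and standard: the soliton equation $\Rc^{\N^+} + \N^+\N^+ f = 0$ does split into the symmetric equation $\Rc - \tfrac{1}{4} H\circ H + \N^2 f = 0$ and the skew equation $d^* H + i_{\N f} H = 0$, and feeding these into the Streets--Tian identity for $\rho_B^{1,1}$ is the right framework.

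The genuine gap is in how you produce $L_V J = 0$ and $L_{JV} g = 0$. The assertion that ``$L_V \gw = \rho_B^{1,1}$ together with $\rho_B^{1,1}$ being of type $(1,1)$ forces $L_V J = 0$'' is false: a vector field can satisfy $L_V \gw = 0$ (trivially of type $(1,1)$) without being real-holomorphic --- any non-holomorphic generator of symplectomorphisms of $\C^n$ with its standard structure does this. Worse, the logical dependency runs the other way: to pass from the symmetric soliton equation, which is a statement about $L_V g$, to the $2$-form identity $L_V \gw = \rho_B^{1,1}$ you need $L_V \gw = (L_V g)(J\cdot,\cdot)$, which is exactly $L_V J = 0$; so your proposed order of deduction is circular. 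Your fallback claim, that the skew equation $d^* H + i_{\N f} H = 0$ ``is equivalent to'' $L_V J = 0$, is unsubstantiated and is not an identity I recognize (the two conditions do not even have the same number of pointwise components); in \cite{SU2} the holomorphy of $\gt^{\sharp} - \N f$ is established by an honest computation combining both soliton equations with the contracted Bianchi identity for the Bismut connection, not by a type bookkeeping argument. Finally, $L_{JV} g = 0$ is not a ``$J$-conjugate reformulation'' of $L_V J = 0$: knowing that $V$ is holomorphic and that $L_V g$ is $J$-invariant says nothing a priori about the distinct tensor $L_{JV} g$, so the Killing property of $JV$ must be derived separately from the skew part of the system. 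Until you supply actual derivations of $L_V J = 0$ and $L_{JV} g = 0$, both directions of the claimed equivalence are incomplete.
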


Turning now to generalized K\"ahler structures, we first recall the canonically associated Poisson tensor \cite{AGG,HitchinPoisson,PontecorvoCS}.  We say that a generalized K\"ahler structure is \emph{nondegenerate} if this Poisson tensor is invertible.

\begin{prop} \label{p:HitchinPoisson} (\cite{AGG, HitchinPoisson,PontecorvoCS}) Given $(M^{2n}, g, I, J)$ a generalized K\"ahler structure, the tensor
\begin{align*}
    \gs :=&\ \tfrac{1}{2} [I, J] g^{-1}
\end{align*}
is a real Poisson tensor.  Furthermore,
\begin{align*}
    \gs \in \Lambda^{2,0 + 0,2}_I(TM) \cap \Lambda^{2,0 + 0,2}_J (TM), \quad \delb_I \gs_I^{2,0} = 0, \quad \delb_J \gs_J^{2,0} = 0.
\end{align*}
\end{prop}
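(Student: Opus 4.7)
My plan is to separate the statement into its algebraic content (skew-symmetry and type decomposition of $\gs$) and its differential content (the Poisson and holomorphicity properties), and to handle the two by very different means.

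For the algebraic part, I would first verify that $\gs$ is a bivector. The compatibility conditions $g(IX,Y)=-g(X,IY)$ and $g(JX,Y)=-g(X,JY)$ translate to $I^T = -gIg^{-1}$ and $J^T = -gJg^{-1}$, so a short transposition gives $\gs^T = \tfrac12 g^{-1}(J^T I^T - I^T J^T) = \tfrac12(JI-IJ)g^{-1} = -\gs$. For the type condition, I would verify $\gs(I\cdot, I\cdot) = -\gs$ by computing $I\cdot \tfrac12[I,J]g^{-1}\cdot I^T$ and using $g^{-1}I^T = -Ig^{-1}$ together with $I^2 = -\mathrm{Id}$ to collapse the expression to $-\gs$; the identical computation with $I$ and $J$ interchanged yields the $J$-type statement.

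For the differential content, my strategy is to reduce everything to showing that $\gs^{2,0}_I$ is a holomorphic Poisson bivector on the complex manifold $(M, I)$. In local $I$-holomorphic coordinates $(z^1,\dots,z^n)$, the same type decomposition computation shows that $[I,J]$ sends the $(1,0)$-distribution into the $(0,1)$-distribution, giving components $(\gs^{2,0}_I)^{ij} = \i\, J^i_{\bar k}\, g^{\bar k j}$. To establish $\del_{\bar l}(\gs^{2,0}_I)^{ij} = 0$, I would expand the derivative and substitute the relations among the first derivatives of $g$ and $J$ implied by the vanishing of the Nijenhuis tensor of $J$ together with the generalized K\"ahler identity $d^c_I\gw_I + d^c_J\gw_J = 0$. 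Once this holomorphicity is in hand, the $(2,1)$ and $(1,2)$ components of $[\gs,\gs]$ vanish automatically, since they are expressed through $\delb_I \gs^{2,0}_I$, while the remaining $(3,0)$ component $[\gs^{2,0}_I,\gs^{2,0}_I]$ can be verified to vanish by a further brief computation using the same identities. The holomorphicity of $\gs^{2,0}_J$ and the Poisson property for its Schouten bracket follow by the symmetric argument with the roles of $I$ and $J$ swapped.

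The main obstacle is precisely the coordinate computation establishing $\delb_I \gs^{2,0}_I = 0$: the integrability of $J$ is naturally phrased in terms of the vanishing of its Nijenhuis tensor, and translating this into useful identities on $\del_{\bar l} J^i_{\bar k}$ within $I$-holomorphic coordinates is delicate; the generalized K\"ahler torsion identity is exactly what bridges the gap. A cleaner but less elementary alternative would be to invoke Gualtieri's equivalence between generalized K\"ahler structures and pairs of commuting integrable generalized complex structures on the Courant algebroid $(TM\oplus T^*M, [\cdot,\cdot]_H)$: for any integrable generalized complex structure, the imaginary part of the anchor on its $+\i$-eigenbundle yields a Poisson bivector of the appropriate holomorphic type, and one identifies this bivector with $\gs$ directly, obtaining all the claimed properties simultaneously.
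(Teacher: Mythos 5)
The paper does not actually prove this proposition: it is quoted from \cite{AGG, HitchinPoisson, PontecorvoCS}, so the relevant comparison is with those proofs rather than with anything in the text. Your algebraic half is correct and complete: the transposition argument for skew-symmetry and the computation $I \gs I^* = -\gs$ (and its $J$-analogue) are exactly the standard pointwise verifications, and your coordinate formula $(\gs_I^{2,0})^{ij} = \i\, J^i_{\bar k} g^{\bar k j}$ is right. The problem is that the entire substantive content of the proposition --- $\delb_I \gs_I^{2,0} = 0$ and $[\gs_I^{2,0},\gs_I^{2,0}]=0$ --- is deferred to computations you describe but do not perform, and you yourself flag the first as the ``main obstacle.'' As written this is a plan, not a proof. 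In the cited literature the holomorphicity computation is not done by brute force on the Nijenhuis tensor of $J$ in $I$-holomorphic coordinates; it is organized by the Bismut connections $\N^{\pm} = \N \pm \tfrac12 g^{-1}H$, with respect to which $I$ (resp.\ $J$) is parallel, which converts $\delb_I \gs_I^{2,0}$ into an algebraic expression in $H$ that cancels by the generalized K\"ahler condition $d^c_I\gw_I = -d^c_J\gw_J$. Without some such device it is far from clear your raw coordinate expansion closes up, and the same remark applies to the $(3,0)$ component $[\gs^{2,0}_I,\gs^{2,0}_I]$, which is automatic only when $n\le 2$.

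Your proposed ``cleaner alternative'' contains a concrete error. For a generalized complex structure $\JJ$, the Poisson bivector obtained from (the imaginary part of) the anchor on the $+\i$-eigenbundle is the upper-right block of $\JJ$; for the two commuting generalized complex structures of a GK structure these blocks are $\tfrac12(I- J)g^{-1}$ and $\tfrac12(I+J)g^{-1}$ (the tensor $\pi_{\JJ}$ appearing in the proof of Proposition \ref{p:GKRSsymmetries}), \emph{not} $\gs = \tfrac12[I,J]g^{-1}$. These genuinely differ: in the K\"ahler case $I=J$ one of them is $-\gw^{-1}$, which is nondegenerate, while $\gs$ vanishes identically. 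So the step ``one identifies this bivector with $\gs$ directly'' fails, and recovering $\gs$ and its holomorphic Poisson property from the Courant-algebroid picture requires the finer analysis of Hitchin and Gualtieri (via the decomposition $TM\oplus T^*M = C_+\oplus C_-$ and the induced holomorphic reductions), not merely reading off the anchor.
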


\begin{defn} \label{d:Riccipot} Let $(M^{2n}, g, I, J)$ be a generalized K\"ahler structure defined by closed spinors $\psi_1, \psi_2$.  Define the \emph{partial Ricci potentials} by
\begin{align*}
    \Psi_i = - \log \frac{(\psi_1,\bar{\psi}_1)}{dV_g}.
\end{align*}
Define the \emph{Ricci potential} by
\begin{align*}
    \Phi = \Psi_1 - \Psi_2 = - \log \frac{ (\psi_1, \bar{\psi}_1)}{(\psi_2, \bar{\psi}_2)}.
\end{align*}
\end{defn}

\begin{prop} (\cite{apostolov2022generalized} Proposition 4.3) \label{p:transgression} Let $(M^{2n}, g, I, J)$ be a generalized K\"ahler structure defined by closed spinors $\psi_1, \psi_2$.  Then
\begin{gather*}
    (I - J) d \Psi_1 = I \theta_I - J \theta_J, \qquad (I + J) d \Psi_2 = I \theta_I + J \theta_J\\
    \gs d \Phi = \theta_I^{\sharp} - \theta_J^{\sharp}, \qquad 
    \rho_I = - \tfrac{1}{2} d J d \Phi, \qquad \rho_J = - \tfrac{1}{2} d I d \Phi.
\end{gather*}
\end{prop}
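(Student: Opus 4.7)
The plan is to derive all five identities from the closed pure spinor characterization of a generalized K\"ahler structure, using an explicit expansion of the Mukai pairings $(\psi_i, \bar{\psi}_i)$. Working on a dense open subset where the types of the two generalized complex structures $\JJ_\pm$ built from $(I, J)$ are locally constant, one has local normal forms
\[
\psi_1 = e^{b_+ + \i \omega_+} \wedge \Omega_+, \qquad \psi_2 = e^{b_- + \i \omega_-} \wedge \Omega_-,
\]
where $\Omega_\pm$ are locally defined $(k_\pm, 0)$-forms for the transverse integrable complex structures, and $b_\pm, \omega_\pm$ are real two-forms built algebraically from $g, I, J$.  Explicit computation of $(\psi_i, \bar{\psi}_i)$ via the Chevalley pairing factors into a ``symplectic direction'' volume piece times $\Omega_\pm \wedge \bar{\Omega}_\pm$, expressing $\Psi_i$ modulo locally defined smooth functions that are killed by the appropriate combinations of $d, I, J$.

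The two Lee-form identities $(I - J) d \Psi_1 = I \theta_I - J \theta_J$ and $(I + J) d \Psi_2 = I \theta_I + J \theta_J$ follow by differentiating these normal forms and imposing $d\psi_i = 0$.  Closure of each pure spinor gives a relation of the form $d \Omega_\pm = \alpha_\pm \wedge \Omega_\pm$ for a specific 1-form $\alpha_\pm$ that encodes the Lee forms $\theta_I, \theta_J$ through the explicit shape of $\JJ_\pm$.  The key algebraic point is that $(I - J)$ annihilates the ``symplectic leaf'' directions of $\JJ_+$ while retaining the ``transverse complex'' directions, and symmetrically for $(I + J)$ and $\JJ_-$; projecting the gradient of $\log(\psi_i, \bar{\psi}_i)/dV_g$ onto these complementary pieces produces the two claimed equations.

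The remaining three identities follow algebraically.  Subtracting the two Lee-form identities, applying $g^{-1}$, and using $\gs = \tfrac{1}{2}[I, J] g^{-1}$ yields $\gs d \Phi = \theta_I^\sharp - \theta_J^\sharp$.  For $\rho_I = - \tfrac{1}{2} d J d \Phi$, I would start from the standard Hermitian formula expressing the Chern-Ricci form of $(I, g)$ as $- \tfrac{1}{2} d J d \log \det g$ plus a Lee-form correction, and substitute for $\log \det g$ in terms of $\Psi_1$ and $\Psi_2$ via the explicit shapes of $\psi_1, \psi_2$.  A clean cancellation occurs in the difference $\Phi = \Psi_1 - \Psi_2$: the ``non-Chern'' corrections in $\Psi_1$ and $\Psi_2$ differ by exactly the Lee-form terms needed to convert the Chern-Ricci expression into $-\tfrac{1}{2} d J d \Phi$.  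The companion identity $\rho_J = -\tfrac{1}{2} d I d \Phi$ follows by the symmetric argument, interchanging $I \leftrightarrow J$ and $\psi_1 \leftrightarrow \psi_2$.

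The main obstacle is the Clifford-algebraic bookkeeping intrinsic to the pure spinor formulation, together with the need to juggle multiple normal forms across loci where the types of $\JJ_\pm$ jump.  On the open strata where the types are locally constant the derivation is a careful but finite computation; extending to the full manifold is automatic once the identities are phrased as equalities of globally defined smooth tensors, since they hold on a dense open set and both sides depend continuously on the bihermitian data.
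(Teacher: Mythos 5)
First, a point of reference: the paper does not prove this proposition at all --- it is quoted from \cite{apostolov2022generalized}, Proposition 4.3 --- so your attempt can only be measured against that reference. Your overall framework (local pure-spinor normal forms for the two generalized complex structures, expansion of the Mukai pairings, and use of $d\psi_i=0$) is indeed the framework used there, but as written your argument has concrete gaps. The most serious one is the step producing $\rho_I=-\tfrac12\, dJd\Phi$: you invoke a ``standard Hermitian formula expressing the Chern-Ricci form of $(I,g)$ as $-\tfrac12\, dJd\log\det g$ plus a Lee-form correction.'' No such formula exists. The Chern--Ricci form of $(g,I)$ is locally $-\tfrac{1}{2}dId\log\det g$, i.e. it is $dd^c_I$-exact, with $d^c_I$ and not $d^c_J$; and $\rho_I$ in the statement is the \emph{Bismut}--Ricci form. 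The identity $\rho_I=-\tfrac12\, dJd\Phi$ genuinely entangles the two complex structures and cannot be reached by decorating a single-complex-structure Ricci formula with Lee-form corrections. The mechanism one actually needs is that suitable tensor products of the canonical bundles of $I$ and $J$ are trivialized (up to conjugation and a positive factor) by $\psi_1\otimes\psi_2$ and $\psi_1\otimes\bar{\psi}_2$, so that $\rho_I\pm\rho_J$ arise as $dd^c$ of the logarithms of the corresponding norms, i.e.\ of $\Psi_1$ and $\Psi_2$; the GK compatibility $d^c_I\gw_I=-d^c_J\gw_J$ is what converts these into the asserted mixed expressions. Your sketch contains none of this input, and the ``clean cancellation'' is asserted rather than exhibited.

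A second, fixable, gap: $\gs\, d\Phi=\theta_I^\sharp-\theta_J^\sharp$ does not follow by ``subtracting the two Lee-form identities and applying $g^{-1}$'' --- plain subtraction yields $(I-J)d\Psi_1-(I+J)d\Psi_2=-2J\theta_J$, which is not the desired combination. The correct route is to apply $(I+J)$ to the first identity and $(I-J)$ to the second, use $(I+J)(I-J)=-[I,J]=-(I-J)(I+J)$, and observe that the cross terms $IJ\theta_J-JI\theta_I$ cancel upon combining, leaving $\tfrac12[I,J]d\Phi=\theta_I-\theta_J$; raising indices and using $g$-orthogonality of $I$ and $J$ then gives the claim. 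Finally, your ``key algebraic point'' driving the first two identities is garbled: the Poisson tensor of the generalized complex structure paired with $\Psi_1$ is proportional to $(I-J)g^{-1}$, so $(I-J)$ annihilates the transverse \emph{complex} directions (where $I=J$) and is an isomorphism on the symplectic-leaf directions, essentially the opposite of what you state. Since the first two identities are the heart of the proposition and everything else is algebra downstream of them, the Mukai-pairing computation and the extraction of the Lee forms from $d\psi_i=0$ need to be carried out explicitly rather than gestured at before this can count as a proof.
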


The next proposition summarizes the nature of the a priori symmetries present on a GKRS.  The result is essentially known, though the proof is spread over many works so we reproduce some simplified arguments here for convenience.

\begin{prop} \label{p:GKRSsymmetries} Let $(M^{2n}, g, I, J, f)$ be a GKRS.  Define the vector fields
\begin{align*}
    V_I &=  \tfrac{1}{2} (\theta_I^{\sharp} - \N f), \qquad  \, \, \, \, \, \,  V_J = \tfrac{1}{2} (\theta_J^{\sharp} - \N f), \\
    X_I = J V_I &=\tfrac{1}{2} (I\theta_I^{\sharp} - I\N f), \qquad  X_J = J V_J = \tfrac{1}{2} (J\theta_J^{\sharp} - J\N f).
\end{align*}
Then 
\begin{align*}
0 =&\ L_{V_I} I = L_{V_J} J,\\
0 =&\ L_{V_I} \gs = L_{V_J} \gs,\\
0 =&\ L_{X_I} I = L_{X_I} J = L_{X_I} g,\\
0 =&\ L_{X_J} I = L_{X_J} J = L_{X_J} g,\\
0 =&\ [V_I, X_J] = [X_I, X_J] = [V_J, X_I].
\end{align*}
\begin{proof} By definition the data $(g, - d^c_I \gw_I, f)$ defines a pluriclosed generalized Ricci soliton, therefore by Proposition \ref{p:PCsolitonsymm} we have
\begin{align*}
    L_{V_I} I = 0, \quad L_{I V_I} g = 0.
\end{align*}
As remarked above the data $(g, - d^c_J \gw_J, f)$ is also a pluriclosed gradient soliton, thus again by Proposition \ref{p:PCsolitonsymm} we have
\begin{align*}
    L_{V_J} J = 0, \quad L_{J V_J} g = 0.
\end{align*}
Next, arguing as in (\cite{ASU3} Proposition 4.6), we obtain
\begin{align*}
    L_{V_I} \gs = L_{I V_I} \gs = L_{V_J} \gs = L_{J V_J} \gs = 0.
\end{align*}
Now we turn to arguments from \cite{ASUScal}.  In particular, defining $\pi_{\mathbb J} = \tfrac{1}{2} (I + J) g^{-1}$, from (\cite{ASUScal} Lemma 3.6) we can express
\begin{align*}
    Z = I V_I + J V_J = \tfrac{1}{2} (I \theta_I^{\sharp} + J \theta_J^{\sharp}) - \pi_{\JJ} df = \divg_{e^{-f} dV_g} \pi_{\JJ}.
\end{align*}
It follows using this and the above identities that
\begin{align*}
    0 =&\ L_{Z} \pi_{\JJ} = L_{I V_I + J V_J} \left( \tfrac{1}{2} g^{-1} (I + J) \right) = \tfrac{1}{2} g^{-1} \left( L_{J V_J} I + L_{I V_I} J \right).
\end{align*}
Thus furthermore
\begin{align*}
    0 =&\ L_{I V_I} (\gs g) = L_{I V_I} \left( \tfrac{1}{2} g^{-1} [I, J] \right) = \tfrac{1}{2} g^{-1} [ I, L_{I V_I} J] = \tfrac{1}{2} g^{-1} [ L_{J V_J} I, I] = g^{-1} \left( L_{J V_J} I \right) I.
\end{align*}
Hence $L_{J V_J} I = 0$, and similarly $L_{I V_I} J = 0$.
\end{proof}
\end{prop}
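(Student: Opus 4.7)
The plan is to exploit the observation encoded in Definition~\ref{d:PCS}(3) that a GKRS $(g,I,J,f)$ induces a gradient pluriclosed Ricci soliton for \emph{both} of the pluriclosed structures $(g,I)$ and $(g,J)$, so that the a priori symmetry result of Proposition~\ref{p:PCsolitonsymm} is available on each side. This will immediately yield the ``diagonal'' symmetries involving $V_I,I,g$ and $V_J,J,g$. The coupled ``off-diagonal'' symmetries $L_{X_I}J=0$ and $L_{X_J}I=0$ are the real content and will require combining information from both sides, which I plan to accomplish by identifying $X_I+X_J$ with a weighted divergence of the real Poisson tensor $\pi_{\JJ}=\tfrac12(I+J)g^{-1}$.

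\textbf{Diagonal symmetries and Poisson invariance.} Applying Proposition~\ref{p:PCsolitonsymm} to the pluriclosed soliton $(g,-d^c_I\gw_I,f)$ with characteristic vector field $V_I$ gives $L_{V_I}I=0$ and $L_{X_I}g=L_{IV_I}g=0$; running the same argument on the $J$-side produces $L_{V_J}J=0$ and $L_{X_J}g=0$. The invariance of the Hitchin Poisson tensor $\gs=\tfrac12[I,J]g^{-1}$ under all four fields follows from a direct Lie-derivative computation using only the symmetries just obtained, along the lines of (\cite{ASU3},~Proposition~4.6).

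\textbf{Off-diagonal symmetries (main obstacle).} The subtlest step, and the one I expect to drive the proof, is establishing $L_{X_I}J=0$ and $L_{X_J}I=0$; these cannot be read off from Proposition~\ref{p:PCsolitonsymm} applied separately to the two pluriclosed structures. The strategy is to set
\begin{equation*}
Z := X_I+X_J = \tfrac12\bigl(I\theta_I^{\sharp}+J\theta_J^{\sharp}\bigr)-\pi_{\JJ}df,
\end{equation*}
and combine the GKRS equation with Proposition~\ref{p:transgression} to identify $Z$ with the weighted divergence $\divg_{e^{-f}dV_g}\pi_{\JJ}$. Since such a weighted divergence is a Poisson vector field of $\pi_{\JJ}$, one obtains $L_Z\pi_{\JJ}=0$. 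Expanding $L_Z\pi_{\JJ}=L_{IV_I+JV_J}\bigl(\tfrac12 g^{-1}(I+J)\bigr)$ via Leibniz and discarding terms that vanish by the diagonal invariances reduces this identity to the coupled equation $L_{JV_J}I+L_{IV_I}J=0$. Finally, taking a Lie derivative of the defining relation $\gs g = \tfrac12[I,J]$ against $L_{X_I}\gs=0$ and $L_{X_I}g=0$ produces a commutator identity of the form $[I,L_{X_I}J]=0$ which, together with the coupled equation, isolates $L_{X_I}J=L_{X_J}I=0$.

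\textbf{Commutators.} With all four vector fields then known to preserve $g$, $I$, $J$, and $\gs$, the brackets $[V_I,X_J]$, $[X_I,X_J]$, $[V_J,X_I]$ vanish because each of $V_I, V_J, X_I, X_J$ is built canonically from $g, I, J, \theta_I, \theta_J$ and $\N f$: a Killing field preserving $g, I, J$ automatically preserves $\theta_I$ and $\theta_J$, and a standard argument using the soliton equation shows that $f$ is constant along the flows of $X_I$ and $X_J$, so the remaining building block $\N f$ is also preserved. Cartan's formula then yields the vanishing brackets.
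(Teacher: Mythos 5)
Your proposal is correct and follows essentially the same route as the paper: Proposition \ref{p:PCsolitonsymm} applied to both pluriclosed structures for the diagonal symmetries, the argument of (\cite{ASU3} Proposition 4.6) for invariance of $\gs$, the identification $X_I+X_J=\divg_{e^{-f}dV_g}\pi_{\JJ}$ to obtain the coupled identity $L_{JV_J}I+L_{IV_I}J=0$, and the Lie derivative of $\gs g=\tfrac12[I,J]$ to decouple it. Your explicit treatment of the vanishing brackets is a welcome addition, as the paper's written proof stops short of addressing them.
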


\begin{defn} \label{d:rank} Let $(M^{2n}, g, I, J)$ be a GKRS.  Define the \emph{rank} of the soliton as
\begin{align*}
    \mbox{rank}(M) := \sup_M \dim \spn \{X_I, X_J\}.
\end{align*}
\end{defn}

\begin{rmk} Notice that by (\cite{apostolov2024classification} Proposition 2.6) a compact GKRS has rank zero if and only if it is K\"ahler Calabi-Yau.  The four-dimensional rank $1$ case was classified in \cite{SU2}.
\end{rmk}

We record one further key property in four dimensions.  We note that a similar vanishing result was derived in all dimensions in (\cite{ASU3} Proposition 4.6), although the argument requires compactness, which we do not want to assume here.

\begin{prop} \label{p:4dlagrangian} Let $(M^4, g, I, J)$ be a GKRS.  Then where defined, the symplectic $2$-forms $F_{\pm}=-2g(I \pm J)^{-1}$ satisfy
\begin{align*}
    F_+(X_I, X_J) = F_- (X_I, X_J) = 0.
\end{align*}
\begin{proof}
We first observe that in four dimensions, the pluriclosed soliton equation $d^* (e^{-f} H) = 0$ is equivalently expressed as
    \begin{align*}
        0 = d \theta - df \wedge \theta.
    \end{align*}
    As $d \theta$ is primitive since the metric is Gauduchon, it follows that $g(\theta^{\sharp}, J \N f) = 0$.  In the GKRS setting we thus obtain
    \begin{align*}
        0 =&\ g(\theta_I^{\sharp}, I \N f) = g(\theta_J^{\sharp}, J \N f).
    \end{align*}
        In $4$ dimensions, we have $(I+J)^2 = -2(p+1) {\rm Id}$ where $p = - \frac{1}{4}{\rm trace}(IJ)$ is the angle function, so that $F_+ = \frac{1}{(1+p)}g(I+J)$.  Thus using further that $\theta_I = - \theta_J$,
\begin{align*}
F_+(X_I, X_J) =&\ \frac{1}{(1+p)}g\big((I+J)X_I, X_J\big) =  \frac{1}{(1+p)}\big(g(X_I, V_J) - g(V_I, X_J)\big)\\
                    &= \frac{2}{(1+p)}\left( g(\theta_I^{\sharp}, (I + J) \N f) \right)=0,
\end{align*}
as claimed.  The computation for $F_-(X_I, X_J)$ is similar.
\end{proof}
\end{prop}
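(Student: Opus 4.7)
My plan is to combine the a priori symmetries of Proposition \ref{p:GKRSsymmetries} with a four-dimensional reformulation of the generalized Ricci soliton equation in terms of the Lee form, exploiting the identity $\theta_I + \theta_J = 0$ valid on any GK manifold. Using the pointwise 4D identity $(I+J)^2 = -2(1+p)\Id$ with $p = -\tfrac{1}{4}\tr(IJ)$, one has $F_+ = \frac{1}{1+p}g((I+J)\,\cdot\,,\,\cdot\,)$, so the problem reduces to the vanishing of $g((I+J)X_I, X_J)$; I expect this to collapse into orthogonalities between $\theta_I^\sharp$, $\N f$, and the two complex structures.

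The first step is to observe that a GKRS supplies pluriclosed gradient Ricci solitons for both $(g, I, f)$ and $(g, J, f)$ (see Definition \ref{d:PCS}), so the antisymmetric part of the generalized Ricci soliton equation, $d^\ast(e^{-f}H)=0$, holds with respect to each structure. In complex dimension two, pluriclosed and Gauduchon coincide, and $H$ has a simple expression in terms of the Lee form and the K\"ahler form on a Hermitian surface; a standard manipulation then recasts $d^\ast(e^{-f}H)=0$ as
\[
d\theta_I - df\wedge\theta_I = 0.
\]
Since $d\theta_I$ is primitive on a pluriclosed surface, wedging with $\gw_I$ gives $df\wedge\theta_I\wedge\gw_I = 0$. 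Via the 4D identity $\alpha\wedge\gw_I = -\ast(I\alpha)$ for a 1-form $\alpha$, this is equivalent to the pointwise orthogonality $g(\theta_I^\sharp, I\N f) = 0$. Running the identical argument for $J$ and using $\theta_J = -\theta_I$ yields $g(\theta_I^\sharp, J\N f) = 0$ as well.

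For the final step, a direct expansion using $V_I = \tfrac{1}{2}(\theta_I^\sharp - \N f)$, $X_I = I V_I$, $X_J = J V_J$, and $\theta_J^\sharp = -\theta_I^\sharp$ gives
\[
g\bigl((I+J)X_I, X_J\bigr) = g(X_I, V_J) - g(V_I, X_J),
\]
using $I^2 = -\Id$ on the first term and the $g$-isometry of $J$ on the second. Expanding each inner product and eliminating the self-pairings $g(I\theta_I^\sharp, \theta_I^\sharp)$ and $g(I\N f, \N f)$ via skew-symmetry of $I$ with respect to $g$, every surviving term is a scalar multiple of $g(\theta_I^\sharp, I\N f)$ or $g(\theta_I^\sharp, J\N f)$. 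Applying the orthogonalities from the previous paragraph then gives $F_+(X_I, X_J) = 0$. The vanishing of $F_-(X_I, X_J)$ is analogous, via the companion identity $F_- = \frac{1}{1-p}g((I-J)\,\cdot\,,\,\cdot\,)$ together with the sign flip $\theta_J = -\theta_I$.

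The main technical obstacle is the 4D reformulation $d^\ast(e^{-f}H)=0 \Leftrightarrow d\theta_I - df\wedge\theta_I = 0$. This rests on the specific Hermitian-surface form of $H$ in terms of $\theta_I$ and $\gw_I$ and on an explicit calculation of $d^\ast H$ and $\iota_{\N f}H$ using the Gauduchon property; the reduction fails in higher dimensions, so one has to be attentive to exactly where the dimension-four hypothesis enters. Once that step is secured, the remainder is a short symbolic computation.
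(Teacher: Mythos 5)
Your proposal is correct and follows essentially the same route as the paper's proof: reduce via $F_+ = \tfrac{1}{1+p}g(I+J)$ and $\theta_I=-\theta_J$ to the two orthogonalities $g(\theta_I^\sharp, I\N f)=g(\theta_I^\sharp, J\N f)=0$, which you extract (as the paper does) from the four-dimensional reformulation $d\theta - df\wedge\theta=0$ of $d^*(e^{-f}H)=0$ together with primitivity of $d\theta$ for a Gauduchon metric. The only difference is that you spell out the primitivity step (wedging with $\gw_I$ and using $\ga\wedge\gw_I=-\ast(I\ga)$) in slightly more detail than the paper does.
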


\subsection{The generalized K\"ahler \texorpdfstring{$A$}{A}-deformation of a K\"ahler toric structure} \label{ss:Atransform}

We recall fundamental aspects of toric generalized K\"ahler structures of symplectic-type following \cite{boulanger2019toric, wang2020toric}, emphasizing first the local aspects of the theory.  

\begin{defn}\label{d:Kahler-locally-toric}
Let $(\mathring{M}, F)$ be a symplectic $2n$-dimensional manifold endowed with an $F$-compatible K\"ahler metric $(g_K, J_K)$, 
and $n$ commuting Killing vector fields $\{ X_1, \ldots, X_n \}$. We let $\tor:={\rm span}_{\R} \{ X_1, \ldots, X_n \}$  be the corresponding Lie algebra. We say that the K\"ahler manifold $(\mathring{M}, F, g_K, J_K, \tor)$  is \emph{locally toric} if  at each point $p\in \mathring{M}$:
\begin{enumerate}
\item $\tor_p:={\rm span}_{\R} \{ X_1(p), \ldots, X_n(p) \} \subset T_p\mathring{M}$ is a $n$-dimensional distribution;
\item $F(X_i(p), X_j(p))=0$.
\end{enumerate}
\end{defn}

The above conditions and the integrability of $J_K$ give rise to the splitting of the tangent space
\[ T_p\mathring{M} = \tor_p \oplus J_K \tor_p\]
as the direct sum of two integrable $F$-Legendrian distributions, and to a local coordinate system $(t_j, y_i)$ associated to the commuting vector fields $\{ X_j, J_K X_i\}$. Furthermore, we can introduce (local) momentum coordinates by $F(X_j)= - d\mu_j$, so that we  also have angular/momentum  coordinates $\{t_j, \mu_i\}$ with respect to which
\begin{equation}\label{momentum-angle}
\begin{split}
& F  = \sum_{j=1}^n d\mu_i \wedge dt_j, \qquad
X_j   = \frac{\partial}{\partial t_j},  \\
& J_K = \sum_{i,j=1}^n \Big( {\bf G}^K_{ij}(\mu) d\mu_i \otimes \frac{\partial}{\partial t_j}- {\bf H}^K_{ij}(\mu) dt_i \otimes  \frac{\partial}{\partial \mu_j}\Big),
\end{split}
\end{equation}
where ${\bf G}^K(\mu):=({\bf H}^K(\mu))^{-1}$ is a symmetric positive definite matrix with smooth coefficients which satisfies the Hessian condition ${\bf G}^K_{ij,k}={\bf G}^{K}_{ik,j}$,  where $,k$ stands for the partial derivative with respect to  the variable $\mu_k$.    The K\"ahler metric is then
\begin{equation}\label{g_K}
g_K  = \sum_{i,j=1}^n \Big({\bf G}^K_{ij}(\mu) d\mu_i d\mu_j + {\bf H}^K_{ij}(\mu) dt_i dt_j\Big).
\end{equation}

Following \cite{boulanger2019toric}, we now describe the construction of $A$-deformations of the structure above.  Fix $A\in \Lambda^2 \tor$, and then define a new $\tor$-invariant almost complex structure $J$  on $\mathring{M}$ by
\begin{equation}\label{boulanger}
J:= \sum_{i,j=1}^n \Big( {\bf \Psi}_{ij}(\mu) d\mu_i \otimes \frac{\partial}{\partial t_j}- {\bf \Psi}^{ij}(\mu) dt_i \otimes  \frac{\partial}{\partial \mu_j}\Big),
\end{equation}
where
\[ {\bf \Psi}_{ij}(\mu):= ({\bf G}^K_{ij}(\mu) + A_{ij}), \qquad {\bf \Psi}^{ij}(\mu) = ({\bf \Psi}(\mu))^{-1}_{ij}.\]
By \cite[Theorem~6]{boulanger2019toric},  $J$ is integrable if and only if
\begin{equation*}
{\bf \Psi}_{ij, k} ={\bf  \Psi}_{ik,j}.
\end{equation*}
Furthermore, the symplectic form $F$ tames $J$ if and only if the the symmetric tensor
\begin{equation} \label{g}
\begin{split}
g (\cdot, \cdot):=  -F(J\cdot, \cdot)^{\rm sym}
   = \sum_{i,j=1}^m \Big( ({\bf \Psi}^{\rm s})_{ij}(\mu) d\mu_i d\mu_j + ({\bf \Psi}^{-1})^{\rm s}_{ij}(\mu) dt_idt_j\Big)
   \end{split}
\end{equation}
is positive definite on $\mathring{M}$.  This condition is independent of $A$, as can be seen from the following general identities for a nondegenerate matrix ${\bf \Psi}$ which follow readily from the polar decomposition (the superscripts denote the symmetric and antisymmetric components, and transpose):
\begin{equation}\label{linear-algebra}
\begin{split}
& {\bf \Psi} \big({\bf \Psi}^{-1}\big)^{\rm s} {\bf \Psi}^{\rm T} = {\bf \Psi}^{\rm s}, \, \, {\bf \Psi} \big({\bf \Psi}^{-1}\big)^{\rm a} {\bf \Psi}^{\rm T} = -{\bf \Psi}^{\rm a}; \\
& {\bf \Psi}^{\rm s}\big({\bf \Psi}^{-1}\big)^{\rm a} + {\bf \Psi}^{\rm a} \big({\bf \Psi}^{-1}\big)^{\rm s} =0, \, \,  {\bf \Psi}^{\rm s}\big({\bf \Psi}^{-1}\big)^{\rm s} + {\bf \Psi}^{\rm a} \big({\bf \Psi}^{-1}\big)^{\rm a} ={\rm Id}.
\end{split}
\end{equation}
The crucial point of this construction is that the almost-complex structure $I$ defined by
\begin{equation}\label{eq:IJconj}
F(JX, Y)= -F(X, IY)
\end{equation}
is also integrable, and the symmetric tensor $g$ defined above is $I$-invariant.  In the angular-momentum local coordinate system $(t, \mu)$ the almost complex structure $I$ is expressed
\begin{equation}\label{I-matrix}
I = \sum_{i,j=1}^n \Big( ({\bf \Psi}^{\rm T})_{ij}(\mu) d\mu_i \otimes \frac{\partial}{\partial t_j}- \big({\bf \Psi}^{\rm T}\big)^{ij}(\mu) dt_i \otimes  \frac{\partial}{\partial \mu_j}\Big).
\end{equation}
We furthermore define a useful associated two-form $\be$ on $\mathring{M}$ via 
\begin{equation}\label{b} 
\begin{split} \be(\cdot, \cdot) &:= - \big(F(J\cdot, \cdot)\big)^{\rm skew} \\ &=  - \tfrac{1}{2} \sum_{i,j=1}^n \Big(A_{ij} d\mu_i \wedge d\mu_j  + ({\bf \Psi}^{-1})^{\rm a}_{ij}(\mu) dt_i\wedge dt_j\Big) \\ &= - \Big(\sum_{i,j=1}^n A_{ij}d\mu_i \wedge d\mu_j \Big)^{(2,0) + (0,2)}_J.
\end{split}
\end{equation}
\noindent The fundamental result of Boulanger (\cite{boulanger2019toric} Theorem 4) is that the data $(F, g, I, J)$ defines a symplectic-type generalized K\"ahler.  We codify this construction with a definition:

\begin{defn}\label{d:diagonal}
The symplectic-type generalized K\"ahler structure $(F, g, I, J)$ obtained from a toric $F$-compatible K\"ahler structure $(g_K, J_K)$ via the above construction is an \emph{$A$-deformation} of $(g_K, J_K)$. 
\end{defn}

We recall the following characterization of $A$-deformations (\cite{boulanger2019toric}, Prop. 4).

\begin{prop}\label{p:GK-locally-toric} Suppose $(F, g, I, J)$ is a symplectic-type generalized K\"ahler
structure on $(\mathring{M}, F)$ which is locally toric in the sense that there are  Killing fields $\{ X_1, \ldots, X_n \}$ such that the distribution $\tor={\rm span}_{\R}\{ X_1, \ldots, X_n \}$ satisfies the conditions of Definition~\ref{d:Kahler-locally-toric}. Then $(g, I, J)$ is obtained by an $A$-deformation of a locally toric K\"ahler metric $(F, g_K, J_K, \tor)$ if and only if at each point $p\in \mathring{M}$ 
\[ I \tor_p = J\tor_p.\]
\end{prop}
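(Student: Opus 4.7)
The forward direction is immediate from the structure theory laid out above: if $(F,g,I,J)$ is an $A$-deformation of a locally toric K\"ahler structure, then the explicit formulas \eqref{boulanger} and \eqref{I-matrix} exhibit both $J(\partial/\partial t_k)$ and $I(\partial/\partial t_k)$ as linear combinations of $\{\partial/\partial \mu_j\}$; hence $I\tor_p = J\tor_p = \spn_{\R}\{\partial/\partial\mu_1,\dots,\partial/\partial\mu_n\}$ at each $p$.

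For the converse I plan to proceed in four steps. First, I would set up adapted local coordinates. Since $\tor$ is spanned by commuting Hamiltonian Killing fields satisfying $F|_{\tor\times\tor}=0$, the action-angle theorem yields local coordinates $(t_j,\mu_i)$ with $X_j=\partial/\partial t_j$ and $F=\sum_{j} d\mu_j \wedge dt_j$. Moreover, since the $X_j$ preserve $J$ and $J$ is integrable, a Nijenhuis computation shows $[JX_i,JX_j]=0$, so $J\tor$ is involutive; combined with the complementarity $\tor \cap J\tor = 0$ (a consequence of the taming $F(X,JX)>0$ and $F|_{\tor\times\tor}=0$), one can choose the Lagrangian transversal implicit in the action-angle construction so that, in addition, $J\tor = \spn\{\partial/\partial\mu_j\}$ locally.

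Second, I would derive the matrix forms of $J$ and $I$. The $\tor$-invariance of $J$ together with $J\tor=\spn\{\partial/\partial\mu_j\}$ forces $J$ to take the form \eqref{boulanger} for some $\mu$-dependent invertible matrix ${\bf \Psi}(\mu)$; the hypothesis $I\tor=J\tor$ then ensures that $I$ maps $\tor$ into the same coordinate subspace, and using the defining relation \eqref{eq:IJconj} to convert between $J$ and $I$, a short computation shows that $I$ automatically takes the form \eqref{I-matrix} with transpose ${\bf \Psi}^T$. Third, I would decompose ${\bf \Psi}$: Boulanger's integrability criterion (\cite[Thm.~6]{boulanger2019toric}) gives ${\bf \Psi}_{ij,k}={\bf \Psi}_{ik,j}$; splitting ${\bf \Psi}={\bf \Psi}^{\rm s}+{\bf \Psi}^{\rm a}$ and applying the antisymmetry of ${\bf \Psi}^{\rm a}$ in $(i,j)$ to this Hessian identity in a short chain yields ${\bf \Psi}^{\rm a}_{ij,k}=0$. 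Thus $A:={\bf \Psi}^{\rm a}$ is a constant antisymmetric matrix, and ${\bf G}^K:={\bf \Psi}^{\rm s}$ is a symmetric tensor satisfying the Hessian symmetry, which by \eqref{linear-algebra} and positive-definiteness of $g$ is itself positive definite. Defining $(g_K,J_K)$ via \eqref{momentum-angle} and \eqref{g_K} then yields a locally toric K\"ahler structure of which $(F,g,I,J)$ is by construction the $A$-deformation.

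The main obstacle I anticipate is the coordinate set-up in Step 1: beyond the standard action-angle theorem, one must verify that the implicit Lagrangian transversal to $\tor$ can simultaneously be chosen tangent to the complementary distribution $J\tor$, so that the coordinate vector fields $\partial/\partial\mu_j$ span $J\tor$. This compatibility is the one place where $J$ (rather than just $F$ and $\tor$) enters the coordinate choice; it relies on the involutivity of $J\tor$ and on the complementarity $\tor \cap J\tor = 0$. Once this geometric input is secured, the remaining steps are essentially algebraic, following from the matrix identities \eqref{linear-algebra} and Boulanger's integrability criterion.
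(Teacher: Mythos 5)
The paper does not actually prove this proposition; it quotes it from \cite{boulanger2019toric} (Prop.~4), so your from-scratch argument has to stand on its own. The forward direction is fine, and the architecture of your converse (adapted action--angle coordinates with $\partial/\partial\mu_j$ spanning $J\tor$, reading off ${\bf \Psi}$, then showing its antisymmetric part is constant) is the right route. One omission in Step 1: besides involutivity and transversality of $J\tor$ you must check that $J\tor$ is $F$-\emph{isotropic}, since in any chart with $F=\sum_j d\mu_j\wedge dt_j$ the distribution $\spn\{\partial/\partial\mu_j\}$ is automatically Lagrangian. This is in fact precisely where the hypothesis $I\tor_p=J\tor_p$ enters the coordinate normalization: by \eqref{eq:IJconj}, $F(JX_i,JX_j)=-F(X_i,IJX_j)$, and $IJX_j\in I(J\tor)=I(I\tor)=\tor$, so this vanishes by $F|_{\tor\times\tor}=0$. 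Without $I\tor=J\tor$ the transversal cannot be taken to be $J\tor$, so this should be made explicit.

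The genuine gap is in Step 3. The Hessian identity ${\bf \Psi}_{ij,k}={\bf \Psi}_{ik,j}$ from the integrability of $J$, combined with the antisymmetry of ${\bf \Psi}^{\rm a}$, does \emph{not} yield ${\bf \Psi}^{\rm a}_{ij,k}=0$: the identity constrains only the last two indices of $P_{ijk}:={\bf \Psi}_{ij,k}$ and says nothing about the pair $(i,j)$, so no such ``short chain'' exists. Concretely, for $n=2$ the matrix ${\bf \Psi}=\begin{pmatrix} C & 0\\ \mu_1 & C\end{pmatrix}$ satisfies ${\bf \Psi}_{ij,k}={\bf \Psi}_{ik,j}$ for all $i,j,k$ (the only nonzero derivative is ${\bf \Psi}_{21,1}$, for which the condition is vacuous), yet ${\bf \Psi}^{\rm a}=\tfrac{\mu_1}{2}\left(\begin{smallmatrix}0&-1\\1&0\end{smallmatrix}\right)$ is not constant. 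What closes the argument is the integrability of $I$, which you never invoke: since $I$ takes the form \eqref{I-matrix} with matrix ${\bf \Psi}^{\rm T}$, Boulanger's criterion applied to $I$ gives in addition $({\bf \Psi}^{\rm T})_{ij,k}=({\bf \Psi}^{\rm T})_{ik,j}$, i.e.\ ${\bf \Psi}_{ji,k}={\bf \Psi}_{ki,j}$. The two identities say that $P_{ijk}$ is invariant under the transpositions $(23)$ and $(13)$ of its indices, hence totally symmetric; in particular $P_{ijk}=P_{jik}$, so ${\bf \Psi}^{\rm a}$ is constant and ${\bf \Psi}^{\rm s}={\bf G}^K$ inherits the Hessian condition needed for $J_K$ to be integrable. (In the counterexample above $I$ is indeed non-integrable, consistent with this.) With that repair, together with the positivity of ${\bf \Psi}^{\rm s}$ from \eqref{linear-algebra} as you note, the converse goes through.
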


We close this section with some further fundamental points and computations used below.  First notice that the fundamental forms $\omega_I := gI, \omega_J := gJ$ are written in the coordinate system $(t, \mu)$ as
\begin{equation}\label{omega}
\begin{split}
\omega_J &= \sum_{i,j=1}^m\Big({\bf \Psi}^{\rm s}\big({\bf \Psi}^{\rm T}\big)^{-1}\Big)_{ij}(\mu)d\mu_i\wedge dt_j, \quad  \omega_I = \sum_{i,j=1}^m\Big({\bf \Psi}^{\rm s}{\bf \Psi}^{-1}\Big)_{ij}(\mu)d\mu_i\wedge dt_j.
\end{split}
\end{equation}
Furthermore, the corresponding holomorphic Poisson structure~\cite{HitchinPoisson} $\sigma_J:= \pi_{\Lambda^{2,0}_J} \gs$ is expressed (cf. \cite{ASU2} Proposition 3.9)
\begin{gather} \label{J-poisson}
\sigma_J=  -2\sum_{i,j=1}^m A_{ij} \big(X_i -\sqrt{-1} JX_i\big) \wedge \big(X_j - \sqrt{-1}JX_j\big),
\end{gather}
which provides a geometric interpretation of $A$.  In particular this shows that the Poisson tensor $\gs$ is nondegenerate if and only if the matrix $A$ is nondegenerate.  Furthermore, observe that equation \eqref{boulanger} does not depend on the choice of local angular/momentum
coordinates, as we can equally write for any $\xi \in J_K\tor_p$, 
\[ J (\xi) = J_K(\xi) +AF (\xi)  \qquad I(\xi) = J_K(\xi) - A F (\xi),   \]
where $A= \sum_{i,j=1}^n A_{ij} X_i \wedge X_j$ is viewed as a real Poisson tensor on $\mathring{M}$,  and $J_K, I, J$, $AF$ are linear maps which interchange $J_K \tor_p$ and $\tor_p$.  Finally we give an explicit relationship between the Riemannian metrics underlying an $A$-deformation.

\begin{lemma} \label{l:Atypemetriccomparison}
Given the setup above, on $\mathring{M}$ one has
\begin{equation}\label{extension}
\begin{split}
g- g_K =&\ -g\left(A\frac{(I+ J)}{2}d\mu, A\frac{(I+ J)}{2}d\mu\right),
\end{split}
\end{equation}
and
\begin{equation}\label{extension-K}
\begin{split}
g-\be-g_K &=\left\langle d\mu,  A, d\mu \right\rangle - \left\langle J_K d\mu, A\left({\rm Id} +\left({\bf \Psi}^{s}\right)^{-1}A\right)^{-1},  J_K d\mu \right\rangle.
\end{split}    
\end{equation}

\begin{proof} 
First observe the consequence of the second line in \eqref{linear-algebra}
\[ 
\begin{split} 
{\bf \Psi}^{s} \left({\bf \Psi}^{-1}\right)^s {\bf \Psi}^{s} &= 
\left({\rm Id} - A({\bf \Psi}^{-1})^{a}\right){\bf \Psi}^s ={\bf \Psi}^{s}-A({\bf \Psi}^{-1})^{a}{\bf \Psi}^{s} = {\bf \Psi}^{s} + A({\bf \Psi}^{-1})^s A.
\end{split}\]
Using this and the expressions for $J$ and $I$ from \eqref{boulanger} and \eqref{I-matrix} we obtain
\begin{equation*}
\begin{split}
g- g_K &= \langle dt, \left({\bf \Psi}^{-1}\right)^{s} -\left({\bf \Psi}^s\right)^{-1}, dt \rangle \\
&= \left\langle \frac{(I+ J)}{2}d\mu, {\bf \Psi}^s\left(\left({\bf \Psi}^{-1}\right)^{s} -\left({\bf \Psi}^s\right)^{-1}\right) {\bf \Psi}^{s}, \frac{(I+ J)}{2}d\mu \right\rangle \\
&= \left\langle \frac{(I+ J)}{2}d\mu, A\left ({\bf \Psi}^{-1}\right)^{s}A, \frac{(I+ J)}{2}d\mu \right\rangle  \\
&= \sum_{i,j=1}^n \Big(A({\bf \Psi}^{-1})^s A\Big)_{ij}\left(\frac{I+J}{2}\right) d\mu_i\left(\frac{I+J}{2}\right)d\mu_j \\
&= -g\left(A\frac{(I+ J)}{2}d\mu, A\frac{(I+ J)}{2}d\mu\right),
\end{split}
\end{equation*}
giving (\ref{extension}).  Taking into account equation \eqref{b}, we compute
\begin{equation*}
\begin{split}
g-\be-g_K &=  \left\langle d\mu,  A, d\mu \right\rangle +\left\langle J_K d\mu, {\bf \Psi}^s\left({\bf \Psi}^{-1}-\left({\bf \Psi}^s\right)^{-1}\right) {\bf \Psi}^{s},  J_K d\mu \right\rangle   \\
&=\left\langle d\mu,  A, d\mu \right\rangle - \left\langle J_K d\mu, A\left({\rm Id} +\left({\bf \Psi}^{s}\right)^{-1}A\right)^{-1},  J_K d\mu \right\rangle,
\end{split}    
\end{equation*}
giving (\ref{extension-K}).
\end{proof}

\end{lemma}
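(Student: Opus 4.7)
The proof is essentially a matrix computation in the local angular-momentum coordinate system $(t_i, \mu_j)$, whose key algebraic input is the preliminary identity
$${\bf \Psi}^{\rm s}({\bf \Psi}^{-1})^{\rm s}{\bf \Psi}^{\rm s} = {\bf \Psi}^{\rm s} + A({\bf \Psi}^{-1})^{\rm s}A.$$
This follows directly from \eqref{linear-algebra}: the first equation of the second line gives ${\bf \Psi}^{\rm s}({\bf \Psi}^{-1})^{\rm a} = -A({\bf \Psi}^{-1})^{\rm s}$, and then multiplying the second equation of that line on the right by ${\bf \Psi}^{\rm s}$ and substituting yields the claim.

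For \eqref{extension}, I would observe from \eqref{g} and \eqref{g_K} that $g$ and $g_K$ coincide on the $d\mu d\mu$-block (both equal ${\bf \Psi}^{\rm s} = {\bf G}^K$), so their difference reduces to the $dt dt$-block $\langle dt, ({\bf \Psi}^{-1})^{\rm s} - ({\bf \Psi}^{\rm s})^{-1}, dt\rangle$. Conjugating the inner matrix by ${\bf \Psi}^{\rm s}$ and applying the preliminary identity rewrites $g - g_K$ as $\langle \tfrac{(I+J)}{2}d\mu, A({\bf \Psi}^{-1})^{\rm s}A, \tfrac{(I+J)}{2}d\mu\rangle$. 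Using the coordinate formulas \eqref{boulanger} and \eqref{I-matrix} for $I$ and $J$ acting on one-forms, together with the fact that $g$ pairs vectors in the $\tor$-directions via $({\bf \Psi}^{-1})^{\rm s}$, one recognizes this as $-g(A\tfrac{(I+J)}{2}d\mu, A\tfrac{(I+J)}{2}d\mu)$, the sign arising from $A^{\rm T} = -A$.

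For \eqref{extension-K}, I would incorporate $\beta$ using \eqref{b}. The $d\mu\wedge d\mu$-part of $-\beta$ yields $\langle d\mu, A, d\mu\rangle$ directly, while the $dt\wedge dt$-part of $-\beta$ contributes $({\bf \Psi}^{-1})^{\rm a}$ to the $dt$-block of $g - g_K$, so that the total $dt$-block of $g - \beta - g_K$ becomes ${\bf \Psi}^{-1} - ({\bf \Psi}^{\rm s})^{-1}$ (combining its symmetric and antisymmetric parts). Using the factorization ${\bf \Psi} = {\bf \Psi}^{\rm s}(\Id + ({\bf \Psi}^{\rm s})^{-1}A)$ together with the elementary identity $(\Id + X)^{-1} - \Id = -X(\Id + X)^{-1}$, one finds ${\bf \Psi}^{\rm s}({\bf \Psi}^{-1} - ({\bf \Psi}^{\rm s})^{-1}){\bf \Psi}^{\rm s} = -A(\Id + ({\bf \Psi}^{\rm s})^{-1}A)^{-1}$. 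Since $J_K$ sends $d\mu$ to the $dt$-direction with multiplier ${\bf \Psi}^{\rm s} = {\bf G}^K$ (as opposed to $I$ and $J$, which involve $({\bf \Psi}^{-1})^{\rm s}$), converting this $dt$-block into a quadratic form in $J_K d\mu$ produces the second term on the right of \eqref{extension-K}.

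The main obstacle is bookkeeping: careful tracking of signs, symmetric versus skew parts, and the precise dual action of $I$, $J$, and $J_K$ on 1-forms is needed to make each conversion between the $dt$- and $d\mu$-bases. No new geometric input is required beyond the coordinate-level definitions \eqref{g}--\eqref{b} and the four linear-algebra identities in \eqref{linear-algebra}.
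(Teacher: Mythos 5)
Your proposal is correct and follows essentially the same route as the paper's proof: the same preliminary identity ${\bf \Psi}^{\rm s}({\bf \Psi}^{-1})^{\rm s}{\bf \Psi}^{\rm s}={\bf \Psi}^{\rm s}+A({\bf \Psi}^{-1})^{\rm s}A$ derived from \eqref{linear-algebra}, the same block decomposition in angular/momentum coordinates (the $d\mu$-blocks of $g$ and $g_K$ cancel, leaving the $dt$-block $({\bf \Psi}^{-1})^{\rm s}-({\bf \Psi}^{\rm s})^{-1}$, resp.\ ${\bf \Psi}^{-1}-({\bf \Psi}^{\rm s})^{-1}$ after subtracting $\be$), and the same conjugation by ${\bf \Psi}^{\rm s}$ to convert to the $\frac{(I+J)}{2}d\mu$ and $J_K d\mu$ expressions. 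The only differences are presentational, such as your explicit use of the factorization ${\bf \Psi}={\bf \Psi}^{\rm s}\left({\rm Id}+({\bf \Psi}^{\rm s})^{-1}A\right)$ where the paper simply states the resulting identity.
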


\begin{rmk} Notice that tensor on the right hand side of (\ref{extension}) is understood as follows: if $v, w \in T_pM$, then $\left(\frac{(I+ J)}{2} d\mu\right)_p(v)$  and $\left(\frac{(I+ J)}{2} d\mu\right)_p(w)$ are viewed as elements of $\tor^*$ on which $A$ acts to produce elements of $\tor$, and hence fundamental vector fields $X_v, X_{w}$ on $M$. Then
\[ g\left(A\frac{(I+ J)}{2}d\mu, A\frac{(I+ J)}{2}d\mu\right)_p(v, w):= g_p(X_v(p), X_{w}(p)).\]  Similar remarks apply to the right hand side of (\ref{extension-K}).
\end{rmk}

\noindent Finally we record a technical lemma classifying pluriharmonic functions on the moment image.
\begin{lemma}\label{pluriharmonic} If $(F, g, I, J)$ is an $A$-deformation of a locally toric K\"ahler structure with ${\bf G}^K(\mu)= {\rm Hess}(u)$ and $A\in \wedge^2 \tor$, then the functions  $u_j, \, j=1, \ldots, n$ are $J_K$-pluriharmonic,  the functions $u_{, j} + \sum_{k=1}^n A_{kj}\mu_k, \, j=1, \ldots, n$ are $J$-pluriharmonic, and the functions $u_{, j} - \sum_{k=1}^n A_{kj}\mu_k, \, j=1, \ldots, n$ are $I$-pluriharmonic.  Moreover, these are the only $\tor$-invariant pluriharmonic functions on the respective complex manifolds.
\end{lemma}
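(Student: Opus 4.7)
My plan is to verify each pluriharmonicity claim by a direct computation in the angular/momentum coordinates $(t,\mu)$, and then derive uniqueness by reducing the $\tor$-invariant pluriharmonicity condition to a linear algebraic system whose solution space is exactly the one described.

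Pluriharmonicity of $\phi$ with respect to a complex structure $K$ is equivalent to $dK^*d\phi=0$, where $(K^*\alpha)(X):=\alpha(KX)$. Reading off the three complex structures from \eqref{momentum-angle}, \eqref{boulanger}, and \eqref{I-matrix} one obtains
\begin{align*}
J_K^*d\mu_k=-\sum_i({\bf H}^K)_{ik}\,dt_i,\quad J^*d\mu_k=-\sum_i({\bf\Psi}^{-1})_{ik}\,dt_i,\quad I^*d\mu_k=-\sum_i({\bf\Psi}^{-1})_{ki}\,dt_i.
\end{align*}
For each candidate I would then compute $d\phi$ in the $d\mu$-basis, using the symmetry ${\bf G}^K_{ij}=u_{,ij}$ and antisymmetry of $A$. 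In the $J$-case,
\begin{align*}
d\Big(u_{,j}+\sum_k A_{kj}\mu_k\Big)=\sum_k(u_{,jk}+A_{kj})d\mu_k=\sum_k{\bf\Psi}_{kj}\,d\mu_k,
\end{align*}
and similarly $d(u_{,j})=\sum_k{\bf G}^K_{kj}d\mu_k$ and $d(u_{,j}-\sum_k A_{kj}\mu_k)=\sum_k{\bf\Psi}_{jk}d\mu_k$. Applying the appropriate $K^*$ to each and contracting via ${\bf G}^K{\bf H}^K={\bf\Psi}^{-1}{\bf\Psi}={\bf\Psi}{\bf\Psi}^{-1}=\Id$ yields in every case $K^*d\phi=-dt_j$, which is closed.

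For uniqueness I would argue the $J$-case in detail, the others being entirely parallel. For $\tor$-invariant $\phi=\phi(\mu)$ one has
\begin{align*}
J^*d\phi=-\sum_{k,i}\phi_{,k}({\bf\Psi}^{-1})_{ik}\,dt_i,
\end{align*}
and $\tor$-invariance makes $dJ^*d\phi$ a sum of $d\mu_j\wedge dt_i$ terms. Closedness therefore forces each coefficient $\sum_k({\bf\Psi}^{-1})_{ik}\phi_{,k}$ to be a constant $c_i$ in $\mu$. Solving the resulting linear system for $\nabla\phi$ yields $\phi_{,k}=\sum_i c_i{\bf\Psi}_{ki}=\sum_i c_i(u_{,ki}+A_{ki})$, and integration in $\mu$ produces
\begin{align*}
\phi=\sum_i c_i\Big(u_{,i}+\sum_k A_{ki}\mu_k\Big)+\mathrm{const},
\end{align*}
exhibiting $\phi$ as a real linear combination of the claimed functions. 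The $I$ and $J_K$ uniqueness proofs replace ${\bf\Psi}$ by ${\bf\Psi}^T$ and ${\bf G}^K$ respectively.

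The main bookkeeping subtlety, and the only real obstacle, is keeping the transposition/inversion conventions straight for the three matrices ${\bf G}^K$, ${\bf\Psi}$, ${\bf\Psi}^T$ that express $J_K$, $J$, and $I$; once these are aligned, the remainder is a direct application of matrix inversion.
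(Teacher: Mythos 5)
Your proposal is correct and follows essentially the same route as the paper: a direct computation in the angular/momentum coordinates using the matrix expressions \eqref{momentum-angle}, \eqref{boulanger}, \eqref{I-matrix} for the three complex structures (the paper defers this verification to a citation), together with the observation that for a $\tor$-invariant function closedness of $K^*d\phi$ forces a constant-coefficient linear relation on $\nabla\phi$. The paper phrases the uniqueness step equivalently by passing to the pluriharmonic coordinates $y=\nabla u$ and concluding ${\rm Hess}_y(\varphi)=0$, which is the same linear system you solve directly in the $\mu$-variables.
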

\begin{proof} The fact that the relevant functions are pluriharmonic can be directly computed (cf. \cite{ASU2} (3.12)).  To see that these are the only such, fix for instance $\varphi$, a $\tor$-invariant pluriharmonic function with respect to $J_K$. As it is $\tor$-invariant, $\varphi = \varphi(y)$, with $y = \N u$.  Since $y$ is $J_K$-pluriharmonic, we have
\begin{align*}
    0 = d d^c_{J_K} \varphi = \Hess(\varphi)_{ij} d y_i \wedge d t_j,
\end{align*}
hence $\Hess(\varphi) = 0$ and the claim follows.
\end{proof}

\subsection{Local equivalence of K\"ahler and generalized K\"ahler-Ricci solitons}

Here we establish the local equivalence between toric K\"ahler-Ricci solitons and toric symplectic-type generalized K\"ahler-Ricci solitons through the $A$-deformation.  The general setup through this subsection is to fix a local toric K\"ahler structure $(F, g_K, J_K)$ compatible with $F$ and an element $A\in \Lambda^2 \tor$, and then relate certain curvature properties of $(g_K, J_K)$ to the corresponding curvature properties of the generalized K\"ahler $A$-deformation $(F, g, I, J)$. Our considerations are local, so we can represent the K\"ahler data $(g_K, J_K)$ with a strictly convex smooth function $u(\mu)$,  unique up to an affine linear additive function in momenta, so that ${\bf G}^K(\mu) = {\rm Hess}(u)$ in \eqref{momentum-angle}.

We say that the locally toric K\"ahler metric $(g_K, J_K)$ is a \emph{locally toric KRS} if
\eqref{d:KRS} holds for some $\tor$-invariant function $f_K$.  It follows that $f_K = - \IP{b, \mu}$ for some $b \in \tor$.  Furthermore, there is a reduction to a drift Monge-Amp\`ere equation:
\begin{lemma}\label{l:KRS} (\cite{apostolov2023hamiltonian} Lemma 2.10, \cite{Donaldson-survey} Eq. (11)) ${\bf G}^K={\rm Hess}(u)$ defines a KRS  $(F, g_K, J_K)$ on $\mathring{M}$ with potential function $f_K = -\IP{b, \mu}, \, \, b \in \tor$ if and only if there is a constant $c\in \tor^*$ such that
\begin{gather} \label{f:KRS}
\langle b , x \rangle = \langle c, \nabla u\rangle + \log \det {\rm Hess}(u) + const.
\end{gather}
\end{lemma}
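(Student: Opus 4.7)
The plan is to reduce the real Ricci-soliton equation $\Rc_{g_K}+\nabla^2 f_K=0$ to a $dd^c_{J_K}$-closed equation on a single $\tor$-invariant function, and then invoke Lemma~\ref{pluriharmonic} to classify the result.

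First I would verify that the prescribed potential $f_K=-\langle b,\mu\rangle$ with $b\in\tor$ has real-holomorphic $g_K$-gradient. Since the $\mu_j$ are momentum maps of the commuting Killing fields $X_j$ with respect to $F$, one computes $\nabla f_K=-\sum_j b^j J_K X_j$, so that $-\tfrac{1}{2}J_K\nabla f_K=\tfrac{1}{2}\sum_j b^j X_j\in\tor$ is both Killing and $J_K$-holomorphic. In particular the $(2,0)+(0,2)$ part of the real soliton equation is automatic, and the $(1,1)$ part reduces to
\begin{equation*}
\rho_K+\tfrac{1}{2}dd^c_{J_K}f_K=0.
\end{equation*}

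Second, I would use the standard toric formula $\rho_K=\tfrac{1}{2}dd^c_{J_K}\log\det\Hess(u)$, which follows by passing to logarithmic holomorphic coordinates $z=y+\sqrt{-1}\,t$ via the Legendre transform $y=\nabla u$ and computing that the K\"ahler volume form in these coordinates is proportional to $\det\Hess(u)^{-1}$. Substituting into the previous display yields
\begin{equation*}
dd^c_{J_K}\bigl(\log\det\Hess(u)-\langle b,\mu\rangle\bigr)=0,
\end{equation*}
so that the $\tor$-invariant function $\log\det\Hess(u)-\langle b,\mu\rangle$ is $J_K$-pluriharmonic on $\mathring{M}$.

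Finally, by Lemma~\ref{pluriharmonic} every such function is of the form $-\langle c,\nabla u\rangle+\mathrm{const}$ for some $c\in\tor^*$, which after rearrangement is exactly the asserted identity (with the variable $x$ interpreted as the momentum coordinate $\mu$). The converse direction is obtained by reading the chain of implications backwards: the identity makes $\log\det\Hess(u)-\langle b,\mu\rangle$ pluriharmonic, hence the Ricci form equals $-\tfrac{1}{2}dd^c_{J_K} f_K$, and combining this with the $J_K$-holomorphicity of $\nabla f_K$ recovers the full real soliton equation. The main obstacle is essentially bookkeeping: tracking signs between $dd^c_{J_K}$, $i\partial\bar\partial$, and the toric Ricci formula, since these fix the precise placement of $b$ and $c$ in the final equation; once those conventions are pinned down the derivation is routine.
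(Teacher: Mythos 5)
Your proposal is correct and follows essentially the same route as the paper: reduce the soliton equation to $\rho_K = -\tfrac{1}{2}dd^c_{J_K}f_K$ using that $J_K\nabla f_K$ is Killing, invoke the standard toric formula $\rho_K = \tfrac{1}{2}dd^c_{J_K}\log\det\Hess(u)$, and classify the resulting $\tor$-invariant pluriharmonic function via Lemma~\ref{pluriharmonic}. The only difference is that you spell out the real-holomorphicity of $\nabla f_K$ and sketch a derivation of the Ricci-form formula, which the paper simply cites.
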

\begin{proof} Let $b\in \tor$ and $f_K=-\IP{b, \mu}$. Recall that (see e.g. \cite{abreu2012scalar,apostolov-notes}) the Ricci form of $(g_K, J_K)$  is given by 
\[ \rho_{g_K} = \frac{1}{2} dd^c_{J_K} \log \det {\rm Hess}(u).\]
Therefore,  as $J_K \nabla f_K$ is Killing, the KRS equation \eqref{d:KRS} becomes
$\rho_K = -\frac{1}{2} dd^c_{J_K} f_K$, which in turn is equivalent to $\log \det {\rm Hess}(u) - \langle \mu, b \rangle$ being a $\tor$-invariant $J_K$-pluriclosed function. Any such function is of the form  $-\langle c, \nabla u \rangle + const$ by Lemma~\ref{pluriharmonic}. \end{proof}

\begin{rmk} The real additive constant  in \eqref{f:KRS} can be set to be zero (as in \cite{apostolov2023hamiltonian}) by choosing a basis of $\tor$ (and hence $\tor^*$) for the definition of $\det\Hess(u)$. 
\end{rmk}

\noindent To make the connection with generalized K\"ahler-Ricci solitons, we first record a local expression for the Bismut-Ricci forms of an $A$-deformation of a toric K\"ahler structure.

\begin{lemma}\label{Bismut-Ricci} (\cite{ASU2} Lemma 3.10) Let $(F,g, I,J)$ be an $A$-deformation of a locally toric K\"ahler structure $(g_K, J_K)$, where ${\bf G}^K= {\rm Hess}(u)$.  Then the Bismut-Ricci forms $\rho_J$ and $\rho_I$ are given by 
\begin{equation}\label{e:Ricci-Bismut} 
 \rho_J = \tfrac{1}{2} d d^c_I \log \det {\rm Hess}(u), \qquad \rho_I = \tfrac{1}{2}dd^c_J  \log \det {\rm Hess}(u). 
\end{equation}                 
\end{lemma}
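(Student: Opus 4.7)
My approach is to reduce the statement to a computation of the Ricci potential via Proposition~\ref{p:transgression}. That Proposition identifies $\rho_I = -\tfrac{1}{2} dJd\Phi = \tfrac{1}{2} dd^c_J \Phi$ and $\rho_J = -\tfrac{1}{2} dId\Phi = \tfrac{1}{2} dd^c_I \Phi$, where $\Phi = \Psi_1 - \Psi_2$ is the Ricci potential computed from the two closed pure spinors of the GK structure. Thus to prove the lemma it suffices to establish the identity
\[
\Phi = \log\det\Hess(u) + h
\]
for some function $h$ that is pluriharmonic with respect to \emph{both} $I$ and $J$, since $dd^c_I h = dd^c_J h = 0$ then yields the two stated formulas after reading off $dd^c_I \Phi$ and $dd^c_J \Phi$.

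To carry out this computation I would work in the local action-angle coordinates $(t,\mu)$ of \eqref{momentum-angle}, where the Riemannian volume is $dV_g = (\det{\bf\Psi}^s/\det{\bf\Psi})\, d\mu\wedge dt$ and the symplectic volume is $F^n/n! = d\mu\wedge dt$. The two closed pure spinors of the symplectic-type GK structure $(F, g, I, J)$ can be identified explicitly: one is proportional (after a $B$-field shift) to $e^{iF}$, producing the symplectic volume under the Mukai pairing; the other is a local $(n,0)$-form $dz_1\wedge\cdots\wedge dz_n$ in $J$-holomorphic coordinates twisted by $e^{i\beta}$, with $\beta$ the 2-form of \eqref{b}. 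Passing from $(t,\mu)$ to $J$-holomorphic coordinates introduces the Jacobian $\det\Hess(u)$ arising from the Legendre transform of $u$, so the corresponding Mukai pairing divided by $dV_g$ yields $\Psi_1 = -\log\det\Hess(u) + (\text{$\tor$-invariant terms linear in }\mu)$. Combining with the analogous computation for $\Psi_2$, one obtains $\Phi = \log\det\Hess(u) + h$ with $h$ being $\tor$-invariant and affine in $\mu$, and therefore pluriharmonic with respect to both $I$ and $J$ by Lemma~\ref{pluriharmonic}.

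The main obstacle is the precise identification of the non-symplectic-type pure spinor and the tracking of the $\beta$-twist through the Mukai pairing computation; correctly accounting for this twist is essential to ensure that the $A$-dependence of $\Phi$ lies entirely within the space of $I$- and $J$-pluriharmonic functions classified in Lemma~\ref{pluriharmonic}. A useful sanity check is the K\"ahler case $A = 0$, where $I = J = J_K$ and the K\"ahler Ricci form identity $\rho_{g_K} = \tfrac{1}{2}dd^c_{J_K}\log\det\Hess(u)$ (recalled in the proof of Lemma~\ref{l:KRS}) together with Proposition~\ref{p:transgression} forces $\Phi - \log\det\Hess(u)$ to be $J_K$-pluriharmonic and hence affine in $\mu$, providing an independent confirmation of the normalization and orientation of signs before turning on the $A$-deformation.
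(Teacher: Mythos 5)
The paper itself does not prove this lemma: it is imported verbatim from \cite{ASU2} (Lemma 3.10), where it is obtained by a direct computation of the Bismut--Ricci forms in the explicit $J$- and $I$-holomorphic coordinates $\nabla u + A\mu + \sqrt{-1}\,t$ and $\nabla u - A\mu + \sqrt{-1}\,t$. Your alternative strategy --- reduce to identifying the Ricci potential $\Phi$ and then apply Proposition~\ref{p:transgression} --- is legitimate in principle, but as written it has two genuine gaps. The first is that the entire analytic content, namely the identification of the second closed pure spinor and the evaluation of the Mukai pairings giving $\Psi_1$ and $\Psi_2$, is only sketched; you yourself flag it as ``the main obstacle,'' and nothing in the write-up actually establishes the claimed form of $\Phi$.

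The second gap is logical and would remain even if that computation were supplied: you conclude by asserting that a $\tor$-invariant function which is affine in $\mu$ is pluriharmonic for both $I$ and $J$ ``by Lemma~\ref{pluriharmonic},'' but that lemma says essentially the opposite. The only $\tor$-invariant $J$-pluriharmonic (resp.\ $I$-pluriharmonic) functions are constants and linear combinations of $u_{,j}+\sum_k A_{kj}\mu_k$ (resp.\ $u_{,j}-\sum_k A_{kj}\mu_k$); a nonconstant affine function of $\mu$ is of neither form for generic $u$, since $\IP{c,\nabla u}$ is affine in $\mu$ only when $\Hess(u)\,c$ is constant. So for your reduction to close you must show that the residual term $h$ is \emph{constant}, not merely affine. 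That this is what actually happens (on the locus where $\gs$ is invertible) follows by combining Proposition~\ref{p:transgression}, which gives $\gs\, d\Phi=\theta_I^{\sharp}-\theta_J^{\sharp}$, with Lemma~\ref{Lee-forms}, which gives $\theta_I-\theta_J=\tfrac{1}{2}\,d\log\det\Hess(u)\circ[I,J]$; together these yield $\gs\, d\bigl(\Phi+\log\det\Hess(u)\bigr)=0$ and hence $\Phi=-\log\det\Hess(u)+const$. Note this also shows that, in the paper's sign conventions, the coefficient of $\log\det\Hess(u)$ in $\Phi$ is $-1$ rather than the $+1$ in your ansatz; your flipped $d^c$ convention may compensate in the final formula, but both signs need to be pinned down, and in any case the pluriharmonicity of the remainder must be replaced by its constancy for the argument to be valid.
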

\noindent We derive below a key formula for the Lee forms:
\begin{lemma}\label{Lee-forms} Let $(F,g, I,J)$ be an $A$-deformation of a locally toric K\"ahler structure $(g_K, J_K)$, where ${\bf G}^K= {\rm Hess}(u)$.  Then, the Lee forms $\theta_I$ and $\theta_J$ of the Hermitian structures $(g, I)$ and $(g, J)$ are given by
\[  
\begin{split}
\theta_J =&\ \tfrac{1}{2}d\log \det {\rm Hess}(u) -d \log \det {\bf \Psi}   - \tfrac{1}{2}  d\log \det {\rm Hess}(u) \circ IJ,  \\
\theta_I =&\ \tfrac{1}{2}d\log \det {\rm Hess}(u)  -d \log \det {\bf \Psi}  - \tfrac{1}{2} d\log \det {\rm Hess}(u) \circ JI.
\end{split}\]
\end{lemma}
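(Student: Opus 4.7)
The approach is to derive the Lee forms $\theta_I, \theta_J$ from the partial Ricci potentials $\Psi_1, \Psi_2$ using the transgression identities of Proposition~\ref{p:transgression}. Adding and subtracting the two equations $(I-J)d\Psi_1 = I\theta_I - J\theta_J$ and $(I+J)d\Psi_2 = I\theta_I + J\theta_J$, then applying $-I$ and $-J$ respectively (using $I^2 = J^2 = -\Id$), one obtains
\begin{equation*}
2\theta_I = d(\Psi_1+\Psi_2) + IJ\,d\Phi, \qquad 2\theta_J = d(\Psi_1+\Psi_2) + JI\,d\Phi,
\end{equation*}
where $\Phi = \Psi_1-\Psi_2$. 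Thus it suffices to determine $\Phi$ and $\Psi_1+\Psi_2$ explicitly.

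For $\Phi$, matching $\rho_J = -\tfrac{1}{2}\,dId\Phi$ from Proposition~\ref{p:transgression} with $\rho_J = \tfrac{1}{2}\,dd^c_I\log\det\Hess(u)$ from Lemma~\ref{Bismut-Ricci} shows that $\Phi - \log\det\Hess(u)$ is $I$-pluriharmonic; the analogous argument for $\rho_I$ shows that it is also $J$-pluriharmonic. Since the setup is $\tor$-invariant, Lemma~\ref{pluriharmonic} forces this difference to be constant, so $\Phi = \log\det\Hess(u)$ modulo constants.

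For $\Psi_1 + \Psi_2$, I compute $\Psi_1$ and $\Psi_2$ individually from their definitions as logarithms of Mukai-pairing/volume-form ratios. Lemma~\ref{pluriharmonic} supplies local $J$- and $I$-holomorphic coordinates $z_j^J = u_{,j} + \sum_k A_{kj}\mu_k + it_j$ and $z_j^I = u_{,j} - \sum_k A_{kj}\mu_k + it_j$, from which local $(n,0)$-forms $\Omega_I, \Omega_J$ trivializing the two canonical bundles can be built. The closed pure spinors $\psi_1, \psi_2$ encoding the GK structure can then be written in terms of these together with the symplectic form $F$ and the two-form $\beta$ of~\eqref{b}. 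Evaluating $(\psi_i,\bar{\psi}_i)/dV_g$ using the volume identity $\sqrt{\det g} = \det\Hess(u)/\det{\bf \Psi}$ derived from~\eqref{g} and~\eqref{linear-algebra} yields each $\Psi_i$ as an explicit combination of $\log\det\Hess(u)$ and $\log\det{\bf \Psi}$, hence $\Psi_1+\Psi_2$.

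Substituting the expressions for $\Phi$ and $\Psi_1+\Psi_2$ into the displayed formulas and using the elementary identity $JI\,\alpha = \alpha\circ IJ$ for a $1$-form $\alpha$ (and symmetrically $IJ\,\alpha = \alpha\circ JI$) produces the claimed formulas. The \emph{main obstacle} is the middle step: identifying and computing Mukai pairings of the pure spinors $\psi_1, \psi_2$ in the toric coordinates, which requires careful tracking of the non-symmetric decomposition ${\bf \Psi} = \Hess(u) + A$ and of the distinct roles played by ${\bf \Psi}^s, {\bf \Psi}^T$ and $({\bf \Psi}^{-1})^s$ via the identities~\eqref{linear-algebra}; sign conventions for $d^c_I, d^c_J$ also need vigilance. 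As a sanity check in the K\"ahler limit $A = 0$ one has $I = J = J_K$, ${\bf \Psi} = \Hess(u)$, and $IJ = -\Id$, so the stated formula reduces to $\theta_J = -\tfrac{1}{2}d\log\det\Hess(u) + \tfrac{1}{2}d\log\det\Hess(u) = 0$, recovering the vanishing Lee form of a K\"ahler metric.
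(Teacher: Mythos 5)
Your route is genuinely different from the paper's. The paper never touches the spinorial Ricci potentials: it computes $\theta_J=\omega_J\lrcorner\, d\omega_J=-\langle d\omega_J,\omega_J^{-1}\rangle$ directly from the explicit matrix expression \eqref{omega} for $\omega_J$ in angular/momentum coordinates, and then grinds through a chain of matrix identities based on \eqref{linear-algebra}; the proof is entirely self-contained linear algebra. Your reduction $2\theta_I=d(\Psi_1+\Psi_2)+IJ\,d\Phi$, $2\theta_J=d(\Psi_1+\Psi_2)+JI\,d\Phi$ from Proposition \ref{p:transgression} is a clean and correct skeleton, and the identification of $\Phi$ up to sign via Lemma \ref{Bismut-Ricci} and Lemma \ref{pluriharmonic} is sound. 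One concrete warning on the sign: with the paper's conventions one needs $\Phi=-\log\det{\rm Hess}(u)+const$, not $+\log\det{\rm Hess}(u)$, as you can verify independently of any $d^c$ convention by comparing $\gs\, d\Phi=\theta_I^{\sharp}-\theta_J^{\sharp}$ with the difference of the two target formulas (which equals $\tfrac12 d\log\det{\rm Hess}(u)\circ[I,J]$); with your sign the last terms of both formulas come out reversed, and your $A=0$ sanity check would then fail.

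The genuine gap is the step you yourself flag as the main obstacle: the computation of $\Psi_1+\Psi_2$. To close the argument you must produce $\Psi_1+\Psi_2=\log\det{\rm Hess}(u)-2\log\det{\bf \Psi}+const$, and nothing in your write-up (or in the paper) supplies the explicit closed pure spinors $\psi_1,\psi_2$ of a symplectic-type $A$-deformation in toric coordinates, nor their Mukai pairings. Writing $\psi_1=e^{\beta+\i F}$ and $\psi_2$ as $\Omega_{I}$- and $\Omega_J$-type data built from the coordinates of Lemma \ref{pluriharmonic}, verifying closedness, and evaluating $(\psi_i,\bar\psi_i)/dV_g$ is a computation of at least the same length and delicacy as the paper's direct evaluation of \eqref{long-theta}, and it is not a routine citation — Definition \ref{d:Riccipot} only defines the $\Psi_i$ abstractly. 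As it stands the proposal proves the lemma modulo an unproved identity that carries essentially all of the content; either carry out the spinor computation in full, or fall back on the direct contraction $\theta_J=\omega_J\lrcorner\, d\omega_J$ with \eqref{omega}, which needs no spinors at all.
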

\begin{proof}
The Lee form $\theta_J$ is equivalently expressed
\begin{equation}\label{e:theta}
  \theta_J = \omega_J \lrcorner (d\omega_J)= -\big\langle (d\omega_J), (\omega_J)^{-1}\big\rangle.
  \end{equation}
Using formula  \eqref{omega}, we find
 \[
 \begin{split}
 d\omega_J  &=  \sum_{i,j,r=1}^m \Big({\bf \Psi}^{\rm s}\big({\bf \Psi^{\rm T}}\big)^{-1} \Big)_{ij, r} d\mu_i \wedge  dt_j\wedge d\mu_r, \\
 ({\omega_J})^{-1}  &= -\sum_{i,j=1}^m \Big({\bf \Psi}^{\rm T}\big({\bf \Psi}^{\rm s}\big)^{-1}\Big)_{ji} \frac{\partial}{\partial \mu_i} \wedge \frac{\partial}{\partial t_j}.
 \end{split} 
 \]
Substituting in \eqref{e:theta}, we get
 \begin{equation}\label{long-theta}
 \begin{split}
 \theta_J =&\  {\rm tr}\Big(\big({\bf \Psi}^{\rm T} ({\bf \Psi}^{\rm s})^{-1}\big)  \big({\bf \Psi}^{\rm s}{(\bf \Psi^{\rm T})}^{-1} \big)_{, r}\Big) d\mu_r -  \Big(\big({\bf \Psi}^{\rm s}({\bf \Psi}^{\rm T})^{-1}\big)_{, r}\big({\bf \Psi}^{\rm T} ({\bf \Psi}^{\rm s})^{-1}\big)\Big)_{ir}d\mu_i \\
=&\ d \log \det \big(({\bf \Psi}^{\rm T})^{-1} {\bf \Psi}^{\rm s}\big) - \Big(A \big(({\bf \Psi}^{\rm T})^{-1}\big)_{,r} \big({\bf \Psi}^{\rm T} ({\bf \Psi}^{\rm s})^{-1}\big)\  \Big)_{ir} d\mu_i  \\
=&\ d \log \det \big(({\bf \Psi}^{\rm T})^{-1} {\bf \Psi}^{\rm s}\big) + \Big(A({\bf \Psi}^{\rm T})^{-1}({\bf \Psi}^{\rm T})_{,r} ({\bf \Psi}^{\rm s})^{-1} \Big)_{ir} d\mu_i  \\
=& -d\log \det ({\bf \Psi}) + d\log \det {\rm Hess}(u) + \Big(A({\bf \Psi}^{\rm T})^{-1}({\bf \Psi}^{\rm s})_{,r} ({\bf \Psi}^{\rm s})^{-1} \Big)_{ir} d\mu_i  \\
=& -d\log \det ({\bf \Psi}) + d\log \det {\rm Hess}(u) + \big(A({\bf \Psi}^{\rm T})^{-1}\big)_{i\ell}\big(({\bf \Psi}^{\rm s})_{,r}\big)_{\ell j} \big(({\bf \Psi}^{\rm s})^{-1}\big)_{jr}  d\mu_i \\
=& -d\log \det ({\bf \Psi}) + d\log \det {\rm Hess}(u) \\
&+ \tfrac{1}{2} \big(({\bf \Psi} -{\bf \Psi}^{\rm T})({\bf \Psi}^{\rm T})^{-1}\big)_{i\ell}\big(({\bf \Psi}^{\rm s})_{,\ell}\big)_{rj} \big(({\bf \Psi}^{\rm s})^{-1}\big)_{jr}  d\mu_i \\
=& -d\log \det ({\bf \Psi}) + \tfrac{1}{2} d \log \det {\rm Hess}(u) -\tfrac{1}{2}  d\log \det {\rm Hess}(u) \circ IJ,
\end{split} 
\end{equation}
as claimed.  The proof for $\theta_I$ is similar.
\end{proof}

\begin{lemma}\label{toric-soliton}
Let $(F,g, I,J)$ be an $A$-deformation of a locally toric K\"ahler structure $(g_K, J_K)$, where ${\bf G}^K= {\rm Hess}(u)$.  Suppose further that $(g_K, J_K)$ is a KRS with constants $b \in \tor, c \in \tor^*$ guaranteed by Lemma \ref{l:KRS}.  Then the GK data satisfies
\begin{equation}\label{symplectic-GKRF} \rho_J + L_{JX_J} F = 0, \qquad \rho_I + L_{IX_I} F = 0, \end{equation}
where $X_J:= \frac{1}{2}\left( b + A(c)\right) \in \tor$ and $X_I := \frac{1}{2}\left(b - A(c)\right)\in \tor$.
\end{lemma}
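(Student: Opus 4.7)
The plan is to reduce both identities to computing $dd^c_I$ and $dd^c_J$ applied to $\phi := \log\det\Hess(u)$, which simplifies dramatically once $\phi$ is expressed modulo pluriharmonic functions. First, I would invoke Lemma \ref{Bismut-Ricci} to write
\[\rho_J = \tfrac{1}{2} dd^c_I \phi, \qquad \rho_I = \tfrac{1}{2} dd^c_J \phi.\]
Since $dd^c_I$ annihilates $I$-pluriharmonic functions (and similarly for $J$), the entire problem becomes identifying $\phi$ modulo $I$-pluriharmonic functions (to handle $\rho_J$) and modulo $J$-pluriharmonic functions (to handle $\rho_I$).

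Next, Lemma \ref{l:KRS} rewrites $\phi$ as $\langle b, \mu\rangle - \langle c, \nabla u\rangle$ up to a constant. Using Lemma \ref{pluriharmonic}, which identifies $u_{,j} - \sum_k A_{kj}\mu_k$ as $I$-pluriharmonic and $u_{,j} + \sum_k A_{kj}\mu_k$ as $J$-pluriharmonic, together with the antisymmetry of $A$ and the natural identification of $A \in \Lambda^2\tor$ as a map $\tor^*\to\tor$, a short linear-algebraic calculation yields
\[\phi \equiv 2\langle X_J, \mu\rangle \pmod{I\text{-pluriharmonic}}, \qquad \phi \equiv 2\langle X_I, \mu\rangle \pmod{J\text{-pluriharmonic}},\]
where $X_J = \tfrac{1}{2}(b + A(c))$ and $X_I = \tfrac{1}{2}(b - A(c))$ are exactly the vectors in the statement. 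Hence $\rho_J = dd^c_I \mu_{X_J}$ and $\rho_I = dd^c_J \mu_{X_I}$, where $\mu_X := \langle X, \mu\rangle$ is the moment of $X \in \tor$.

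The final step is to verify, for any $X \in \tor$, the identity $dd^c_I \mu_X = -L_{JX} F$ (and the analogous $dd^c_J \mu_X = -L_{IX} F$). This is a short direct computation: starting from the moment-map identity $d\mu_X = -\iota_X F$ and applying \eqref{eq:IJconj}, one obtains $d^c_I \mu_X = -\iota_{JX} F$; closedness of $F$ then gives $dd^c_I \mu_X = -L_{JX} F$. Evaluating at $X = X_J$ gives $\rho_J = -L_{JX_J} F$, and the same argument with $I$ and $J$ swapped gives $\rho_I = -L_{IX_I} F$.

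The only delicate point, and what I expect to be the main bookkeeping obstacle, is aligning the antisymmetric convention for $A(c)$ with the sign of the pluriharmonic correction in Lemma \ref{pluriharmonic} so that the linear-in-$\mu$ piece of $\phi$ lands precisely on $\tfrac{1}{2}(b + A(c))$ modulo $I$-pluriharmonic and on $\tfrac{1}{2}(b - A(c))$ modulo $J$-pluriharmonic. Once this sign check is carried out, the rest of the proof is essentially formal: the Bismut--Ricci formula reduces matters to $dd^c$ of a linear function of the moment map, and \eqref{eq:IJconj} translates this immediately into the desired Lie derivative of $F$ along the canonical vector fields $JX_J$ and $IX_I$.
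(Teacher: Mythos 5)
Your proposal is correct and follows essentially the same route as the paper's proof: both reduce the identity to $\rho_J + L_{JX_J}F = \tfrac{1}{2}dd^c_I\bigl(\log\det\Hess(u) - 2\mu^{X_J}\bigr)$ via Lemma \ref{Bismut-Ricci} and the $F$-conjugacy identity $L_{JX_J}F = -dd^c_I\mu^{X_J}$, and then kill the result using the KRS equation of Lemma \ref{l:KRS} together with the $I$-pluriharmonicity of $\langle c, \nabla u - A(\mu)\rangle$ from Lemma \ref{pluriharmonic}. The sign bookkeeping you flag is handled exactly as you anticipate, via $\langle A(c),\mu\rangle = -\langle c, A(\mu)\rangle$.
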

\begin{proof} Using that $(I, J)$ are $F$-conjugate as in (\ref{eq:IJconj}) and that $F$ is closed, we have
\begin{equation} \label{JvJ}
 L_{JX_J} F= d(F(JX_J)) = d I F(X_J) = -dd^c_I \mu^{X_J},\end{equation}
where $\mu^{X_J}:= \langle \mu, X_J\rangle$ is the momenta of $X_J\in \tor$.  As $X_J= \frac{1}{2}\left(b + A(c)\right)$,  we compute
\begin{equation}\label{mu-vJ}
\begin{split}
 2\mu^{X_J} =&\ \langle b, \mu \rangle +  \langle A(c), \mu \rangle
= \langle b, \mu \rangle -  \langle c, A(\mu)\rangle 
= \langle b, \mu \rangle - \langle c, \nabla u \rangle  + \langle c, \nabla u - A(\mu)\rangle.
\end{split}\end{equation}
Now rearranging, applying $d d^c_I$, and using Lemmas~\ref{Bismut-Ricci} and \ref{pluriharmonic} we observe
\begin{align*}
    \rho_J + L_{J X_J} F =&\ \tfrac{1}{2} d d^c_I \left( \log \det \Hess(u) - 2 \mu^{X_J} \right)\\
    =&\ \tfrac{1}{2} d d^c_I \left( \log \det \Hess(u) - \langle b, \mu \rangle + \langle c, \nabla u \rangle - \langle c, \nabla u - A(\mu)\rangle\right)\\
    =&\ \tfrac{1}{2} d d^c_I \left( \log \det \Hess(u) - \langle b, \mu \rangle + \langle c, \nabla u \rangle \right)\\
    =&\ 0,
\end{align*}
where the final line uses the KRS equation from Lemma \ref{l:KRS}.
The claim for the complex structure $I$ is treated similarly.
\end{proof}

In the theory of generalized K\"ahler-Ricci solitons, the soliton vector fields $X_I$ and $X_J$ take a particular canonical shape relative to the Lee forms and the soliton potential $f$, which is derived from global considerations using the Perelman $\mathcal F$-functional (cf. \cite{SU2}).  As it turns out, in the toric setting it is possible to derive a local expression for the potential function, which is crucial in establishing the equivalence between generalized K\"ahler and K\"ahler-Ricci solitons.
\begin{lemma}\label{GKRS} Given the setup of Lemma \ref{toric-soliton}, one has
\[ X_J = \tfrac{1}{2} J \left( \theta_J^{\sharp} - \N f \right), \qquad X_I = \tfrac{1}{2} I \left( \theta_I^{\sharp} - \N f \right),\]
where
\begin{equation}\label{f} f : = - \log \det ({\rm Hess}(u) + A) - \langle c, \nabla u \rangle.\end{equation}
\end{lemma}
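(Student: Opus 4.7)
The plan is to verify the identity $X_J = \tfrac{1}{2}J(\theta_J^\sharp - \nabla f)$ by direct computation in the angular/momentum coordinates $(t,\mu)$; the corresponding identity for $X_I$ will then follow from the parallel computation with $A$ replaced by $-A$, consistent with the conjugation \eqref{eq:IJconj}. First I would observe that, because $f$ depends only on $\mu$ and $\theta_J$ is a combination of $d\mu_r$ by Lemma \ref{Lee-forms}, the block-diagonal form of $g$ in \eqref{g} places both $\theta_J^\sharp$ and $\nabla f$ in $J\tor = {\rm span}\{\partial/\partial\mu_k\}$; consequently $J(\theta_J^\sharp - \nabla f)$ lies in $\tor$, of the same type as $X_J = \tfrac{1}{2}(b + A(c))$. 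Dualizing via $\omega_J(Y,Z) = g(JY, Z)$ reduces the identity to the 1-form equation
\[ df = \theta_J + 2\,\iota_{X_J}\omega_J. \]

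Next I would compute each side in coordinates. Differentiating $f = -\log\det{\bf \Psi} - \langle c, \nabla u\rangle$ yields an expression for $(df)_r$ in which only the symmetric part of ${\bf \Psi}^{-1}$ appears, by virtue of the antisymmetry of $A$ and the symmetry of $u_{,ijr}$ in $i,j$. From Lemma \ref{Lee-forms}, the expression for $\theta_J$ involves $\log\det\Hess(u)$, $\log\det{\bf \Psi}$, and the term $-\tfrac{1}{2} d\log\det\Hess(u)\circ IJ$, which (upon computing the action of $IJ$ on the $\partial/\partial\mu_r$ directions) produces the matrix $P := {\bf \Psi}({\bf \Psi}^T)^{-1} = (u+A)(u-A)^{-1}$. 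Using \eqref{omega}, I would write $-2\,\iota_{X_J}\omega_J = (uB(b + A(c)))_r\,d\mu_r$ with $B := (u-A)^{-1}$, and use the elementary identity $uB = \tfrac{1}{2}(I+P)$. To tie the two sides together I would then differentiate the drift Monge-Amp\`ere equation of Lemma \ref{l:KRS}, obtaining
\[ (\log\det\Hess(u))_{,r} = b_r - c_k u_{,kr}, \]
which eliminates the $\log\det\Hess(u)$ contributions in $\theta_J$ after substitution.

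After these substitutions, the identity reduces to a finite-dimensional matrix equation of the form $(I - P)u + (I + P)A = 0$ applied to $c$. This is the main technical obstacle, but it follows from the defining relations $(u-A)B = B(u-A) = I$: expanding $uBA = (u-A)BA + ABA = A + ABA$ and symmetrically $ABu = AB(u-A) + ABA = A + ABA$ gives $uBA = ABu$. Combined with $(I+P)A = 2uBA$ and $(I-P)u = -2ABu$, this yields the required identity, completing the verification of $df = \theta_J + 2\,\iota_{X_J}\omega_J$. The identity for $X_I$ then follows from the same argument with $A \mapsto -A$ throughout.
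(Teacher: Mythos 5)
Your proposal is correct and follows essentially the same route as the paper: both verify the dual one-form identity $df = \theta_J + 2\,\iota_{X_J}\omega_J$ by a direct computation in angular/momentum coordinates using Lemma \ref{Lee-forms}, the coordinate expression \eqref{omega}, and the differentiated Monge--Amp\`ere equation of Lemma \ref{l:KRS}. The only organizational difference is that you isolate the final cancellation as the explicit matrix identity $(I-P)\,{\rm Hess}(u) + (I+P)A = {\bf \Psi} - P{\bf \Psi}^{\rm T} = 0$, whereas the paper absorbs the same linear algebra by expressing $d\mu\circ IJ$ via \eqref{boulanger} and \eqref{I-matrix}; the two come to the same thing.
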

\begin{proof}
By the definition of $\mu^{X_J}$ and   using $F= -2g(I+J)^{-1}$ (this follows from  \eqref{eq:IJconj}), see e.g. \cite{ASU2}),  we have $X_J = -F^{-1}(d\mu^{X_J})= \tfrac{1}{2}(I+J)g^{-1}(d\mu^{X_J})$.  Observe from \eqref{mu-vJ} and the KRS equation that
\begin{align*}
    2 \mu^{X_J} = \log \det \Hess(u) + \IP{c, \N u - A \mu}.
\end{align*}
We introduce the notation
\begin{align*}
\langle c \cdot, \left({\rm Hess}(u) \pm A\right), d\mu \rangle:= \sum_{k,j=1}^n \left({\rm Hess}(u) \pm A\right)_{kj}c_j d\mu_k,
\end{align*}
and observe the basic fact that
\begin{align*}
    d \IP{c, \N u} = \IP{c, \Hess (u), d \mu}.
\end{align*}
Using this we compute by Lemma~\ref{Lee-forms}: 
\[ 
\begin{split}
2 g(JX_J) =&\ g(J(I+J))g^{-1}(d\mu^{X_J}) \\
=&\ -\tfrac{1}{2}d\log \det {\rm Hess}(u) + \tfrac{1}{2}d\log \det {\rm Hess}(u)\circ IJ \\
&\ -\tfrac{1}{2} d\langle c, \nabla u - A\mu\rangle + \tfrac{1}{2}d\langle c, \nabla u - A\mu\rangle \circ IJ\\ 
=&\ -\theta_J - d\log \det ({\rm Hess}(u)+A) \\
&\ -\tfrac{1}{2} d\langle c, \nabla u - A\mu\rangle + \tfrac{1}{2} \langle c, ({\rm Hess}(u)- A), d\mu \rangle \circ IJ  \\
=&\  -\theta_J - d\log \det ({\rm Hess}(u)+A) \\
&\ -\tfrac{1}{2} d\langle c, \nabla u - A\mu\rangle - \tfrac{1}{2}\langle c, ({\rm Hess}(u) +A), d\mu \rangle\\
=&\ -\theta_J - d\log \det ({\rm Hess}(u)+A) - d\langle c, \nabla u \rangle\\
=&\ - \theta_J + df,
\end{split}\]
using \eqref{boulanger} and \eqref{I-matrix} to express $d\mu \circ IJ$ in passing from the 3rd equality to the 4th.
Rearranging gives the claimed expression for $X_J$, and the computation for $X_I$ is similar, noting that $\det(\Hess(u) + A) = \det (\Hess(u)-A)$). \end{proof}

\begin{proof}[\bf Proof of Theorem \ref{t:Atypeconstruction}] If $(g_K, J_K)$ is a KRS, the claim for $(g, I, J)$ follows from Lemmas~\ref{toric-soliton} and \ref{GKRS} above and the characterization of GKRS (\cite{SU2} Proposition 4.1).

In the other direction, using (\cite{SU2} Proposition 4.1) and (\cite{ASU2} Lemma 2.9), for any symplectic-type GKRS the identity \eqref{symplectic-GKRF} holds. 
It follows using the general identity \eqref{JvJ} that the moment map $\mu^{X_J}$ of $X_J$ satisfies
\[ dd^c_I \mu^{X_J} = \rho_J.\]
By Lemma~\ref{Bismut-Ricci} we have that $2 \mu^{X_J}-\log \det {\rm Hess}(u)$ is a $\tor$-invariant $I$-pluriharmonic function, hence by Lemma~\ref{pluriharmonic} there exists $c_J \in \tor$ such that
\begin{gather} \label{thm1:1} 2\mu^{X_J}-\log \det {\rm Hess}(u)= \langle c_J, \nabla u- A \mu\rangle + const.
\end{gather}
We thus obtain the expression \eqref{mu-vJ} for $\mu^{X_J}$ with constant $c_J$, and hence the calculation of Lemma~\ref{GKRS} (which only uses the identities \eqref{symplectic-GKRF} and \eqref{mu-vJ} that we have just established) can be repeated to show that 
\[X_J = \tfrac{1}{2}\left( J\theta_J^{\sharp}  - J\nabla f_J\right),  \]
with $f_J$ given by \eqref{f}, i.e.
\[ f_J = -\log \det ({\rm Hess}(u) + A) - \langle c_J, \nabla u \rangle + const.\]
By a directly analogous argument we obtain that there exists $c_I \in \tor^*$ such that
\begin{gather} \label{thm1:2} 2 \mu^{X_I} - \log\det {\rm Hess}(u) = \langle c_I, \nabla u+A \mu \rangle + const,
\end{gather}
and 
\[X_I - I\theta_I^{\sharp} = - I\nabla f_I
\]
with 
\[ f_I = -\log \det ({\rm Hess}(u) - A) - \langle c_I, \nabla u \rangle + const.\]
As $(g, I, J)$ is a GKRS, there exists a smooth function $f$ such that $X_J- J\theta_J^{\sharp} = -J\nabla f$ and $X_I-I\theta_I^{\sharp} = - I \nabla f$. 
It follows from the expressions above 
\[ f_J = f + const, \qquad f_I = f + const.\]
Using further that $\det (\Hess(u) + A) = \det(\Hess(u) - A)$, it follows that $f_I - f_J = \langle c_J - c_I, \nabla u \rangle + const$ is a constant, and thus $c_I=c_J$, henceforth denoted $c$.  Letting 
\[ b:= X_I + X_J\in \tor,\]
equations (\ref{thm1:1}) and (\ref{thm1:2}) now yield
\[ \mu^b = \langle b, \mu\rangle= \log \det {\rm Hess}(u) + \langle c, \nabla u\rangle + const, \]
so $(g_K, J_K)$ is a toric KRS with potential function $f_K= -\langle b, \mu \rangle$ by Lemma~\ref{l:KRS}.
\end{proof}

\begin{rmk}
In dimension 4, if $\left<c,b\right>\neq 0$, then $X_I:=\tfrac{1}{2}(b-Ac)$ and $X_J:=\tfrac{1}{2}(b+Ac)$ give a basis of $\mathfrak{t}$.  Using this basis, we see the polytope in $\R_{x,y}$.
With respect to the dual basis $X_I^*,X_J^*$, it follows that $c=\gl (X_I^*+X_J^*)$ where $\gl=\frac12\la c,b\ra$.
The MA equation (where $\Hess\ u$ is computed in this basis) is
\begin{align*}
    \log\det\Hess\ u =& -\gl \la \nabla u, X_I^*+X_J^*\ra + \la \mu, X_I+X_J\ra +const\\
    =& -\gl(u_x + u_y) + x+y +const.
\end{align*}
We expound on this construction further in \S \ref{s:reduction}.
\end{rmk}

\begin{rmk}\label{r:GKRSpotential} It follows from the proofs above that the GK soliton potential $f$ can be explicitly computed
\begin{align*}
    f =&\ - \log \det (\Hess(u) + A) - \IP{c, \N u} + const\\
    =&\ \log \frac{\det \Hess(u)}{\det (\Hess(u) + A)} - \log \det \Hess(u) - \IP{c, \N u} + const\\
    =&\ \log \frac{\det \Hess(u)}{\det (\Hess(u) + A)} - \IP{b, \mu} + const.
\end{align*}
See also Proposition \ref{p:explicitf} below.
\end{rmk}

\begin{rmk} \label{r:horizontalRicci}
A general result of Kolesnikov (\cite{kolesnikov2012hessian} Theorem 1.1) (cf. also \cite{ustinovskiy2024geometry} Proposition 4.5) shows that the drift Monge-Amp\`ere equation we derive for GKRS implies that the horizontal metric has nonnegative Bakry-\'Emery-Ricci tensor with potential function $\IP{c, \N u} - \IP{b, \mu}$.
\end{rmk}

\subsection{Legendre duality for toric K\"ahler-Ricci solitons}\label{s:Legendre}

Here we record consequences of Legendre transform and their relationship to $A$-deformations.  To begin, for a convex function $u$ we define the Legendre transformation $\phi$ via
\begin{gather} \label{f:legendre}
\phi(y) + u(x) = \langle x, y \rangle, \qquad y:= \nabla u(x), \quad x = \nabla \phi(y),
\end{gather}
Assuming $u$ is the symplectic potential for a toric KRS, let
\[ g_K={\rm Hess}(u) \oplus {\rm Hess}(u)^{-1} \]
be the associated Riemannian metric defined on $U \oplus \tor$ where $x\in U\subset \tor^*, \, u(x) \in C^{\infty}(U)$, and ${\rm Hess}(u)$ is seen as a smooth section of $S^2\tor$ over $U$ whereas its inverse is a smooth section of $S^2\tor^*$ over $U$.  Similarly we define
\[ g_K^* = {\rm Hess}(\phi) \oplus   {\rm Hess}(\phi)^{-1}\]
on $V\oplus \tor^*$, where $V\subset \tor$ is the image of $U$ via $\nabla u : U \to V$.  The following lemma is an elementary consequence of the key property ${\rm Hess}(u)^{-1} ={\rm Hess}(\phi)$ of Legendre transformation:

\begin{lemma} If $u$ is the symplectic potential of a locally toric KRS $g_K$ satisfying (\ref{f:KRS}), then $g_K^*$ is a locally toric KRS with potential $\phi$ satisfying (\ref{f:KRS}), with the roles of $b$ and $c$ interchanged.
\end{lemma}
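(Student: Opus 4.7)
The plan is a direct substitution argument using the standard identities of Legendre duality, with no analytic content beyond bookkeeping. First I would recall from \eqref{f:legendre} that $y = \nabla u(x)$ and $x = \nabla \phi(y)$, so that differentiating either identity gives $\Hess(u)(x)\cdot \Hess(\phi)(y) = \mathrm{Id}$. Fixing a basis of $\tor$ (and the dual basis of $\tor^*$) in which the determinants in \eqref{f:KRS} are computed, it then follows that $\log\det \Hess(\phi)(y) = -\log\det \Hess(u)(x) + \mathrm{const}$, where the constant depends only on the choice of basis and will be absorbed into the additive constant of \eqref{f:KRS}.

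Next, starting from the KRS equation $\langle b, x\rangle = \langle c, \nabla u\rangle + \log\det\Hess(u) + \mathrm{const}$ written in the variable $x$, I would substitute $x = \nabla \phi(y)$ on the left, $\nabla u = y$ in the first term on the right, and the determinant identity above in the second term. This immediately yields
\[
\langle b, \nabla\phi(y)\rangle = \langle c, y\rangle - \log\det\Hess(\phi)(y) + \mathrm{const},
\]
which rearranges to
\[
\langle c, y\rangle = \langle b, \nabla\phi(y)\rangle + \log\det\Hess(\phi)(y) + \mathrm{const}.
\]
This is precisely \eqref{f:KRS} for the symplectic potential $\phi$, with $c \in \tor^*$ now playing the role of the element paired with the coordinate and $b \in \tor$ playing the role of the element paired with the gradient of the potential, i.e.\ the roles of $b$ and $c$ have been interchanged as claimed. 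Applying Lemma~\ref{l:KRS} on the dual side then produces the toric KRS $g_K^*$.

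There is essentially no obstacle here; the only thing to verify carefully is the compatibility of the pairings under dualization. Since $\nabla u : U \subset \tor^* \to V \subset \tor$ and $\nabla \phi : V \to U$, the pairings $\langle b, x\rangle$ (with $b\in\tor$, $x\in\tor^*$) and $\langle c, \nabla u\rangle$ (with $c\in\tor^*$, $\nabla u\in\tor$) dualize cleanly to $\langle b, \nabla\phi\rangle$ (with $b\in\tor$, $\nabla\phi\in\tor^*$) and $\langle c, y\rangle$ (with $c\in\tor^*$, $y\in\tor$) under the canonical pairing $\tor\times\tor^*\to \R$, so the symbolic swap of $b$ and $c$ is literal. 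Thus the lemma reduces to the one-line manipulation above.
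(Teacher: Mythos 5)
Your proof is correct and is precisely the argument the paper intends: the lemma is stated there as an ``elementary consequence'' of $\Hess(u)^{-1}=\Hess(\phi)$, and the dual equation you derive, $\langle c, y\rangle = \langle b,\nabla\phi\rangle + \log\det\Hess(\phi) + \mathrm{const}$, appears verbatim as equation \eqref{KRS-dual} later in the paper. No gaps.
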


Now fix a \emph{nondegenerate} element $A\in \Lambda^{2}\tor$, which we see as a linear automorphism $A: \tor^* \to \tor$ with inverse $A^{-1} : \tor \to \tor^*$. 
\begin{lemma} The function $\phi_A(x):= \phi(Ax)$ is a convex function on $U_A := A^{-1}(V) \subset \tor^*$ which satisfies the KRS equation \eqref{f:KRS} with soliton constants $A(c) \in \tor$ and $A^{-1}(b) \in \tor^*$.
\end{lemma}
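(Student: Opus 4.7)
The plan is to verify both claims—convexity of $\phi_A$ and the KRS identity with the stated soliton constants—by direct substitution into the Legendre-dual KRS equation established in the preceding lemma, exploiting the linear-algebraic identities satisfied by the skew-adjoint isomorphism $A\colon \tor^* \to \tor$. The convexity part is immediate from the chain rule: for any $\xi, \eta \in \tor^*$,
\[ \Hess(\phi_A)_x(\xi, \eta) = \Hess(\phi)_{Ax}(A\xi, A\eta), \]
and since $\Hess(\phi) > 0$ on $V$ while $A$ is a linear isomorphism, the right-hand side is a positive definite bilinear form on $\tor^*$ whenever $Ax \in V$, that is, on $U_A$.

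For the KRS identity, the preceding lemma yields
\[ \langle c, y\rangle = \langle b, \nabla\phi(y)\rangle + \log\det\Hess(\phi)(y) + const, \qquad y \in V. \]
I would substitute $y = Ax$ and translate each of the three terms into a quantity involving $\phi_A$ at $x$. The chain rule together with the skew-adjointness of $A \in \Lambda^2 \tor$, namely $\langle A\alpha, \beta\rangle = -\langle \alpha, A\beta\rangle$ for $\alpha, \beta \in \tor^*$, gives
\[ \nabla\phi_A(x) = -A\,\nabla\phi(Ax), \qquad \text{equivalently} \qquad \nabla\phi(Ax) = -A^{-1}\,\nabla\phi_A(x). \]
Pairing the second identity with $b \in \tor$ and invoking the analogous skew-adjointness of $A^{-1}\colon \tor \to \tor^*$ rewrites the middle term as $\langle A^{-1}(b), \nabla\phi_A(x)\rangle$, identifying the second soliton constant. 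Applying skew-adjointness of $A$ to $\langle c, Ax\rangle$ likewise produces a pairing of $x \in \tor^*$ against $A(c) \in \tor$. Finally, the Hessian identity above gives $\det\Hess(\phi_A)(x) = (\det A)^2 \det\Hess(\phi)(Ax)$, so the two log-determinant terms differ by an additive constant $-2\log|\det A|$, which is absorbed into the constant on the right-hand side.

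Assembling these three substitutions produces the KRS equation \eqref{f:KRS} for $\phi_A$ with soliton constants $A(c) \in \tor$ and $A^{-1}(b) \in \tor^*$. The only nontrivial bookkeeping is tracking the signs produced by the skew-adjointness of $A$; these must appear symmetrically on both sides of the Legendre-dual equation so that the identification of soliton constants is consistent, which is a routine linear-algebraic verification rather than a conceptual obstacle.
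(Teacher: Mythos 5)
The paper states this lemma without proof, so there is no argument of the authors' to compare against; your strategy---substitute $y=Ax$ into the Legendre-dual equation $\langle c, y\rangle = \langle b, \nabla\phi(y)\rangle + \log\det{\rm Hess}(\phi)(y) + const$ from the preceding lemma and track the chain rule---is certainly the intended one, and your treatment of convexity and of the determinant term (an additive constant $-2\log|\det A|$) is correct.

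However, the one step you defer as ``routine,'' the sign bookkeeping, is the only nontrivial content of the lemma, and your assertion that the skew-adjointness signs ``appear symmetrically on both sides'' is false. With the paper's own convention $\langle A(c),\mu\rangle = -\langle c, A(\mu)\rangle$ (used explicitly in \eqref{mu-vJ}), the linear term acquires exactly one sign, $\langle c, Ax\rangle = -\langle A(c), x\rangle$, whereas the gradient term acquires two signs which cancel: from $\nabla\phi_A(x) = -A\,\nabla\phi(Ax)$ one gets
\begin{equation*}
\langle b, \nabla\phi(Ax)\rangle = -\langle b, A^{-1}\nabla\phi_A(x)\rangle = +\langle A^{-1}(b),\nabla\phi_A(x)\rangle,
\end{equation*}
the second equality using skew-adjointness of $A^{-1}\colon\tor\to\tor^*$. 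Assembling the three terms therefore yields
\begin{equation*}
\langle -A(c), x\rangle = \langle A^{-1}(b), \nabla\phi_A(x)\rangle + \log\det{\rm Hess}(\phi_A)(x) + const,
\end{equation*}
i.e.\ the KRS equation \eqref{f:KRS} with first soliton constant $-A(c)$ rather than $A(c)$ (and second constant $A^{-1}(b)$ as stated). So either the contraction $\Lambda^2\tor\otimes\tor^*\to\tor$ must be fixed with the opposite sign in one of the two places it appears---in which case you should state the convention and check it against the one used in Lemma~\ref{toric-soliton}---or the first constant in the statement should carry a minus sign. Your proposal does not detect this because it never carries out the verification it appeals to; the computation must be done explicitly rather than dismissed by a symmetry that does not hold.
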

Denote by $\tilde g_K^*$ the KRS metric associated to $\phi_A$.  Note that this metric pulls back to $g_K^*$ by $(A^{-1}, A) : V  \oplus \tor^* \to U_A \oplus \tor$. Explicitly we have
\[ {\tilde g}^*_K = -A\left({\rm Hess}(\phi)\right)A \oplus -A^{-1}\left({\rm Hess}(\phi)\right)A^{-1}.\]
Further, consider the Riemannian metric $\tilde g^*$ on $U_A \oplus \tor$ corresponding to the $A$-deformation of ${\tilde g}^*_K$ associated to the bivector $(-A)=A^*$ (cf. \eqref{g}):
\[ 
\begin{split} {\tilde g}^* &= -A\left({\rm Hess}(\phi)\right)A \oplus -\left(\left(A\left({\rm Hess}(\phi)\right)A + A\right)^{-1}\right)^s  \\
&= -A\left({\rm Hess}(u)\right)^{-1}A \oplus -\left(\left(A\left({\rm Hess}(u)\right)^{-1}A + A\right)^{-1}\right)^s.
\end{split}\]
Alternatively, let 
\[ g= {\rm Hess}(u) \oplus \left(({\rm Hess}(u) + A)^{-1}\right)^s\]
be the Riemannian metric on $U\oplus \tor$, obtained via the $A$-deformation of $g_K$ and  
\[{\tilde g} := \left((\nabla \phi) \circ A\right)^* g = -A \left({\rm Hess}(\phi)\right) A \oplus \left(({\rm Hess}(u) + A)^{-1}\right)^s \]
its pullback to a metric on $U_A \oplus \tor$.
\begin{lemma}\label{legendre+A} Given the setup above, $\tilde g^* = \tilde g$.
\end{lemma}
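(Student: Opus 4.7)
The plan is to reduce $\tilde g^* = \tilde g$ to a pointwise algebraic identity. First observe that the horizontal components of $\tilde g^*$ and $\tilde g$ already agree: both equal $-A\,\Hess(\phi)\,A$, with the formula for $\tilde g$ using the Legendre identification $\Hess(u)|_{\nabla\phi(Ax)} = \Hess(\phi)|_{Ax}^{-1}$ between Hessians at corresponding points of $U$ and $V$. Under the same identification, the vertical component of $\tilde g$ becomes $\bigl((\Hess(\phi)^{-1}+A)^{-1}\bigr)^{s}$. Writing $P := \Hess(\phi)$ (symmetric, positive definite) at a fixed point, the lemma therefore reduces to the pointwise matrix identity
\begin{equation*}
-\bigl((APA + A)^{-1}\bigr)^{s} \;=\; \bigl((P^{-1}+A)^{-1}\bigr)^{s}.
\end{equation*}

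The approach is to upgrade this to the stronger (non-symmetrized) identity
\begin{equation*}
(APA + A)^{-1} \;=\; A^{-1} - (P^{-1}+A)^{-1}.
\end{equation*}
This is a short algebraic manipulation: factoring $APA + A = A(PA+\Id)$ gives
\begin{equation*}
A^{-1} - (APA+A)^{-1} = \bigl(\Id - (PA+\Id)^{-1}\bigr)A^{-1} = (PA+\Id)^{-1}PA\cdot A^{-1} = (PA+\Id)^{-1}P,
\end{equation*}
and the factorization $P^{-1}(PA+\Id) = A + P^{-1}$ shows that $(PA+\Id)^{-1}P = (A+P^{-1})^{-1}$, as required.

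To conclude, take symmetric parts on both sides of the non-symmetrized identity. Since $A$ is antisymmetric so is $A^{-1}$, hence $(A^{-1})^{s} = 0$, and one obtains exactly the required equality of vertical components, completing the proof that $\tilde g^* = \tilde g$. There is no serious conceptual obstacle; the only potential pitfall is the non-commutativity of the symmetric factor $P$ and the antisymmetric factor $A$, which requires careful tracking in the factorizations above.
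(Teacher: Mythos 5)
Your proof is correct, and it takes a genuinely different route from the paper's. The paper reduces the lemma to the symmetrized identity \eqref{crazy} and proves it by applying the polar-decomposition identity from \eqref{linear-algebra} twice, once to ${\bf \Psi}=\Hess(u)+A$ and once to ${\bf \Psi}=A\Hess(u)^{-1}A+A$, computing $\bigl(({\bf \Psi}^{-1})^{s}\bigr)^{-1}={\bf \Psi}^{s}-{\bf \Psi}^{a}({\bf \Psi}^{s})^{-1}{\bf \Psi}^{a}$ in each case and observing that the two results are negatives of each other. You instead establish the sharper, non-symmetrized resolvent-type identity $(APA+A)^{-1}=A^{-1}-(P^{-1}+A)^{-1}$ with $P=\Hess(\phi)=\Hess(u)^{-1}$, by the factorizations $APA+A=A(PA+\Id)$ and $P^{-1}(PA+\Id)=P^{-1}+A$, and then symmetrize using $(A^{-1})^{s}=0$; I checked the algebra and it is sound (e.g.\ it verifies on $P=\Id$, $A$ the standard symplectic matrix in dimension $2$). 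Your identity is strictly stronger --- it also pins down the antisymmetric parts, giving $\bigl((APA+A)^{-1}\bigr)^{a}+\bigl((P^{-1}+A)^{-1}\bigr)^{a}=A^{-1}$ --- and it avoids inverting the symmetrized inverses altogether; both arguments use the nondegeneracy of $A$ assumed in this setup (yours to form $A^{-1}$, the paper's to ensure ${\bf \Psi}^{s}=A\Hess(u)^{-1}A$ is invertible). The treatment of the horizontal components via the Legendre identification $\Hess(u)=\Hess(\phi)^{-1}$ at corresponding points matches the paper's setup, where those components agree by construction.
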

\begin{proof}  Comparing the two metrics, we only need to show the identity
\begin{equation}\label{crazy} \left(\left({\rm Hess}(u) + A\right)^{-1}\right)^{s} = -\left(\left(A\left({\rm Hess}(u)^{-1}\right)A + A\right)^{-1}\right)^s,\end{equation}
which we can view as a general identity for a couple of a symmetric positive definite and antisymmetric matrices. 
Let ${\bf \Psi} ={\bf \Psi}^s + {\bf \Psi}^{a}$ be any nondegenerate matrix whose symmetric part ${\bf \Psi}^s$ is nondegenerate. By the first equality in \eqref{linear-algebra}, we have
\[\begin{split}
    \left(\left({\bf \Psi}^{-1}\right)^s\right)^{-1} &= {\bf \Psi}^{T}\left({\bf \Psi}^s\right)^{-1} {\bf \Psi}=({\bf \Psi}^s -{\bf \Psi}^{a})\left({\bf \Psi}^s\right)^{-1}({\bf \Psi}^s +{\bf \Psi}^{a})\\
    &= {\bf \Psi}^s - {\bf \Psi}^{a}({\bf \Psi}^s)^{-1}{\bf \Psi}^{a}. 
\end{split}\]
Letting ${\bf \Psi}= {\rm Hess}(u) + A$ and ${\bf \Psi}=-A({\rm Hess}(u))^{-1} - A$ in the above general identity, we get  
\[ 
\begin{split}  \left(\left(\left({\rm Hess}(u) + A\right)^{-1}\right)^{s}\right)^{-1} &= {\rm Hess}(u)- A({\rm Hess}(u))^{-1}A, \\
 \left(\left(\left(A({\rm Hess}(u))^{-1}A + A\right)^{-1}\right)^{s}\right)^{-1} &= A({\rm Hess}(u))^{-1}A -A(A^{-1}({\rm Hess}(u))A^{-1})A \\
 &= A({\rm Hess}(u))^{-1}A -{\rm Hess}(u).
\end{split} \]
The equality \eqref{crazy} follows from the above.
\end{proof}
The geometric meaning of Lemma~\ref{legendre+A} comes from the following:
\begin{prop}\label{GK-duality} Suppose $(g, I, J, F, \tor)$ is an $A$-deformation of a locally toric K\"ahler structure $(g_K, J_K, F, \tor)$ with $A \in \Lambda^2 \tor$ nondegenerate.  Then $(g, I, J)$ is symplectic-type with respect to two $\tor$-invariant symplectic forms, $F_+:=F$ and $F_- := F (I+J)(I-J)^{-1}$, with corresponding $\tor$-momenta $\mu^+$ and $\mu^-$ related by
\[ \mu^- = A^{-1} \nabla u + const, \qquad \mu^+ = A\nabla \phi + const, \]
where $u(x)$ is a symplectic potential for  $(g_K, J_K)$ and $\phi(y)$ is its Legendre transform.  Furthermore, $(g,-I, J, F_-)$ is an $A$-deformation of the toric K\"ahler structure $(g_K^-, J_K^-, F_-, \tor)$ with bi-vector $-A$ and symplectic potential $\phi(Ax)$.

In particular, if $(g, I, J)$ is a GKRS as above, then the corresponding K\"ahler metrics $(g_K^+, J_K^+, F_+, \tor)$ and $(g_K^{-}, J_K^{-}, F_-, \tor)$ are locally toric steady KRS, obtained from the metrics $g_K$ and $\tilde g_K^*$ defined above by pullback by the respective local coordinate maps $(t, \mu^+)$ and $(t, \mu^{-})$.
\end{prop}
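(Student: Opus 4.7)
First, by \eqref{J-poisson} the nondegeneracy of $A\in \Lambda^2 \tor$ is equivalent to that of the Hitchin Poisson tensor $\sigma$, so $I - J$ is invertible on $\mathring{M}$ and $F_- := -2g(I-J)^{-1}$ is a well-defined, $\tor$-invariant $2$-form. Its closedness follows from the symplectic-type generalized K\"ahler identities, and can in any case be read off directly from the coordinate expression derived below. To compute $F_-$ in the momentum-angle coordinates $(t,\mu)$, I would write $I-J$ as a block-antidiagonal map in the basis $\{\partial_{\mu_i}, \partial_{t_j}\}$ from \eqref{boulanger}, \eqref{I-matrix}, invert it, and compose with the metric \eqref{g}; after applying the linear-algebra identities \eqref{linear-algebra} to ${\bf\Psi} = G^K+A$ one obtains
\[
F_- = \sum_{i,j} (G^K A^{-1})_{ij}\, d\mu_i \wedge dt_j.
\]
Contracting with $X_k = \partial_{t_k}$ and using $G^K_{il} = u_{,il}$ yields $\iota_{X_k}F_- = -d\mu^-_k$ with $\mu^- = A^{-1}\nabla u + const$, and the Legendre identity \eqref{f:legendre} immediately gives the dual relation $\mu^+ = \nabla\phi\circ A + const$.

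Next, I would change coordinates from $(t,\mu)$ to $(t, \mu^-)$ via $\nabla u = A\mu^-$. The pullback of $F_-$ then becomes the standard Darboux form $\sum d\mu^-_i \wedge dt_i$, so $(t, \mu^-)$ is an angular/momentum system for the torus action on $(M, F_-)$. Under the same coordinate change the pullback of $g$ is precisely the metric $\tilde g$ considered just before Lemma~\ref{legendre+A}, which that lemma identifies with the $(-A)$-deformation $\tilde g^*$ of the locally toric K\"ahler metric $\tilde g_K^*$ having symplectic potential $\phi_A(x) = \phi(Ax)$ on $U_A$. To conclude that $(g, -I, J, F_-)$ arises as the $A$-deformation of $(\tilde g_K^*, J_K^-)$ with bivector $-A$ in the sense of Definition~\ref{d:diagonal}, I would expand \eqref{boulanger} and \eqref{I-matrix} for the new matrix ${\bf \Psi}_{\rm new} = \Hess(\phi_A) - A = -A(G^K)^{-1}A - A$ and verify that, after the coordinate change, these formulas reproduce the $(t, \mu^-)$-expressions of $J$ and of $-I$ respectively; the sign in $-I$ is forced by reading the conjugation identity \eqref{eq:IJconj} off $F_-$ in place of $F_+$.

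Finally, assuming $(g, I, J, f)$ is a GKRS, Theorem~\ref{t:Atypeconstruction} gives that $(g_K, J_K)$ is a locally toric steady KRS, so by Lemma~\ref{l:KRS} $u$ satisfies the drift Monge-Amp\`ere equation \eqref{f:KRS} with some constants $(b,c)\in \tor\times \tor^*$. The Legendre identity $\Hess(\phi) = \Hess(u)^{-1}$ together with $\langle b, x\rangle = \langle b, \nabla\phi\rangle$ at $y = \nabla u(x)$ converts this into the analogous equation for $\phi$ with the roles of $b$ and $c$ exchanged. A direct computation using $\nabla\phi_A(x) = -A\nabla\phi(Ax)$ and $\Hess(\phi_A) = -A\Hess(\phi)A$, together with the antisymmetry of $A$, then shows that $\phi_A$ satisfies \eqref{f:KRS} on $U_A$ with transformed constants $(-Ac,\, A^{-1}b) \in \tor \times \tor^*$, so by Lemma~\ref{l:KRS} the structure $(g_K^-, J_K^-)$ is indeed a locally toric steady KRS. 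The most delicate point is the matching of complex structures in the middle step (ensuring $-I$ rather than $I$ appears in the $(-A)$-deformation), which requires careful tracking of the sign conventions in \eqref{eq:IJconj} across the coordinate change; once this is settled, the rest of the argument reduces to the algebraic identities \eqref{linear-algebra} and Lemma~\ref{legendre+A}.
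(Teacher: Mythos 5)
Your proposal is correct and follows essentially the same route as the paper: nondegeneracy of $A$ gives nondegeneracy of $\sigma$ and hence the symplectic form $F_-$, the momenta are computed by contracting the coordinate expression of $F_-$ with the $\partial_{t_i}$ to get $\mu^- = A^{-1}\nabla u$, and the structure is re-expressed in the $(t,\mu^-)$ chart and identified, via the identity of Lemma~\ref{legendre+A} and the Legendre relation $\Hess(\phi)=\Hess(u)^{-1}$, as the $(-A)$-deformation of the K\"ahler metric with potential $\phi(Ax)$, with the KRS statement then following from Lemma~\ref{l:KRS} and the Legendre duality lemmas. The only differences are cosmetic (the paper substitutes directly into \eqref{boulanger}, \eqref{g}, \eqref{I-matrix} to read off $g$, $J$, $I$ in the new coordinates rather than quoting Lemma~\ref{legendre+A} wholesale, and there are some unimportant sign-convention discrepancies in $F_-$ and in the transformed soliton constants that the paper itself is not fully consistent about).
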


\begin{proof} As $A$ is assumed nondegenerate, we obtain by \eqref{J-poisson} that the Poisson tensor $\gs$ is nondegenerate.  From the general theory of nondegenerate GK structures (see e.g. \cite{ASnondeg}), we know that the structure is symplectic-type with respect to the symplectic form
\[ F_- = -2g(J-I)^{-1}.\]
Using the expressions  \eqref{boulanger}, \eqref{g} and \eqref{I-matrix} we compute
\[ \begin{split}
&\left(\frac{J-I}{2}\right)^{-1} = 
\sum_{i,j=1}^n \left(-[({\bf \Psi}^{-1}]^{a}]^{-1})^{a}_{ij}d\mu_i \otimes \frac{\partial}{\partial t_j} +(A^{-1})_{ij} dt_i \otimes \frac{\partial}{\partial \mu_j}\right) \\
&- d\mu_i^{-}=F_-\left(\frac{\partial}{\partial t_i} \right)= -g\left(\frac{J-I}{2}\right)^{-1}\left(\frac{\partial}{\partial t_i}\right) =  -\sum_{k,j=1}^n A^{-1}_{ij}u_{, jk} d\mu_k= - d\left(\sum_{j=1}^nA^{-1}_{ij}u_{,j}\right).
\end{split}\]
It follows that up to an additive constant,  $\mu^{-} = A^{-1} \nabla u$. 

Substituting back in \eqref{boulanger}, \eqref{g} and \eqref{I-matrix}, we express $g$, $J$ and $I$ in the coordinate system $(t, \mu^{-})$ as follows
\[ 
\begin{split} 
g &= \sum_{i,j=1}^n-\left(A\left({\rm Hess}(u)\right)^{-1}A\right)_{ij} d\mu^-_i d\mu^-_j + \left(\left({\rm Hess}(u) + A\right)^{-1} \right)^s_{ij} dt_idt_j. \\
J &= \sum_{i,j=1}^n -\left(A + A\left(({\rm Hess}(u))^{-1}\right) A\right)_{ij}   d\mu^{-}_i \otimes \frac{\partial}{\partial t_j} - (A+ A\left(({\rm Hess}(u))^{-1}\right)A)^{-1}_{ij} dt_i \otimes \frac{\partial}{\partial \mu^{-}_j} \\
I &= \sum_{i,j=1}^n -\left(A- A\left(({\rm Hess}(u))^{-1}\right)A\right)_{ij}d\mu^{-}_i \otimes \frac{\partial}{\partial t_j}- \left(A- A\left(({\rm Hess}(u))^{-1}\right)A\right)^{-1}_{ij} dt_i \otimes \frac{\partial}{\partial \mu^{-}_j}.
\end{split}
\]
Let $\phi(y)$ be the Legendre transform of $u(x)$, and let $\phi_A(x):= \phi(Ax)$: using that ${\rm Hess}(\phi)= {\rm Hess}(u)^{-1}$, we have
\[ {\rm Hess}(\phi_A) = -A({\rm Hess}(\phi)) A = -A\left({\rm Hess}(u)^{-1}\right) A.\]
It thus follows that $(J, -I)$ are obtained from the toric $F_-$-compatible K\"ahler metric
\[ g_K^- = \sum_{i,j=1}^n \left(\phi_A(\mu^-)\right)_{,ij} d\mu^-_i d\mu^{-}_j + \left(\phi_A(\mu^-)\right)^{,ij} dt_i dt_j \]
by an $A$-deformation with the bivector $-A$.  The above also shows that $\phi_A(x)$ is the corresponding local symplectic potential of $(g, -I, J, F_-, \tor)$.  The formula relating $\mu^+$ and $\phi$ follows by duality of the arguments above.
\end{proof}

\section{Global Theory of toric generalized K\"ahler-Ricci solitons} \label{s:global}

In this section we turn to the global aspects of the relationship between toric KRS and GKRS.  We first address the question of when a toric GKRS can be extended from a dense open set of a given manifold, showing an equivalence to the well-known Abreu-Guillemin boundary conditions, and record some further geometric consequences in this setting.  Next we recall some elementary facts about length metric spaces and then give the proof of Theorem \ref{t:completeness}.  We then prove some properties of weighted scalar curvatures on complete GKRS, and illustrate the theory in the case of the Gibbons-Hawking ansatz.

\subsection{Extension theory} \label{ss:extension}

The first question to address is when K\"ahler and generalized K\"ahler structures related by an $A$-deformation can be extended from $\mathring{M}$ to $M$.  In the compact Dezlant case, the equivalence of the K\"ahler and generalized K\"ahler metrics extending is established for $n = 2$ in (\cite{boulanger2019toric} Theorems 10 and 11) and in (\cite{wang2022toric} Theorems 4.9 and 4.11) in general, based on a local argument in (\cite{apostolov2004hamiltonian} Lemma~2).  We drop here both the compact and Delzant hypotheses, and are abel to adapt the arguments of \cite{wang2022toric} to our local toric setup.

\begin{prop} \label{p:complete} Suppose $(F, g, I, J, \tor)$ is an $A$-deformation of a locally toric K\"ahler structure $(F, g_K, J_K, \tor)$ defined on $\mathring{M}$.  Suppose $(F, \tor)$ extends smoothly to a manifold $M$ such that $\mathring{M}$ is a dense open subset in $M$.  Then $(g_K, J_K)$ extends smoothly to $M$ if and only if $(g, I, J)$ does.  In this case, one has $g_K \geq g$.  In particular, if $(g, I, J)$ is defined on $M$ and $g$ is complete Riemannian metric on $M$, then $g_K$ is also a complete Riemannian metric on $M$.
\end{prop}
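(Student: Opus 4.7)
The plan is to use Lemma~\ref{l:Atypemetriccomparison} as the technical hinge, rewriting \eqref{extension} in the form
\[ g_K = g + g\!\left(A \tfrac{(I+J)}{2} d\mu,\; A \tfrac{(I+J)}{2} d\mu\right). \]
The tensor added to $g$ on the right-hand side is the pointwise squared $g$-norm of a torus-valued 1-form (as spelled out in the remark following Lemma~\ref{l:Atypemetriccomparison}), hence nonnegative, yielding the inequality $g_K \geq g$ immediately on $\mathring{M}$.  For the backward direction of the extension equivalence, assume $(g, I, J)$ extends smoothly to $M$.  Since $(F, \tor)$ is globally defined, the moment map $\mu$ and the fundamental vector fields generating $\tor$ are smooth on all of $M$; together with the constant bivector $A \in \Lambda^2 \tor$ and the extended $I, J$, every ingredient on the right-hand side of the displayed formula is smooth on $M$.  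The resulting extension of $g_K$ is automatically positive definite, being bounded below by the positive-definite $g$.  The K\"ahler complex structure $J_K$ then extends via the musical identity $J_K = g_K^{-1} F$, using nondegeneracy of $g_K$ and smoothness of $F$.

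For the forward direction, assume $(g_K, J_K)$ extends smoothly to $M$.  The $A$-deformation construction of Section~\ref{ss:Atransform} is algebraic in $(g_K, J_K, F)$ and the constant bivector $A$: formulas \eqref{boulanger}, \eqref{I-matrix} and \eqref{g} express $J, I, g$ pointwise in terms of ${\bf \Psi} = {\bf G}^K + A$, whose symmetric part is $\Hess(u)$ (encoded in $g_K$) and whose antisymmetric part is the constant $A$.  Away from $\mathring{M}$ the torus stabilizers jump and ${\bf G}^K$ diverges in the angular-momentum coordinates, but the Abreu--Guillemin boundary conditions guaranteeing smoothness of $g_K$ translate through the algebraic formulas, because the constant shift by $A$ does not affect the $\Hess(u)$-dominated boundary behavior.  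I would deduce this by adapting the local calculation of (\cite{apostolov2004hamiltonian} Lemma~2) used in (\cite{wang2022toric} Theorems 4.9 and 4.11) for the compact Delzant setting to the present noncompact local situation; since that argument is local in nature it should transfer with essentially no change.

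For the completeness statement I would invoke Hopf--Rinow twice.  The inequality $g_K \geq g$ on $M$ implies $d_{g_K} \geq d_g$, hence $\overline{B}_{g_K}(p,R) \subset \overline{B}_g(p,R)$ for every $p \in M$ and $R > 0$.  Completeness of $g$ and Hopf--Rinow make $\overline{B}_g(p,R)$ compact; since $\overline{B}_{g_K}(p,R)$ is closed in the manifold topology (the Riemannian distance $d_{g_K}$ is continuous), it is a closed subset of a compact set and therefore compact.  Hopf--Rinow applied in reverse direction then gives completeness of $g_K$.

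The principal obstacle is the forward direction of the extension equivalence: verifying that the algebraic $A$-deformation formulas produce smooth tensors across the boundary set $M \setminus \mathring{M}$ where the torus action degenerates.  One must control inversions of matrices like $\Hess(u) + A$ as $\Hess(u)$ blows up in the Abreu--Guillemin sense, and this is where I expect the bulk of the technical work to lie, carried out either by direct reference to the earlier compact-case arguments or by a careful local reproduction near each face of the symplectic manifold-with-corners image of $\mu$.
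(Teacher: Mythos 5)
Your handling of the inequality $g_K \geq g$, of the implication ``$(g,I,J)$ extends $\Rightarrow (g_K,J_K)$ extends'' via \eqref{extension}, and of the completeness statement via Hopf--Rinow matches the paper and is correct. The genuine gap is in the opposite implication, which you yourself flag as ``the principal obstacle'': you propose to push the Abreu--Guillemin asymptotics of $\Hess(u)$ through the algebraic $A$-deformation formulas by adapting the compact Delzant arguments, but you do not carry this out, and moreover the proposition assumes only that $(F,\tor)$ extends smoothly to some manifold $M$ containing $\mathring{M}$ densely --- no Delzant polytope or Abreu--Guillemin boundary behaviour is available at this stage (those enter only later, in Proposition~\ref{p:global}), so the strategy of ``translating boundary conditions through the formulas'' has nothing to latch onto here. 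The paper's proof avoids all boundary asymptotics by invoking the second identity of Lemma~\ref{l:Atypemetriccomparison}, equation \eqref{extension-K}, which writes $g-\be-g_K$ entirely in terms of quantities that manifestly extend: the Gram matrix $({\bf \Psi}^{s})^{-1}_{ij}=g_K(X_i,X_j)$ (smooth on $M$ since $g_K$ and the $X_i$ are), the one-forms $d\mu_i=-\iota_{X_i}F$ and $J_K d\mu_i$, and the constant bivector $A$. The only inversion needed is of ${\rm Id}+({\bf \Psi}^{s})^{-1}A$, controlled by the pointwise bound $\det\big({\rm Id}+({\bf \Psi}^{s})^{-1}A\big)=\det\big({\rm Id}+({\bf \Psi}^{s})^{-1/2}A({\bf \Psi}^{s})^{-1/2}\big)\geq 1$, valid because the conjugated matrix is antisymmetric. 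One then recovers $J$ from $-FJ=g-\be$ and $I$ as the $F$-conjugate of $J$. This is the key mechanism missing from your plan.

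A second omission in the same direction: even once $g$, $I$, $J$ are shown to extend as smooth tensors, you must verify that $g$ stays positive definite on $M\setminus\mathring{M}$; the inequality $g\leq g_K$ provides no lower bound, and a priori $g$ could degenerate where the torus action does. The paper closes this by computing $\big(\tfrac{I+J}{2}\big)^{-1}+J_K=-\sum_{i,j}\big(A({\bf \Psi}^{s})^{-1}A\big)_{ij}\,d\mu_i\otimes X_j$, which extends smoothly, so that the identity $F=-2g(I+J)^{-1}$ forces both $g$ and $I+J$ to remain nondegenerate on all of $M$. Without some such argument the extension of the generalized K\"ahler structure is not established.
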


\begin{proof}
Suppose that $(g,I,J)$ is defined on $M$.  We use \eqref{extension}, which holds on $\mathring{M}$,  and notice that the expression
\[\sum_{i,j=1}^n \Big(A({\bf \Psi}^{-1})^s A\Big)_{ij}\left(\frac{I+J}{2}\right) d\mu_i\left(\frac{I+J}{2}\right)d\mu_j  \]
extends to $M$ because $({\bf \Psi}^{-1})_{ij}^s= g(X_i, X_j)$ is a matrix with smooth coefficients on $M$, $A$ is a constant matrix, and the 1-forms $d\mu_i = -F^{-1}(X_i)$ are defined on $M$ by our assumption.

If $(g_K, J_K)$ extends to $M$, we now use formula \eqref{extension-K} which holds on $\mathring{M}$. As $\left({\bf \Psi}^s\right)^{-1}_{ij} = g_K(X_i, X_j)$ is extendable, we conclude as before that the tensor $g-b$ is extendable as soon as the matrix  $\left({\rm Id} +\left({\bf \Psi}^{s}\right)^{-1}A\right)$ (which clearly extends to $M$) remains nondegenerate. Following the proof of (\cite{wang2022toric}, Theorem 4.11), this is shown to be true by the following observation (which holds on $\mathring{M}$):
\[ 
\begin{split}
    \det \left({\rm Id} +\left({\bf \Psi}^{s}\right)^{-1}A\right) &= \det\left(({\bf \Psi}^s)^{-\tfrac{1}{2}}\right) \det\left({\bf \Psi}^s + A\right) \det\left(({\bf \Psi}^s)^{-\tfrac{1}{2}}\right) \\
    &= \det\left({\rm Id} + ({\bf \Psi}^s)^{-\tfrac{1}{2}}A({\bf \Psi}^s)^{-\tfrac{1}{2}} \right) \geq 1,
    \end{split}\]
where for the last inequality it is used that $({\bf \Psi}^s)^{-\tfrac{1}{2}}A({\bf \Psi}^s)^{-\tfrac{1}{2}} $ is an anti-symmetric matrix. This inequality holds on $M$ by continuity.  This shows that the tensor $g-b$  extends to $M$. In particular, both the symmetric part $g$ and the skew part $b$ are extended to $M$. As $F$ is symplectic on $M$ and $-FJ= g - b$, we get a smooth extension of $J$, which must be an integrable almost complex structure on $M$ by continuity. The almost complex structure $I$ is the $F$-conjugate of $J$, so it extends too.

It remains to obtain that $g>0$, which follows by showing that $(I+J)^{-1}$ extends smoothly.  To this end, we compute from \eqref{momentum-angle}, (\ref{boulanger}) and \eqref{I-matrix},
 \[ \left(\frac{I+J}{2}\right)^{-1} + J_K = \sum_{i,j=1}^n \left((({\bf \Psi}^{-1})^{s})^{-1} - {\bf \Psi}^s\right)_{ij} d\mu_i \otimes X_j. \]
 This tensor extends smoothly provided the matrix $\left((({\bf \Psi}^{-1})^{s})^{-1} - {\bf \Psi}^s\right)_{ij}$ extends. Using \eqref{linear-algebra}, we have
 \[(({\bf \Psi}^{-1})^{s})^{-1}={\bf \Psi}^T({\bf \Psi}^s)^{-1} {\bf \Psi}. \]
Thus
\[
\begin{split}
(({\bf \Psi}^{-1})^{s})^{-1} - {\bf \Psi}^s &= {\bf \Psi}^T({\bf \Psi}^s)^{-1} {\bf \Psi} - {\bf \Psi}^s \\
&=({\bf \Psi}^{s}-A)({\bf \Psi}^s)^{-1}({\bf \Psi}^s + A) - {\bf \Psi}^s \\
&=({\rm Id} - A({\bf \Psi}^s)^{-1})({\bf \Psi}^s + A) -{\bf \Psi}^{s} \\
&=-A({\bf \Psi}^{s})^{-1}A.
\end{split}\]
The final line is smooth on $M$ as $({\bf \Psi}^s)^{-1}_{ij} = g_K(X_i, X_j)$ is smooth and $A=(A_{ij})$ is constant.  With this, the identity $F = - 2g(I+J)^{-1}$ shows that both $g$ and $(I+J)$ are everywhere nondegenerate over $M$, so $g>0$.

 Finally, the inequality $g_K \geq g$ follows by \eqref{extension}, noting that the final term is non-positive on $\mathring{M}$ (and hence on $M$ by continuity), as $\left(({\bf\Psi}^{-1})^s_{ij}\right)=\left(g(X_i, X_j)\right) \geq 0$.  The completeness statement follows directly by this inequality and the Hopf-Rinow theorem.
 \end{proof}

Having established this inequality, we turn now to the question of when either structure is extendable.  We assume that $\tor$ integrates to a compact real torus $\T$-action and recall a fundamental definition:

\begin{defn}\label{global-assumption} We say that a toric symplectic manifold $(M, F, \T)$ with moment map $\mu: M \to \Delta \subset \tor^*$, is \emph{\standard} if
\begin{enumerate}
\item $\mu : M \to \Delta$ is proper,
\item $\Delta$ is a closed convex simple polytope in $\tor^*$ with a finite number of vertices.
\end{enumerate}
\end{defn}

\begin{rmk}
As in the compact case, such symplectic manifolds are classified~\cite{KL} by the Delzant construction applied to $(\Delta, {\bf L})$, where $\Delta$ is a simple polytope admitting  a Delzant labeling  ${\bf L}=\{L_1, \ldots, L_d\}$, i.e. for each vertex $v\in \Delta$, the affine linear functions $\{L^{v}_1, \ldots, L^v_{n}\} \in {\bf L}$ vanishing at $v$ define a basis $\{dL_1^v, \ldots, dL^v_n\}$ of a lattice $\Lambda \subset \tor$.  In this case, we have an induced canonical K\"ahler metric $(g_c, J_c, F)$ on $M$,  obtained as the quotient of the flat K\"ahler structure on ${\mathbb C}^d$, and a $J_c$-holomorphic action of a complex torus $\T_{\C}\cong ({\C}^*)^n$, containing $\T$ as a real form, see e.g. \cite{guillemin1994kaehler, abreu2001kahler, apostolov-notes}.  We shall refer to $J_c$  as the \emph{standard} complex structure on $(M, F, \T)$. Notice that $J_c$ is equivariantly isomorphic to the algebraic complex structure on $M$,  associated to the fan determined by $(\Pol, {\bf L})$, see \cite{cifarelli2022uniqueness}.
\end{rmk}

In the Delzant setting,  we can give a complete characterization of the extendability of toric generalized K\"ahler structures in terms of the symplectic potential $u$, as in the Abreu-Guillemin theory.  Indeed, by Proposition~\ref{p:complete}, this is equivalent to the extendability of the corresponding K\"ahler structure. In the compact case, this is studied by M. Abreu in \cite{abreu2001kahler}, and in \cite{apostolov2004hamiltonian}, where the arguments are local around any point at the boundary. The latter approach  was adapted to the non-compact setup, assuming the momentum map $\mu$ is proper (cf. \cite{sena2021uniqueness}).

\begin{prop}\label{p:global}  Assuming $(F, g, I, J, \T)$ is \standard, the data is globally defined on $(M, F, \T)$ if and only if the symplectic potential $u(x)$ on $\mathring{\Pol}$ satisfies the \emph{Abreu--Guillemin boundary conditions}~\cite{abreu2001kahler}: 
\begin{enumerate}
\item $u(x)-u_c(x)$ is smooth on $\Pol$,
\item $\frac{\det\left({\rm Hess}(u)\right)}{\det\left({\rm Hess}(u_c) \right)}$ is smooth and positive on $\Pol$,
\end{enumerate}
where $u_c(x):= \tfrac{1}{2}\sum_{j=1}^{d} L_j(x) \log L_j(x)$ is the symplectic potential of the standard K\"ahler structure $(g_c, J_c)$ on $(M, F)$.
\end{prop}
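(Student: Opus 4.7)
The plan is to reduce to a purely Kähler statement and then invoke the local Abreu--Guillemin boundary theory. By Proposition~\ref{p:complete}, applied to the $A$-deformation relating $(F, g_K, J_K, \T)$ on $\mathring{M}$ with $(F, g, I, J, \T)$, the generalized Kähler data extends smoothly to $M$ if and only if the associated locally toric Kähler structure $(F, g_K, J_K, \T)$ does. Thus it suffices to prove the equivalence between conditions (1)-(2) and the smooth extendability of a toric Kähler metric on $(M, F, \T)$ given via the symplectic potential $u$ on $\mathring{\Pol}$.

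Next I would argue locally around each boundary point $p \in M \setminus \mathring{M}$. Because $(M, F, \T)$ is Delzant in the sense of Definition~\ref{global-assumption}, the Delzant--Karshon--Lerman classification produces equivariant symplectic charts modeled on a neighborhood of the origin in a standard Kähler reduction $\mathbb{C}^k \times (\mathbb{C}^*)^{n-k}$, where $k$ is the codimension of the face of $\Pol$ containing $\mu(p)$. In such a chart the moment map is governed by a distinguished subset $\{L_{i_1}, \ldots, L_{i_k}\}$ of the labeling ${\bf L}$ which vanishes at $\mu(p)$, and the standard symplectic potential $u_c$ takes the model form $\tfrac{1}{2}\sum_{j=1}^d L_j \log L_j$ which produces the flat Kähler metric $g_c$ on $\mathbb{C}^k \times (\mathbb{C}^*)^{n-k}$. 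The key computation is that, setting $u = u_c + \phi$ with $\phi$ smooth on $\Pol$, the symplectic/angular description
\[
g_K = \langle d\mu, \mathrm{Hess}(u), d\mu\rangle + \langle dt, (\mathrm{Hess}\, u)^{-1}, dt\rangle
\]
extends smoothly across the face precisely when (i) $\mathrm{Hess}(u_c) + \mathrm{Hess}(\phi)$ has the same boundary degeneration as $\mathrm{Hess}(u_c)$ and (ii) its inverse $(\mathrm{Hess}\, u)^{-1}$ extends smoothly to the face. A direct matrix computation shows this holds iff $\det(\mathrm{Hess}\, u)/\det(\mathrm{Hess}\, u_c)$ is smooth and strictly positive up to the face, because the degeneration of $\mathrm{Hess}(u_c)$ along $\{L_{i_j}=0\}$ is of a precisely prescribed order.

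Conversely, when (1) and (2) hold, the same local computation yields smoothness and positive-definiteness of $g_K$ in each Delzant chart, and the integrability of $J_K$ follows from the Hessian condition on $u$ which is inherited from its validity on $\mathring{\Pol}$. Since these charts cover $M$ by the properness of $\mu$, the K\"ahler metric $(g_K, J_K)$ extends globally. This is precisely the argument carried out in \cite{abreu2001kahler, apostolov2004hamiltonian} in the compact setting and extended to noncompact proper Delzant images in \cite{sena2021uniqueness}, and it adapts verbatim to our situation.

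The main obstacle, and where all the technical content lies, is the local matrix analysis around a boundary face: namely, verifying that the smoothness and positivity of $\det(\mathrm{Hess}\, u)/\det(\mathrm{Hess}\, u_c)$, together with the smoothness of $u - u_c$, is exactly what is needed for the boundary-degenerate symmetric tensor $\mathrm{Hess}(u)$ to invert into a smooth tensor on the angular side and vice versa. This is a nontrivial observation: although $\mathrm{Hess}(u_c) \to \infty$ on the face, its inverse extends, and the ratio condition (2) is the correct way to track that the perturbation $\mathrm{Hess}(\phi)$ does not destroy this compensation. With the local step established, the global extension is automatic and the proposition follows.
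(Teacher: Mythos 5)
Your proposal is correct and follows essentially the same route as the paper: the paper's (implicit) proof is exactly the reduction to the K\"ahler case via Proposition~\ref{p:complete}, followed by an appeal to the Abreu--Guillemin boundary analysis of \cite{abreu2001kahler, apostolov2004hamiltonian} as adapted to the noncompact proper setting in \cite{sena2021uniqueness}. The additional sketch you give of the local matrix computation near a face is the content of those cited references rather than a new ingredient, so there is no substantive difference in approach.
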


\begin{rmk} Under the Delzant hypothesis, observe by \eqref{J-poisson} that $\gs_J$ vanishes on the pre-image of any vertex of $\Pol$,  as these are fixed points for the $\T$-action.  In particular $\gs_J$ is not invertible, and thus neither is $\gs$, and so $(g, I, J)$ cannot be nondegenerate at such a point.  In particular, if $M$ is $4m$-dimensional and $A\in \Lambda^2 \tor$ is a nondegenerate element, then   $(g, I, J)$ is a nondegenerate GK structure on $\mathring{M}$ such that the symplectic form $F=F_+=-2g(I+J)^{-1}$ extends to $M$ whereas the symplectic form $F_-= -2g(I-J)^{-1}$ on $\mathring{M}$ does not.  Notice in particular that the Legendre duality discussed in Proposition~\ref{GK-duality} does not work globally.
\end{rmk}

\begin{prop} (cf. \cite{apostolov2023hamiltonian} Lemma 2.12) \label{p:polytopecond} Let $(M, F, g, I, J, \T)$ be a toric symplectic-type GKRS such that $(M, F, \T)$ is \standard, and the data defined over $\mu^{-1}(\mathring{\Delta})$ satisfies the conditions of Theorem \ref{t:Atypeconstruction}.
Then $c \in \tor^*$ is the unique constant such that \begin{align} \label{eq: polytope faces condition}
    \IP{\N L_j, c} = 2
\end{align}
for any $L_j \in \bf{L}$.
\end{prop}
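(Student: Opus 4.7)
The plan is to extract the singular boundary behavior of each term in the KRS equation from Lemma~\ref{l:KRS} along the facets of $\Pol$, using the Abreu--Guillemin asymptotics of the symplectic potential. Since by hypothesis $(g, I, J)$ is globally defined on $M$, Proposition~\ref{p:complete} guarantees that the associated K\"ahler-Ricci soliton $(g_K, J_K)$ extends smoothly as well, and Proposition~\ref{p:global} will then imply that the symplectic potential $u$ satisfies the Abreu--Guillemin boundary conditions: $u - u_c$ is smooth on $\Pol$, and $\det \Hess(u)/\det \Hess(u_c)$ is smooth and positive on $\Pol$, where $u_c = \tfrac{1}{2}\sum_i L_i \log L_i$.

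Next I would fix a facet $\{L_j = 0\}$ and work near a point in its relative interior, so that no other $L_i$ vanishes there. From the explicit formula for $u_c$ together with the smoothness of $u - u_c$, I expect the asymptotics
\[
\nabla u = \tfrac{1}{2}(\log L_j)\,\nabla L_j + O(1), \qquad \log \det \Hess(u) = -\log L_j + O(1),
\]
with $O(1)$ terms that extend smoothly up to the facet. The first follows from $\nabla u_c = \tfrac{1}{2}\sum_i (\log L_i + 1)\nabla L_i$, while the second comes from the observation that in affine coordinates with $L_j = x_1$ the Hessian of $u_c$ has a single singular entry $(u_c)_{11} \sim 1/(2L_j)$, the smoothness of the determinant ratio then transferring this asymptotic from $u_c$ to $u$.

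Substituting these expansions into the KRS identity $\langle b, x\rangle = \langle c, \nabla u\rangle + \log \det \Hess(u) + const$ and using that the left-hand side is affine, hence bounded up to the facet, the singular contributions on the right must cancel, yielding
\[
\Big(\tfrac{1}{2}\langle c, \nabla L_j\rangle - 1\Big)\log L_j = O(1) \quad \text{as } L_j \to 0^+.
\]
Since $\log L_j$ is unbounded, the coefficient must vanish, forcing $\langle c, \nabla L_j\rangle = 2$.

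Uniqueness of $c$ will then follow from the Delzant hypothesis: at any vertex of $\Pol$ the $n$ incident conormals $\nabla L_{j_k}$ form a basis of $\tor$, so the $n$ equations $\langle c, \nabla L_{j_k}\rangle = 2$ determine $c \in \tor^*$ uniquely. The main technical point is the middle step, where the Abreu--Guillemin boundary structure must be invoked to control $\nabla u$ and $\log \det \Hess(u)$ with precision up to the facet; the remaining arguments amount to elementary bookkeeping in the soliton equation.
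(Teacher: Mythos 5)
Your argument is correct and follows essentially the same route as the paper: the paper's proof invokes the Abreu--Guillemin boundary conditions together with the Monge--Amp\`ere equation $\log\det\Hess(u)+\IP{c,\N u}=\IP{b,\mu}+const$ and then cites \cite{apostolov2023hamiltonian} Lemma 2.12, which is precisely the facet-by-facet matching of the $\log L_j$ singularities that you carry out explicitly. Your write-up simply fills in the details of that cited lemma (including the uniqueness of $c$ from the Delzant vertex condition), so there is nothing to correct.
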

\begin{proof} By the Delzant hypothesis we know that there exists a symplectic potential $u$ for the structure satisfying the boundary conditions of Proposition \ref{p:global}.  By the proof of Theorem \ref{t:Atypeconstruction} we know that this symplectic potential satisfies a Monge-Amp\`ere equation
\begin{align*}
    \log \det \Hess(u) + \IP{c, \N u} = \IP{b, \mu} + const.
\end{align*}
The result follows by noting the boundary conditions and the fact that the right hand side is smooth as in (\cite{apostolov2023hamiltonian} Lemma 2.12).
\end{proof}

\begin{prop}\label{p:standard} Let $(g_K, J_K)$ be a toric K\"ahler structure defined on a Delzant symplectic manifold $(M, F, \T)$ with a symplectic potential $u$ defined on $\mathring{P}$, fix $A\in \wedge^2 \tor$, and let $(g, I, J)$ be $A$-deformation of $(g_K, J_K)$.  
\begin{enumerate}
    \item If $|(\nabla u)(x)| \to \infty$ as $|x| \to \infty$, then $J_K$ is equivariantly biholomorphic to the standard complex structure $J_c$ on  $(M, F, \T)$. 
    \item If $|(\nabla u)(x) + Ax|$ (resp. 
$|(\nabla u)(x) - Ax|) \to \infty$ as $|x|\to \infty$, then 
$J$ (resp. $I$)  is equivariantly biholomorphic to the standard complex structure $J_c$ on  $(M, F, \T)$.

\end{enumerate}
\end{prop}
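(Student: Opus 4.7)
The plan is to construct, in each case, explicit global log-holomorphic coordinates on $\mathring{M}$, identify them with the corresponding coordinates for $J_c$, and then extend the resulting $\T$-equivariant biholomorphism across the preimages of the faces of $\Pol$ using the Abreu--Guillemin boundary analysis.

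For (1), set $y:=\nabla u:\mathring{\Pol}\to\tor$. Since $\Hess(u)$ is positive definite, $y$ is a local diffeomorphism. The Abreu--Guillemin condition of Proposition~\ref{p:global} gives $u-u_c\in C^{\infty}(\Pol)$, and $\nabla u_c=\tfrac{1}{2}\sum_j(\log L_j+1)\nabla L_j$ blows up as $x$ approaches any facet of $\Pol$. Combined with the standing hypothesis $|\nabla u(x)|\to\infty$ as $|x|\to\infty$, this shows that $y$ is proper, hence a surjective covering map. Since $\mathring{\Pol}$ is convex and $\tor$ is simply connected, $y$ is a global diffeomorphism onto $\tor$; the same argument yields that $y_c:=\nabla u_c$ is a diffeomorphism onto $\tor$. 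A direct calculation from \eqref{momentum-angle} gives $J_K(\partial_{y_i})=\partial_{t_i}$, so $(t,y)$ are log-holomorphic coordinates for $J_K$ on $\mathring{M}\cong\mathring{\Pol}\times\T$, and similarly $(t,y_c)$ for $J_c$. The map
\[
\Phi(t,\mu):=\bigl(t,\,y_c^{-1}\circ y(\mu)\bigr)
\]
is therefore a $\T$-equivariant biholomorphism from $(\mathring M,J_K)$ to $(\mathring M,J_c)$.

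To extend $\Phi$ smoothly to $M$, I would invoke the local Abreu--Guillemin normal form at the faces of $\Pol$, as developed in \cite{abreu2001kahler,apostolov2004hamiltonian} and adapted to the proper non-compact setting in \cite{sena2021uniqueness}. Writing $y=y_c+\nabla(u-u_c)$ with $u-u_c\in C^{\infty}(\Pol)$, the log-singular leading behavior of $y$ and $y_c$ coincides at every facet, so $y_c^{-1}\circ y$ extends as a smooth diffeomorphism of $\Pol$ preserving the face stratification, and this lifts to a smooth $\T$-equivariant extension of $\Phi$ to all of $M$.

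Part (2) follows from the same scheme, with $y$ replaced by $y^J:=\nabla u+A\mu$ (for $J$) or $y^I:=\nabla u-A\mu$ (for $I$). By Lemma~\ref{pluriharmonic} these are $\tor$-invariant $J$- (resp.\ $I$-)pluriharmonic, and a direct calculation from \eqref{boulanger} and \eqref{I-matrix} yields $J(\partial_{y^J_i})=\partial_{t_i}$ and $I(\partial_{y^I_i})=\partial_{t_i}$, so $(t,y^J)$ and $(t,y^I)$ are log-holomorphic for $J$ and $I$ respectively. The differential $dy^J=\Hess(u)+A$ is nondegenerate because its symmetric part is positive definite and its skew part is antisymmetric, so $y^J$ is a local diffeomorphism; the hypothesis $|\nabla u(x)+Ax|\to\infty$, together with $|\nabla u|\to\infty$ at facets and the boundedness of $A\mu$ on bounded sets, shows $y^J$ is proper, and the covering-space argument above gives that $y^J:\mathring{\Pol}\to\tor$ is a global diffeomorphism. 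Since $A\mu$ is smooth on $\Pol$, the boundary-extension argument for $\Phi$ carries over unchanged; the case of $I$ is identical. The main technical obstacle is precisely this boundary extension: the asymptotic hypothesis at infinity is exactly what is needed for $y$ (resp.\ $y^J$, $y^I$) to provide a global log-holomorphic chart onto $\tor$, reducing the global equivariant biholomorphism problem to the standard Abreu--Guillemin comparison at the faces.
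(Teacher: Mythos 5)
Your overall strategy is the paper's: show that the relevant momentum-type map ($\nabla u$, resp.\ $\nabla u\pm A\mu$) is a proper local diffeomorphism of $\mathring{\Pol}$ onto $\tor$, hence a global diffeomorphism; use it to produce $\tor$-invariant (log-)holomorphic coordinates via Lemma~\ref{pluriharmonic}; and extend across $\mu^{-1}(\partial\Pol)$ using the Abreu--Guillemin boundary behaviour. Your properness argument (blow-up of $\nabla u_c$ at facets plus the hypothesis at infinity, then the covering-space trick over the simply connected target $\tor$) is a clean variant of the paper's injectivity-plus-closed-image argument, and your identification of $(t,y^J)$, $(t,y^I)$ as log-holomorphic charts is correct.

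There is, however, one genuine gap: your biholomorphism $\Phi(t,\mu)=(t,\,y_c^{-1}\circ y(\mu))$ requires $y_c=\nabla u_c$ to be a global diffeomorphism of $\mathring{\Pol}$ onto $\tor$, and you assert this follows ``by the same argument.'' It does not: the same argument uses the hypothesis $|\nabla u(x)|\to\infty$ as $|x|\to\infty$, which is assumed for $u$ but not for $u_c$, and it can fail for $u_c$ on an unbounded polytope. For instance, for $\Pol=[0,\infty)\times\R$ one has $\nabla u_c=\bigl(\tfrac{1}{2}(\log x_1+1),0\bigr)$, whose image is a line, so $y_c^{-1}$ is undefined on most of $y(\mathring{\Pol})=\tor$ even though $u=u_c+\tfrac{1}{2}x_2^2$ satisfies your hypothesis. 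Even for pointed polytopes, where surjectivity of $\nabla u_c$ does hold, it needs a separate argument rather than a citation of the hypothesis on $u$. The paper sidesteps this entirely: in case (1) it observes that surjectivity of $\nabla u$ is equivalent to completeness of the fields $J_KX_v$, i.e.\ to the extension of $\T$ to a holomorphic $(\C^*)^n$-action, and then invokes (\cite{cifarelli2022uniqueness}, Lemma 2.14); in case (2) it uses the single global chart $z=e^{\nabla u+A\mu+\sqrt{-1}t}:(\mathring{M},J)\to(\C^*)^n$ and extends it to $\C^n$-charts near each vertex via the boundary conditions and (\cite{ASU2}, Lemma 3.6), checking that the transition maps reproduce the fan atlas of $J_c$. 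Your proof can be repaired by adopting either of these routes (or by proving properness of $\nabla u_c$ under an explicit pointedness assumption); as written, the comparison map with $J_c$ is not well defined. Your boundary-extension step is asserted at roughly the same level of detail as the paper's appeal to the cited local computations, so I would not count that as an additional gap.
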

\begin{proof} (1) We first show that 
\begin{equation}\label{C-star}
\nabla u : \mathring{P} \to \tor \,  \, \, \textrm{is a diffeomorphism.}
\end{equation}
This is standard and is established in the bounded polytope case for instance in (\cite{AAS}, Lemma A.1). We record the argument here, suitably adapted to the unbounded case, for the reader's convenience.  By the strict convexity of $u$, $x \to \nabla u (x)$ is a local diffeomorphism  from $\mathring{\Pol}$ to $\tor \cong \R^n$. It is injective, by using the convexity of $\mathring{\Pol}$ and that $\langle \nabla u(p(t)), \dot p(t) \rangle$ is strict monotone along a line segment $p(t)$ joining two points in $\mathring{\Pol}$. To establish the surjectivity, we  show that ${\rm Im}(\nabla u)$ is closed (it is already open and non-empty). To this end, if $q_k := \nabla u(p_k) \to q \in \R^n$, then we know that $p_k$ stay bounded by the assumption $\lim_{|x| \to \infty} |\nabla u(x)| = \infty$, so we can extract a subsequence $p_k \to p \in \Pol$. If $p \in \partial \Pol$, this will contradict the Abreu-Guillemin boundary condition (1) of Proposition~\ref{p:global},  which implies that $\lim_{x \to \partial \Pol} |\nabla u(x)| = \infty$. Thus, $p \in \mathring{\Pol}$ and thus ${\rm Im}(\nabla u)$ is closed. 

We next show that \eqref{C-star} implies that $J_K$ is standard.  One way to see this is to notice that \eqref{C-star} is in turn equivalent to the completeness of the vector fields $JX_v$ where $v \in \tor$.  This is because $\nabla u + \sqrt{-1} t$ defines $J_K$-holomorphic coordinates on $\mathring{M}$ such that the holomorphic vector fields $-J_K X_v + \sqrt{-1} X_v, \, v\in \tor$ become the standard vector fields in these coordinates.  The completeness of $JX_v$ is in turn equivalent to the extension of $\T$ to a holomorphic $(\C^*)^n$-action, in which case $J_K$ is standard by (\cite{cifarelli2022uniqueness} Lemma 2.14) (cf. also \cite{abreu2012scalar} Remark 2.11).  Alternatively, we can use \eqref{C-star} directly, as in \cite{apostolov2023hamiltonian} Remark 2.8, to build a $(\C^*)^n$-equivariant atlas on $(M, J_K)$ isomorphic to the one given by the fan defined by $\Pol$.

(2) One shows  similarly to the case (a)  that the map
\[ (\nabla u)(x) + A x : \mathring{\Pol} \to \R^n\]
is a diffeomorphism.  With this at hand, using Proposition~\ref{pluriharmonic}, we can define a $J$-holomorphic map $z:= e^{\left((\nabla u)(\mu) + A\mu + \sqrt{-1} t\right)}$ which, with respect to a suitable lattice basis,  defines an equivariant biholomorphism $\beta_v : (\mathring{M}, J) \cong (\C^{*})^n$ around each vertex of $\Pol$.  Thus by the proof of (\cite{ASU2} Lemma~3.6), the boundary conditions in Proposition~\ref{p:global} yield the extension of each $\beta_v$ to an equivariant $\C^n$-chart. The way these charts change varying the vertices recovers the canonical equivariant holomorphic atlas of $J_c$, obtained from the fan associated to $\Pol$. The argument for $I$ is similar. 
\end{proof}
An inspection of the explicit construction in \cite{apostolov2023hamiltonian} of Cao-type KRSs on $\C^n$ shows that the condition (2) of Proposition~\ref{p:standard} holds. Thus, Propositions~\ref{p:complete} and \ref{p:standard} above yield:
\begin{cor} $\R^{2n}$ admits complete symplectic-type rank 2 GKRSs $(g, I, J)$ such that $(\R^{2n}, I) \cong (\R^{2n}, J) \cong \C^n$.
\end{cor}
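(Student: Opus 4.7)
The plan is to combine the existence of Cao-type complete toric steady K\"ahler-Ricci solitons on $\C^n$ constructed in \cite{apostolov2023hamiltonian} with the machinery already developed: Theorem~\ref{t:Atypeconstruction} to produce the GKRS, Theorem~\ref{t:completeness} to transfer completeness, and Proposition~\ref{p:standard} to identify the underlying complex manifolds. Concretely, view $\C^n = \R^{2n}$ as a Delzant symplectic manifold $(M,F,\T)$ associated to the standard unbounded polytope $\Pol=\R^n_{\geq 0}$ with its canonical labeling, and let $(g_K,J_K,f_K)$ be a Cao-type complete toric KRS on $M$, with symplectic potential $u$ on $\mathring{\Pol}$ and soliton constants $(b,c)\in \tor\times \tor^*$ as in Lemma~\ref{l:KRS}.

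Next, fix $A\in \Lambda^2 \tor$ that is nondegenerate and chosen generically so that $Ac$ is not parallel to $b$; this is possible whenever $n\geq 2$, since the space of $A\in \Lambda^2\tor$ satisfying $Ac \parallel b$ is a proper linear subspace. Theorem~\ref{t:Atypeconstruction} applied to $(F, g_K, J_K, f_K)$ produces a symplectic-type GKRS $(F, g, I, J, f)$ on $\mathring{M}$ whose canonical soliton vector fields satisfy $X_I = \tfrac{1}{2}(b-Ac)$ and $X_J = \tfrac{1}{2}(b+Ac)$. By our choice of $A$, $\spn\{X_I, X_J\} = \spn\{b, Ac\}$ is two dimensional, so the GKRS is rank $2$.

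For the global geometry, note that $u$ satisfies the Abreu--Guillemin boundary conditions of Proposition~\ref{p:global}, so the K\"ahler structure $(g_K, J_K)$ extends smoothly to $M = \C^n$. Proposition~\ref{p:complete} then guarantees that $(g, I, J)$ also extends smoothly to $M$, and Theorem~\ref{t:completeness}, applied in the Delzant setting with $\Pol=\R^n_{\geq 0}$, transfers completeness from $g_K$ to $g$. Finally, because the Cao-type potentials from \cite{apostolov2023hamiltonian} satisfy $|\nabla u(x)|\to\infty$ as $|x|\to \infty$, and because this estimate is preserved under the shift $\nabla u(x)\mapsto \nabla u(x)\pm Ax$ for $A$ of sufficiently small norm (as noted in the remark preceding the corollary), Proposition~\ref{p:standard} yields that both $I$ and $J$ are equivariantly biholomorphic to the standard complex structure $J_c$ on $\C^n$, completing the proof.

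The main obstacle is the quantitative input that $|\nabla u(x)\pm Ax|\to \infty$, which is asserted as an inspection of the explicit construction in \cite{apostolov2023hamiltonian}. In effect, one must verify that the drift Monge--Amp\`ere solutions produced there have gradients growing faster than any linear map $x\mapsto Ax$ (or at least than some fixed such map). Everything else is a direct application of the general theory in \S\ref{s:toricGKRS}--\S\ref{ss:extension}; the genericity of $A$ needed for rank $2$ is a purely linear-algebraic condition involving $b$ and $c$, and is automatically satisfied for generic choices once $n\geq 2$.
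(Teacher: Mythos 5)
Your proposal follows essentially the same route as the paper: take a Cao-type complete toric steady KRS on $\C^n$ from \cite{apostolov2023hamiltonian}, pass to its $A$-deformation via Theorem~\ref{t:Atypeconstruction}, extend and transfer completeness via Proposition~\ref{p:complete} and Theorem~\ref{t:completeness}, and identify both complex structures with the standard one via condition (2) of Proposition~\ref{p:standard}. The paper's own proof is a one-line ``inspection'' citing only Propositions~\ref{p:complete} and \ref{p:standard}; your write-up is more explicit about the rank and about which result gives completeness of $g$ (you correctly invoke Theorem~\ref{t:completeness}, which the paper leaves implicit). Your residual worry about verifying $|\nabla u(x)\pm Ax|\to\infty$ for the Cao potentials is precisely the point the paper also disposes of by inspection of \cite{apostolov2023hamiltonian}, so you are on equal footing there; note also that since $A$ is linear, the limit holds automatically whenever $|\nabla u(x)|$ grows superlinearly or $Ax$ dominates, and only a genuine verification of non-cancellation is needed in between.

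One genuine slip: you insist that $A\in\Lambda^2\tor$ be nondegenerate. For $n$ odd no nondegenerate element of $\Lambda^2\tor$ exists, so as written your construction is vacuous on $\R^{2n}$ for odd $n$. The hypothesis is also unnecessary. The soliton fields are $X_I=\tfrac12(b-A(c))$, $X_J=\tfrac12(b+A(c))$, so rank $2$ is equivalent to $b\wedge A(c)\neq 0$. By Lemma~\ref{l:KRSscalar} a complete non-Ricci-flat steady toric KRS has $\IP{b,c}>0$, whereas $\IP{c,A(c)}=0$ always; hence $b$ and $A(c)$ are automatically linearly independent as soon as $A(c)\neq 0$, which can be arranged for any $n\geq 2$ with a (possibly degenerate) $A$. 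Dropping ``nondegenerate'' repairs the argument with no other changes.
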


\subsection{Completeness}

We now turn to the main issue of this subsection, showing equivalence of completeness for K\"ahler metrics and their $A$-deformations under the Delzant hypothesis, providing a partial converse to Proposition \ref{p:complete}.  We will use the theory of path length spaces, and so briefly recall some aspects of this theory.  Let $(X, d)$ be a (path connected) metric space. For any continuous curve $\gamma(t): [0, 1] \to X$, its metric length is defined as 
\[ L_d(\gamma)= \sup\left\{ \sum_{i=1}^k d(\gamma(t_i), \gamma(t_{i+1})\right\},\] where the $\sup$ is taken over all finite partitions $0=t_0<t_1< \cdots <t_k=1$ of $[0,1]$.  Minimizing the metric length of curves yields a distance function $d^*$, and the metric space $(X, d)$ is a \emph{length metric space} if $d^* = d.$  This is equivalent to asking that for any two points $x, y \in X$ and any $\varepsilon>0$, there exists a continuous curve $\gamma(t): [0, 1] \to X$ joining $x$ and $y$ such that
\[ L_d(\gamma) < d(x, y) + \varepsilon.\]
We refer to \cite{petrunin2023pure} (see Ch.1, K and L) for further background.

\begin{thm} \label{t:completeness_text} (cf. Theorem \ref{t:completeness}) Suppose $(M, F, \T)$ is a non-compact toric symplectic manifold obtained by the Delzant construction with respect to a simple convex unbounded polytope $\Pol \subset \R^n$. Let $(g, I, J,f)$ and $(g_K, J_K)$ be respectively a $\T$-invariant GKRS and $\T$-invariant KRS on 
$M$ related by an $A$-deformation over $\mathring{M}$ as in Theorem~\ref{t:Atypeconstruction}.  Then $g$ is complete if and only if $g_K$ is complete.
\end{thm}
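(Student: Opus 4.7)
\emph{Forward direction.} The implication $g$ complete $\Rightarrow g_K$ complete is already at hand: Proposition~\ref{p:complete} gives $g_K\geq g$ on $M$, hence $d_g\leq d_{g_K}$, so every $g_K$-Cauchy sequence is $g$-Cauchy and converges in the manifold topology (which coincides with both the $g$- and $g_K$-topologies), giving metric completeness of $(M,g_K)$. The substance of the theorem therefore lies in the reverse implication, where the Delzant hypothesis enters essentially through the properness of $\mu:M\to\Pol$.

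\emph{Reverse direction: main plan.} Starting from a $g$-Cauchy sequence $\{p_n\}\subset M$, I would show that $\{\mu(p_n)\}$ is bounded in $\R^n$; properness of $\mu$ then confines $\{p_n\}$ to a compact set, and the Cauchy property upgrades a convergent subsequence to convergence of the entire sequence. The observation making this work is that in angular-momentum coordinates both $g$ and $g_K$ have the identical horizontal block $\Hess(u)_{ij}\,d\mu_i\,d\mu_j$ (compare \eqref{g} with \eqref{g_K}), so they induce the same Riemannian quotient metric $\bar{g}$ on $\mathring{\Pol}$. Since $\mu$ is length-non-increasing for $g$, $\{\mu(p_n)\}$ is $\bar{g}$-Cauchy in $\mathring{\Pol}$.

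\emph{Converting $\bar{g}$-control into $\R^n$-boundedness.} Fix a basepoint $p_0\in M$ and write $\bar p_0=\mu(p_0)$. The key inclusion is
\[
 B_{\bar{g}}(\bar p_0, r)\ \subset\ \mu\bigl(\overline{B_{g_K}(p_0, r)}\bigr)\qquad \text{for every } r>0.
\]
Given $\bar q$ in the left-hand side and $\varepsilon>0$, select a $\bar{g}$-path in $\Pol$ from $\bar p_0$ to $\bar q$ of length $<r+\varepsilon/2$; since $\Pol$ is simple and convex and $\del\Pol$ has codimension at least one, this path can be perturbed to lie in $\mathring{\Pol}$ with length increase $<\varepsilon/2$, and then horizontally lifted in $g_K$ (where $\mathring M\to\mathring{\Pol}$ is a Riemannian submersion) to a $g_K$-curve of the same length starting at $p_0$. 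The endpoint of this lift lies in $B_{g_K}(p_0, r+\varepsilon)$; by $g_K$-completeness and Hopf--Rinow the closed ball $\overline{B_{g_K}(p_0, r+\varepsilon)}$ is compact, and letting $\varepsilon\to 0$ yields the inclusion. Since the right-hand side is compact, hence bounded in $\R^n$, every $\bar{g}$-bounded subset of $\Pol$ is $\R^n$-bounded, so $\{\mu(p_n)\}$ is bounded in $\R^n$ and the properness of $\mu$ completes the argument.

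\emph{Main obstacle.} The principal technical step is the horizontal-lift-and-perturbation argument at the boundary of $\Pol$: the map $\mu$ is not a Riemannian submersion over $\del\Pol$ because the $\T$-orbit types degenerate there, so one must appeal to the local boundary form of $\bar{g}$ guaranteed by the Abreu--Guillemin boundary conditions (Proposition~\ref{p:global}) both to perturb a $\bar{g}$-minimizing path into $\mathring{\Pol}$ with arbitrarily small length increase and to ensure that the horizontal lift extends continuously up to $\mu^{-1}(\del\Pol)$. A clean conceptual alternative that I would pursue in parallel is to invoke the standard fact from the theory of length spaces that a compact Lie group acting by isometries on a complete length metric space induces a complete length-metric quotient, applied here to the $\T$-action on $(M, g_K)$; this identifies $(\Pol,\bar{g})$ as a complete metric space without manipulating horizontal lifts near the boundary explicitly.
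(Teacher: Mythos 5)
Your proposal is correct in outline but takes a genuinely different route from the paper. The paper proves the stronger statement that the two Hausdorff quotient distances $d^N_g$ and $d^N_{g_K}$ on the orbit space $N=M/\T$ coincide \emph{exactly}: it first identifies both with the reduced Hessian metric $g_{\rm red}={\rm Hess}(u)$ on $\mathring{\Pol}$ via a length-space argument (Lemma~\ref{l:local-metric}), then extends this stratum by stratum over the faces of $\Pol$, using that the face preimages are totally geodesic and that $g$ and $g_K$ have identical horizontal parts there; the completeness equivalence then follows because properness of the distance function passes to quotients by compact isometric actions. You instead extract only what is needed for completeness: properness of $\mu$ (from the Delzant hypothesis) together with the inclusion $B_{g_{\rm red}}(\bar p_0,r)\subset \mu\left(\overline{B_{g_K}(p_0,r)}\right)$, obtained by horizontal lifting in the Riemannian submersion $\mu:(\mathring{M},g_K)\to(\mathring{\Pol},{\rm Hess}(u))$, to conclude that $g$-bounded sets have bounded moment image. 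This is more economical but proves less than the paper's exact identity of quotient metrics. Two remarks on your flagged obstacle: first, the lifting step needs no boundary analysis, since $B_{g_{\rm red}}(\bar p_0,r)\subset\mathring{\Pol}$ by definition and lifts of paths in $\mathring{\Pol}$ stay in $\mathring{M}$ (take constant angular coordinates, as the paper does); the genuine boundary issue sits in the \emph{projection} step, where a $g$-short curve from $p_0$ to $p_n$ may cross $M\setminus\mathring{M}$. That set is a finite union of submanifolds of codimension at least two (the preimages of the open faces), so the curve can be perturbed off it, rel its endpoints, with arbitrarily small increase of $g$-length; this is cleaner than perturbing downstairs near $\partial\Pol$ and closes the gap. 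Second, your ``clean conceptual alternative'' via the length-space quotient is not really an alternative: identifying the quotient length metric of $(M,g_K)$ with the reduced Hessian metric across the strata is precisely the content of Lemma~\ref{l:local-metric} and the stratified argument that follows it, so that route reproduces the paper's proof rather than bypassing it.
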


\begin{proof}  Let $N:= M/\T$ be the quotient space for the $\T$-action on $M$. Under the Delzant hypothesis, it is shown in \cite{KL} that $N$ is a manifold with corners isomorphic to $\Pol$. The Riemannian metrics $g$ and $g_K$ on $M$ define, respectively,  distance functions $d^M_g$ and $d^M_{g_K}$ on $M$; these induce Hausdorff distance functions on the space of orbits $N$, denoted respectively $d^N_g$  and $d^N_{g_K}$. 

We first observe that $(M, g)$ (resp. $(M, g_K)$) is complete if and only if $d^N_g(q_0, \cdot)$
(resp. $d^N_{g_K}(q_0, \cdot)$) is proper.  This follows because Riemannian manifolds are complete if and only if their induced distance functions are proper, and then observing that properness of the distance function on a metric space is equivalent to properness of the distance function on any quotient via an isometric action by a compact group.  Hence we aim to show that $d^N_g = d^N_{g_K}$.

Observe a general point that the property of being a length space is inherited by quotients of length spaces by compact isometric group actions.  In particular a quotient of any (potentially noncompact, noncomplete) Riemannian manifold by an isometric torus action is a length space.  Thus, consider $\pi : \mathring{M} \to \mathring{N}$, where $\mathring{M}$ is the open dense subset of points of $M$  which have $n$-dimensional $\T$-orbits and $\mathring{N} = \mathring{M}/\T$. Using the Delzant hypothesis, $\mathring{M} = \mu^{-1}(\mathring{\Pol})$ and all points in $\mathring{M}$ have trivial stabilizors. Thus, $\mathring{N}$ is a smooth manifold diffeomorphic to $\mathring{\Pol}$ through the induced quotient momentum map $\bar \mu : N \to \Pol$.  We suppress this identification and assume that $\mathring{N}= \mathring{\Pol}$.  The (incomplete) Riemannian manifolds  $(\mathring{M}, g)$ and $(\mathring{M}, g_K)$ induce Riemannian distance functions denoted $d^{\mathring{M}}_g$ and $d^{\mathring M}_{g_K}$ respectively.  By the above remark this yields induced length-space structures on $\mathring{N}$ denoted $d^{\mathring{N}}_g$ and $d^{\mathring{N}}_{g_K}$ respectively.  Finally, as $\mathring{\Pol}$ is contractible there exists a global symplectic potential $u$, well-defined up to the addition of an affine linear function on $\mathring{\Pol}$.  This then yields a well-defined Riemannian metric $g_{\rm red} := {\rm Hess}(u)$ on $\mathring{N}$, which induces a distance which we denote $d^{\mathring{N}}_{g_{\rm red}}$.  In summary, we have defined three a priori different distances on $\mathring{N}$: $d^{\mathring{N}}_g$, $d^{\mathring{N}}_{g_K}$ and $d^{\mathring{N}}_{g_{\rm red}}$.

The next point is to establish that in fact $d^{\mathring{N}}_g= d^{\mathring{N}}_{g_K}= d^{\mathring{N}}_{g_{\rm red}}$.  It is enough to establish the equality $d^{\mathring{N}}_g=d^{\mathring{N}}_{g_{\rm red}}$, as the argument for $d^{\mathring{N}}_{g_K}=d^{\mathring{N}}_{g_{\rm red}}$ is similar.  To this end, let $\tilde \gamma(t)\in \mathring{M}$ be a curve joining two points $p_1, p_2 \in \mathring{M}$ and $\gamma(t) = \pi \circ \tilde{\gamma}$ be the projected curve in $\mathring{N}$ joining $q_1=\pi(p_1)$ and $q_2=\pi(p_2)$. By the diagonal form \eqref{g} of $g$ in angular/momentum coordinates, we have
\begin{equation}\label{1} L^{\mathring{M}}_g(\tilde \gamma(t)) \geq L^{\mathring{N}}_{g_{\rm red}}(\gamma (t)). \end{equation}
The above shows that $d^{\mathring{N}}_g(q_1, q_2) \geq d^{\mathring{N}}_{g_{\rm red}}(q_1, q_2)$.  Indeed, for $q_1, q_2 \in \mathring{N}$ we can find $p_1, p_2 \in \mathring{M}$ such that $\pi(p_i)=q_i$ and $d^{\mathring{M}}_g(p_1, p_2) = d^{\mathring{N}}_g(q_1, q_2)$. Then, for any $\varepsilon >0$ there is a curve $\tilde\gamma(t) \in \mathring{M}$ joining $p_1$ and $p_2$ and projected curve $\gamma(t) = \pi(\tilde \gamma(t))$ joining $q_1$ and $q_2$ such that 
\[ \varepsilon + d^{\mathring{N}}_g(q_1, q_2) = \varepsilon + d^{\mathring{M}}_g(p_1, p_2) > L^{\mathring{M}}_g(\tilde \gamma) \geq L^{\mathring{N}}_{g_{\rm red}}(\gamma(t)) \geq d^{\mathring{N}}_{g_{\rm red}}(q_1, q_2).  \]
In the other direction, any curve $\gamma(t) \subset \mathring{N}$ between $q_1$ and $q_2$ can be lifted to a curve $\tilde \gamma(t) \subset \mathring{M}$ which in momentum angular coordinates has constant angular coordinates: we then have
\[L^{\mathring{M}}_g(\tilde \gamma(t)) = L^{\mathring{N}}_{g_{\rm red}}(\gamma (t)).\]
Observe that if $p_1, p_2$ are the end points of $\tilde \gamma(t)$, we have the inequalities 
\[ d^{\mathring{N}}_g(q_1, q_2) \leq d^{\mathring{M}}_g(p_1, p_2) \leq L^{\mathring{M}}_g(\tilde \gamma(t)) = L^{\mathring{N}}_{g_{\rm red}}(\gamma (t)).\]
As this holds for any curve $\gamma(t)$ joining $q_1, q_2$ we deduce $d_g^{\mathring{N}}(q_1,q_2) \leq d^{\mathring{N}}_{g_{\rm red}}(q_1, q_2)$.

With these preliminaries in place we are ready to establish a key lemma:
\begin{lemma}\label{l:local-metric} For any $q \in \mathring{N}$, there exists a neighbourhood $q\in U \subset \mathring{N}$ such that, for any $q_1, q_2 \in U$,
\[ d^{N}_g(q_1, q_2) = d^{N}_{g_K}(q_1, q_2) = d^{\mathring{N}}_{g_{\rm red}}(q_1, q_2).\]
\end{lemma}
\begin{proof} An elementary argument shows the following key property of $d^N_g$ (and $d^N_{g_K}$): for any $p_0 \in M$, the metric ball $B_{d^M_g}(p_0, r)$
of $(M, d^M_g)$ projects under $\pi$ to the metric ball $B_{d^{N}_g}(\pi(p_0), r)$ of $(N, d^N_g)$. Thus, if $p_0 \in \pi^{-1}(q_0) \in \mathring{M}$, using that $\mathring{M}$ is open, we can find a small geodesically convex ball $B_{d^M_g}(p_0, r) \subset \mathring{M}$. It will project to the ball $B_{d^{N}_g}(q_0, r)$ which is thus a subset on $\mathring{N}$.  We first claim that $B_{d^{\mathring{N}}_g}(q_0, r) = B_{d^{N}_g}(q_0, r)$ and $d^{\mathring{N}}_g= d^{N}_g$ on $B_{d^{N}_g}(q_0, \frac{r}{2})$, which  will yield the lemma as we have established $d_g^{\mathring{N}} = d_{g_{\rm red}}^{\mathring N}$ above.  Clearly, by definition, $d^{\mathring{N}}_g(q_0, q) \geq d^{N}_g(q_0, q)$. 
If $d_g^N(q_0, q)< r$, since $g$ is $\T$-invariant there exists a point $p \in \pi^{-1}(q)$ such that $d^M_g(p_0, p) = d^N_g(q_0, q)<r$, i.e. $p\in B_{d^M_g}(p_0, r)$.  The length of the geodesic segment $\tilde \gamma(t) \subset  B_{d^M_g}(p_0, r)$ joining $p_0$ and $p$  realizes $d^M_g(p_0, p)=d^N_g(q_0, q)$.  Hence it follows from \eqref{1} that $\tilde \gamma(t)$ projects to  a curve in $\mathring{N}$ such that $L^{\mathring{N}}_{g_{\rm red}}(\gamma(t))= d^{\mathring{N}}_{g_{\rm red}}(q_0, q)$.  As we established $g^{\mathring{N}}_{g_{\rm red}} = d_g^{\mathring{N}}$ above, it follows that $d^N_g(q_0, q)= d^{\mathring{N}}_{g_{\rm red}}(q_0, q)=d^{\mathring{N}}_g(q_0, q)$.   We can perform a similar argument for any two points $q_1, q_2 \in B_{d^N_g}(q_0, \frac{r}{2})$, using that $B_{d^M_g}(p_0, r)$ is geodesically convex.
\end{proof}

Finally we are ready to establish the key equality $d^N_g= d^N_{g_K}$. 
The point is to establish an analogue of Lemma~\ref{l:local-metric} on the preimage $\mathring{M}_{Q} \subset M$ of the interior of each face ${Q} \subset \Pol$.  This follows by the following facts: 
\begin{itemize}
    \item As $(M, F, \T)$ is Delzant, $M_Q=\mu^{-1}(Q)$ is a (smooth) toric submanifold of $(M, F, \T)$ with respect to torus action of $\check{\T}_Q=\T/\T_{Q}$,  where $\T_{Q} \subset \T$ is the common stabilizer in $\T$ of all points in $M_{Q}$.
    \item As $M_Q$ is the set of fixed points of $\T_Q$, which acts isometrically on $(M, g)$, the submanifold $(M_F, g)$ is totally geodesic.  The same statement holds for $(M_F, g_K)$.
    \item The data $(g_K, F)$ restricted to $M_Q$ defines a $\check{\T}_Q$-toric K\"ahler metric with  symplectic potential $u_{|_{\mathring{Q}}}$ defined on $\mathring{Q}$ (cf. proof of \cite[Prop.2]{donaldson2004interior}).  
    Using \eqref{extension}, $g$ restricted to $\mathring{M}_Q$ has the diagonal form \eqref{g} with potential function $u_{|_{\mathring{Q}}}$ and bivector $A_Q$ which is the projection of $A \in \Lambda^2\tor$ to $\Lambda^2(\tor/\tor_Q)$. In particular, $g$ and $g_K$ have equal horizontal parts on $\mathring{M}_Q$.
\end{itemize}
We thus obtain a stratification of $N$ with open metric subspaces $\mathring{N}_Q$, parametrized by the set of faces $Q \subset \Pol$.  On each open strata $\mathring{N}_Q$, we showed that locally $d_g^{\mathring{N}_Q}= d_{g_K}^{\mathring{N}_Q}$.  We conclude from here that the $d_g^N$-length of any curve in $N$ equals its $d^N_{g_K}$-length, and hence $d^N_g = d^N_{g_K}$.\end{proof}

\subsection{Global geometry of GKRS}

A standard fact of generalized Ricci solitons is that they have constant weighted scalar curvature \cite{OSW} (cf. \cite{GRFbook}).  In general the soliton potential function $f$ is obtained implicitly via the $\lambda$ functional, and the value of the weighted scalar curvature is also implicit, equal to the $\gl$ functional itself.  In the toric setting we obtain explicit expressions for both in terms of the canonically associated vectors $b$ and $c$.

\begin{lemma} \label{l:KRSscalar} Given $(M, F, g_K, J_K, f_K)$ a K\"ahler-Ricci soliton as in Lemma \ref{l:KRS}, the weighted scalar curvature satisfies
\begin{align} \label{f:CSC}
R_{g_K} + 2 \gD f_K - |\N f_K|^2 = - \IP{b,c}.
\end{align}
In particular,
\begin{align} \label{f:bcinequality}
\IP{b,c} \geq 0.
\end{align}
\end{lemma}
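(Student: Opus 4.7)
The plan is to reduce the identity (\ref{f:CSC}) to two classical integrability properties of steady Ricci solitons, and then use the drift Monge--Amp\`ere equation of Lemma \ref{l:KRS} to evaluate the resulting constant explicitly in terms of $b$ and $c$. Tracing $\Rc_{g_K}+\N^2 f_K=0$ gives $R_{g_K}+\gD f_K=0$, while contracting the second Bianchi identity with $\Rc_{g_K}=-\N^2 f_K$ produces the standard conservation law $R_{g_K}+|\N f_K|^2 = c_1$ for some constant $c_1$ on $M$. Substituting these into the definition of the weighted scalar yields
\[
R_{g_K}+2\gD f_K-|\N f_K|^2=-R_{g_K}-|\N f_K|^2=-c_1,
\]
so (\ref{f:CSC}) reduces to the pointwise identification $c_1=\IP{b,c}$.

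For this identification I would work in the angle--momentum coordinates \eqref{momentum-angle}, where $g_K$ is block diagonal with blocks $\Hess(u)$ on the momenta and ${\bf H}^K=(\Hess u)^{-1}$ on the angles. Since $\det\Hess(u)\cdot\det {\bf H}^K=1$, the Riemannian volume form is simply $d^n\mu\wedge d^nt$, and with $f_K=-\IP{b,\mu}$ one computes
\[
|\N f_K|^2 = b_ib_j\,{\bf H}^K_{ij},\qquad \gD f_K=-b_i\,\del_{\mu_j}{\bf H}^K_{ij}.
\]
Combining $\del_{\mu_j}{\bf H}^K_{ij}=-{\bf H}^K_{ik}u_{klj}{\bf H}^K_{lj}$ with the Jacobi formula ${\bf H}^K_{lj}u_{klj}=\del_k\log\det\Hess(u)$ and the differential of the equation of Lemma \ref{l:KRS},
\[
\del_k\log\det\Hess(u) = b_k - c_l u_{lk},
\]
a short calculation gives $\del_{\mu_j}{\bf H}^K_{ij}=-b_k\,{\bf H}^K_{ik}+c_i$, and hence
\[
\gD f_K = |\N f_K|^2 - \IP{b,c}.
\]
Comparing with $\gD f_K=-R_{g_K}$ and $R_{g_K}+|\N f_K|^2=c_1$ gives $c_1=\IP{b,c}$, completing (\ref{f:CSC}).

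For the inequality (\ref{f:bcinequality}), the identity $\IP{b,c}=R_{g_K}+|\N f_K|^2$ now holds pointwise. Since $|\N f_K|^2\geq 0$, it suffices to exhibit one point where $R_{g_K}+|\N f_K|^2\geq 0$. The plan is to invoke B.-L.~Chen's theorem that the scalar curvature of any complete steady gradient Ricci soliton is nonnegative, which applies under the completeness hypothesis implicit in the global framework of this subsection. The main obstacle is precisely this last step: the sign is a genuinely global statement and cannot be extracted from the local toric data alone, so some external input (completeness plus Chen's theorem, or a suitable asymptotic analysis near the unbounded ends of the polytope) is unavoidable.
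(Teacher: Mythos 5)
Your proposal is correct, and its overall skeleton coincides with the paper's: both arguments reduce \eqref{f:CSC} to the pointwise identity $R_{g_K}+|\N f_K|^2=\IP{b,c}$ combined with the traced soliton equation $R_{g_K}+\gD f_K=0$, and both obtain \eqref{f:bcinequality} from the Chen--Zhu nonnegativity of scalar curvature for \emph{complete} steady solitons (you are right that this global input is unavoidable, and the paper uses it in exactly the same way; completeness is implicit in the placement of the lemma in \S\ref{s:global}). Where you differ is in how the key identity is established. The paper passes to the Legendre transform $\phi$ of $u$, rewrites the KRS equation in the dual variables, computes the Ricci form from $\log\det\Hess(\phi)$ using pluriharmonicity of $y=\N u$, and identifies $R_{g_K}=L_{Y_b}\log\det\Hess(\phi)=\IP{b,c}-g_K(X_b,X_b)$. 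You instead stay in momentum coordinates and compute $\gD f_K=-b_i\,\del_{\mu_j}{\bf H}^K_{ij}$ directly, using the Jacobi formula and the differentiated Monge--Amp\`ere equation $\del_{\mu_k}\log\det\Hess(u)=b_k-c_lu_{lk}$ to get $\gD f_K=|\N f_K|^2-\IP{b,c}$; this is a more elementary route that avoids the Legendre transform entirely (and renders your appeal to the Bianchi conservation law $R_{g_K}+|\N f_K|^2=c_1$ redundant, since your computation already gives the pointwise value). The paper's detour through $\phi$ buys nothing extra for this lemma per se, but it sets up the dual form \eqref{KRS-dual} of the soliton equation, which is reused in the Legendre duality discussion of \S\ref{s:Legendre}; your version is self-contained and slightly shorter.
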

\begin{proof} We first recall that for a toric KRS the potential function $f_K$ is the pullback of the linear function $- \IP{b,x}$ by the momentum map $\mu$ (cf. (\cite{apostolov-notes} Lemma 3.1, \cite{cifarelli2022uniqueness} \S 2.3).  We thus get $X_b= -J_K\nabla f_K$ where $X_b= \sum_{i=1}^n b_i X_i $ is the fundamental vector field of $b$. We will actually compute that
\begin{align} \label{f:scalID}
    R_{g_K} + |X_b|^2 = R_{g_K}+ | \nabla f_K|^2 = \IP{b,c}.
\end{align}
Given this and the fact that $\Rc_{g_K} + \N^2 f_K = 0$, equation (\ref{f:CSC}) follows.  Furthermore, given that complete steady Ricci solitons have nonnegative scalar curvature \cite{ChenZhu}, (\ref{f:bcinequality}) follows.

To establish (\ref{f:scalID}), we use the Legendre transform $\phi(y)$ of the convex function function $u(x)$, defined  as in (\ref{f:legendre}) by
\[ \phi(y) + u(x) = \langle x, y \rangle, \qquad y:= \nabla u(x), \quad x = \nabla \phi(y),\]
to equivalently rewrite the KRS condition as
\begin{equation}\label{KRS-dual}
    - \log \det {\rm Hess}(\phi) + \langle c, y \rangle=  \langle b, \nabla \phi \rangle + const. \end{equation}
Using that $y=\nabla u$ are pluriharmonic, we compute
\[ 
\begin{split}
    \rho_{g_K} &=-\tfrac{1}{2} dd^c_{J_K} \log \det {\rm Hess}(\phi) = \tfrac{1}{2} dd^c_{J_K} \left(\sum_{j=1}^n b_j \phi_{, j}(y)\right) \\ 
    &= \tfrac{1}{2} d\left(\sum_{j,k=1}^n b_j \phi_{, jk}(y) dt_k\right) \\
    &= \tfrac{1}{2} \sum_{j, k, r=1}^n b_j\phi_{, jkr} dy_r \wedge dt_k = \tfrac{1}{2} \sum_{j=1}^n b_j{\rm Hess}(\phi)_{rk, j} dy_r \wedge dt_k. 
    \end{split}\]
The symplectic form $F$ is given in the $(t, y)$ coordinates by (see e.g. \cite{apostolov-notes})
\[ F = dd^c_{J_K} \phi(y)= \sum_{r,k=1}^n dy_r\wedge dt_k.\]
We use the above computations to express the scalar curvature
\[ 
\begin{split} 
R_{g_K} &= 2(\rho_{g_K} \wedge F^{[n-1]})/F^{[n]} = L_{Y_b} \left(\log \det {\rm Hess}(\phi) \right), \\
        \end{split} 
\]
where $Y_b = -J_KX_b= \sum_{j=1}^n b_j \partial_{y_j}$.  Using \eqref{KRS-dual} again, we have
\[ 
\begin{split} R_{g_K} =L_{Y_b} \left(\log \det {\rm Hess}(\phi) \right) &= \langle b,c \rangle - \sum_{i,j=1}^n ({\rm Hess}(\phi))_{ij} b_i b_j \\ 
& = \langle b,c \rangle - \sum_{i,j=1}^n \left({\rm Hess}(u)^{-1}\right)_{ij} b_i b_j \\ &= \langle b,c \rangle - g_K(X_b, X_b),
\end{split}\]
which yields the identity.
\end{proof}

\begin{defn} Given $(M, g, I, J)$ a generalized K\"ahler structure, and $\varphi \in C^{\infty}(M)$, the \emph{generalized scalar curvature} is
\begin{align*}
    R_g^{H, \varphi} = R_g - \tfrac{1}{12} \brs{H}^2 + 2 \gD \varphi - \brs{\N \varphi}^2,
\end{align*}
where $H= - d^c_J \omega_J = d^c_I \omega_I$ is the torsion.
\end{defn}

\begin{rmk} \label{r:scalarhistory} The first appearance of a natural scalar curvature in generalized K\"ahler geometry was in \cite{boulanger2019toric}, where a moment map for toric symplectic-type GK structures was derived, and thus taken to be the definition of scalar curvature, inspired by \cite{donaldson2003remarks}.  This was built upon by Goto in \cite{Goto_2020,goto-21}, first extending Boulanger's definition beyond the toric setting, then giving a general definition depending on a choice of volume form $\mu$.  As Goto's definition uses the local spinorial description of GK structures, it was unclear how to express the scalar curvature classically.  In the toric case Goto's spinorial description was shown to equal the Boulanger formulation in \cite{wang2020toric}.  Finally it was established in \cite[Theorem 1.2]{ASUScal} that this scalar curvature equals a constant multiple of $R^{H,\varphi}$ where $\mu = e^{-\varphi} dV_g$.  In the context of symplectic-type GK structures, Goto and Boulanger implicitly choose the volume form $\mu$ to be the symplectic volume form $\tfrac{1}{n!} F^n$, which then determines the weight function
\begin{align*}
    \Psi = - \log \frac{F^n}{dV_g},
\end{align*}
which agrees with one of the partial Ricci potential functions of Definition \ref{d:Riccipot} (cf. \cite{apostolov2022generalized, ASUScal}).
\end{rmk}

\begin{cor} \label{c:GKRSscalar} Let $(M, F, g, I, J, f)$ be a complete toric generalized K\"ahler-Ricci soliton which is an $A$-deformation of a toric steady K\"ahler-Ricci soliton $(g_K, J_K)$ over a dense open subset $\mathring{M} \subset M$. Then the generalized scalar curvature $R^{H,\Psi}$ of $(g, I, J)$ satisfies the pointwise inequality
\[ R_g^{H,\Psi} \geq 0.\]
If equality holds at one point, then $R^{H,\Psi} \equiv 0$ everywhere and $(F, g, I, J)$ is an $A$-deformation of a toric Ricci-flat K\"ahler metric $(g_K, J_K)$.
\end{cor}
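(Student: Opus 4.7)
The plan is to show that under our hypotheses $R^{H,\Psi}_g \equiv \langle b, c\rangle$, a constant which is nonnegative by completeness. First, Proposition~\ref{p:complete} applied to the completeness of $g$ gives completeness of $g_K$ on $M$, and the proof of Lemma~\ref{l:KRSscalar} (formula \eqref{f:scalID}) yields
\begin{align*}
R_{g_K} + |X_b|^2_{g_K} = \langle b, c\rangle \geq 0.
\end{align*}
Hence it suffices to establish the pointwise identity $R^{H,\Psi}_g = R_{g_K} + |X_b|^2_{g_K}$ on the dense open set $\mathring{M}$; constancy on $M$ then follows by continuity.

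To prove the key identity I would exploit the standard fact that any gradient steady generalized Ricci soliton satisfies $R^{H,f}_g \equiv 0$ \cite{GRFbook} and rewrite
\begin{align*}
R^{H,\Psi}_g = R^{H,\Psi}_g - R^{H,f}_g = 2\Delta_g(\Psi - f) + |\N f|^2_g - |\N \Psi|^2_g.
\end{align*}
In the symplectic-type setting the partial Ricci potential reduces to $\Psi = -\log\frac{F^n/n!}{dV_g} = \tfrac12\log\det g$. Using the block-diagonal form \eqref{g}, the linear algebra identities \eqref{linear-algebra}, and the observation $\det(\Hess(u)+A) = \det(\Hess(u)-A)$ (which gives $\det\bigl(\Hess(u) - A(\Hess u)^{-1}A\bigr) = \det(\Hess(u)+A)^2/\det\Hess(u)$), a short computation yields
\begin{align*}
\Psi = \log\frac{\det \Hess(u)}{\det(\Hess(u) + A)}.
\end{align*}
Combined with Remark~\ref{r:GKRSpotential} this gives $\Psi - f = \langle b, \mu\rangle$ up to an additive constant. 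Since $\N(\Psi - f)$ is therefore horizontal (in the span of $\partial/\partial \mu_i$) and $g$ is block-diagonal with horizontal component $\Hess(u)$, one computes $|\N(\Psi-f)|^2_g = |X_b|^2_{g_K}$. Substituting and invoking the KRS equation \eqref{f:KRS} collapses the remaining terms, reducing $R^{H,\Psi}_g$ to $R_{g_K} + |X_b|^2_{g_K}$ as required.

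Alternatively, one may appeal to the Boulanger--Wang--Goto interpretation of the GK scalar curvature as a moment map~\cite{boulanger2019toric,wang2020toric,goto-21,ASUScal}, which yields an Abreu-type formula for $R^{H,\Psi}$ involving the symmetric part of $(\Hess(u) + A)^{-1}$; via \eqref{linear-algebra} this simplifies to the Abreu expression for $g_K$ plus the canonical soliton correction $|X_b|^2_{g_K}$. The main technical obstacle is precisely this bookkeeping: one must carefully track the symmetric and antisymmetric parts through the reduction, with the identity $R^{H,f}_g = 0$ providing the shortest route.

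For the rigidity statement, since $R^{H,\Psi}_g \equiv \langle b, c\rangle$ is constant on $M$, vanishing at a single point forces $\langle b, c\rangle = 0$ and hence $R_{g_K} + |X_b|^2_{g_K} \equiv 0$ on $\mathring M$. The Chen--Zhu theorem \cite{ChenZhu} gives $R_{g_K} \geq 0$ on any complete steady Ricci soliton, so both summands must vanish identically. The vanishing of $|X_b|^2_{g_K}$ on $\mathring M$ forces $b = 0 \in \tor$, whence $f_K$ is constant and the KRS equation reduces to $\Rc_{g_K} = 0$. Thus $(g_K, J_K)$ is a complete toric Ricci-flat K\"ahler metric on $M$ whose $A$-deformation is $(F, g, I, J)$, completing the proof.
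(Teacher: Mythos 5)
Your proposal hinges on the identity $R^{H,\Psi}_g \equiv \langle b,c\rangle$, and the step that produces it is the assertion that any gradient steady generalized Ricci soliton satisfies $R^{H,f}_g \equiv 0$. That assertion is false: the weighted scalar curvature with weight equal to the \emph{soliton potential} is constant but not zero. Already in the Riemannian case, tracing $\Rc+\nabla^2 f_K=0$ gives $R_{g_K}+\Delta f_K=0$, and combined with Hamilton's identity $R_{g_K}+|\nabla f_K|^2=\mathrm{const}$ one finds $R^{0,f_K}_{g_K}=R_{g_K}+2\Delta f_K-|\nabla f_K|^2=-(R_{g_K}+|\nabla f_K|^2)$, which for the cigar soliton is a strictly negative constant. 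In the present setting Proposition~\ref{p:constantscal} computes exactly $R^{H,f}_g=R^{0,f_K}_{g_K}=-\langle b,c\rangle$, which vanishes only in the Ricci-flat case (Corollary~\ref{c:rank1fromCY}). Feeding the correct value into your own difference formula $R^{H,\Psi}_g=R^{H,f}_g+2\Delta_g(\Psi-f)+|\nabla f|^2_g-|\nabla\Psi|^2_g$, together with $\Psi-f=-f_K$, \eqref{laplacians}, and the trace of the KRS equation, yields $R^{H,\Psi}_g=R_{g_K}$ --- the \emph{pointwise, generally non-constant} scalar curvature of the K\"ahler metric --- not $R_{g_K}+|X_b|^2_{g_K}=\langle b,c\rangle$. (The same off-by-$|X_b|^2$ error appears in your ``alternative'' Abreu-formula route.) This is precisely the identity the paper uses, obtained there directly from Remark~\ref{r:scalarhistory} and Abreu's formula $R^{H,\Psi}_g=-\sum_{i,j}{\bf H}^K_{ij,ij}=R_{g_K}$.

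The error matters for the structure of the argument, not just the bookkeeping. If $R^{H,\Psi}_g$ were the constant $\langle b,c\rangle$, the rigidity clause would be vacuous; since it is actually the non-constant function $R_{g_K}$, the inequality $R^{H,\Psi}_g\geq 0$ requires the pointwise bound $R_{g_K}\geq 0$ for complete steady solitons (Chen--Zhu, after using Proposition~\ref{p:complete} to transfer completeness from $g$ to $g_K$ --- a step you do carry out correctly), and the equality case requires the strong rigidity statement that $R_{g_K}=0$ at a single point forces $g_K$ to be Ricci-flat (\cite{chow2023Ricci} Theorem 2.14). Your final paragraph invokes Chen--Zhu, but only after deducing $\langle b,c\rangle=0$ from the false constancy claim, so the logic does not go through as written. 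The fix is short: replace the claimed identity by $R^{H,\Psi}_g=R_{g_K}$ and apply the steady-soliton scalar curvature estimates pointwise, which is the paper's proof.
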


\begin{proof} By the discussion of Remark \ref{r:scalarhistory}, the generalized scalar curvature of $(F, g, I, J)$ as defined in \cite{boulanger2019toric} is
\[R_g^{H,\Psi} = -\sum_{i,j=1}^n {\bf H}^K_{ij,ij},\]
which is also the scalar curvature $R_{g_K}$ of $(g_K, J_K)$ (see \cite{abreu2001kahler}). By Proposition~\ref{p:complete}, $(g_K, J_K)$ is a complete gradient steady K\"ahler-Ricci soliton.  Using (\cite{chow2023Ricci} Theorem 2.14, cf. \cite{chen2009strong}) we know that $R_{g_K} \geq 0$ and if $R_{g_K}=0$ at some point, then $g_K$ is Ricci-flat.
\end{proof}

\begin{cor} \label{c:rank1fromCY} Let $(g, I, J, F, \tor)$ be a complete toric GKRS on $M$, which is an $A$-deformation of a locally toric KRS $(g_K, J_K)$ on a dense open subset $\mathring{M} \subset M$, with $A\in \Lambda^2\tor$ nondegenerate.  Then $(g,I,J)$ has rank at most one if and only if $(g_K, J_K)$ is Ricci-flat. 
\end{cor}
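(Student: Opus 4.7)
The proof rests on the explicit formulas $X_I=\tfrac{1}{2}(b-Ac)$ and $X_J=\tfrac{1}{2}(b+Ac)$ for the canonical soliton vector fields derived in Theorem~\ref{t:Atypeconstruction}, together with the scalar curvature identity of Lemma~\ref{l:KRSscalar}. Here $b\in\tor$ encodes the KRS potential $f_K=-\IP{b,\mu}$ and $c\in\tor^*$ is the constant from the drift Monge-Amp\`ere form of the KRS equation (Lemma~\ref{l:KRS}). Since $b$ and $Ac$ are \emph{constant} elements of $\tor$, the rank of $(g,I,J)$ is globally constant and equals $\dim\spn\{X_I,X_J\}$; hence rank $\leq 1$ is equivalent to $b\wedge Ac=0$ in $\Lambda^2\tor$, i.e.\ to the linear dependence of $b$ and $Ac$ in $\tor$.

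My first main step is to show that rank $\leq 1$ implies $b=0$. Since $A\in\Lambda^2\tor$ corresponds to an antisymmetric bilinear form on $\tor^*$, one has $\IP{Ac,c}=0$ for every $c$. Combined with linear dependence of $b$ and $Ac$ and the nondegeneracy of $A$, a short case analysis (splitting into $Ac=0$, which forces $c=0$, and $b=\lambda Ac$) yields $\IP{b,c}=0$. By Proposition~\ref{p:complete}, completeness of $g$ implies completeness of $g_K$, so Lemma~\ref{l:KRSscalar} applies and gives $R_{g_K}+|X_b|^2=\IP{b,c}=0$. Since $R_{g_K}\geq 0$ on complete steady KRS by~\cite{chen2009strong} and $|X_b|^2\geq 0$, both terms vanish; in particular $b=0$. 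The reverse implication $b=0\Rightarrow{}$rank $\leq 1$ is immediate from $X_I=-\tfrac{1}{2}Ac=-X_J$.

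It remains to identify $b=0$ with Ricci-flatness of $(g_K,J_K)$. If $b=0$, the KRS equation of Lemma~\ref{l:KRS} becomes $\log\det\Hess(u)=-\IP{c,\N u}+const$, which is $J_K$-pluriharmonic by Lemma~\ref{pluriharmonic}; the Bismut-Ricci formula of Lemma~\ref{Bismut-Ricci} then gives $\rho_{g_K}=0$. Conversely, assuming Ricci-flatness, Lemma~\ref{pluriharmonic} gives $\log\det\Hess(u)=\IP{a,\N u}+const$ for some $a\in\tor^*$, and substituting into the KRS equation yields $b=\Hess(u)(c+a)$. The constancy of the left side together with the boundary degeneration of $\Hess(u)^{-1}=(g_K(X_i,X_j))$ on polytope faces (the Abreu-Guillemin conditions of Proposition~\ref{p:global}) forces $c+a=0$ and hence $b=0$.

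The main obstacle is this last rigidity step. On a trivial polytope such as $\R^n$ (giving $T^n\times\R^n$), one can have a complete flat K\"ahler soliton with $b\neq 0$, so Ricci-flatness does not imply $b=0$ without some nondegeneracy of the polytope. Under the hypotheses of the Corollary, completeness combined with the Delzant-type toric structure on $M$ implicit in the setting rules out such pathologies, and the equivalence closes. The whole argument reduces to the clean chain ${\rm rank}\leq 1 \Leftrightarrow b=0 \Leftrightarrow (g_K,J_K)$ Ricci-flat.
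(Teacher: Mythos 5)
Your proof is correct and follows essentially the same route as the paper's: the identities $X_I=\tfrac12(b-A(c))$, $X_J=\tfrac12(b+A(c))$, the case analysis ($A(c)=0\Rightarrow c=0$ by nondegeneracy, else $b=\lambda A(c)$) giving $\IP{b,c}=0$ from $b\wedge A(c)=0$, and then $R_{g_K}+|X_b|^2=\IP{b,c}=0$ from Lemma \ref{l:KRSscalar} combined with Proposition \ref{p:complete} and $R_{g_K}\geq 0$ for complete steady solitons. The only divergence is that the paper simply identifies ``Ricci-flat'' with $b=0$, so its converse direction is the one line $X_I=-X_J$, whereas you add a rigidity argument for ``Ricci-flat $\Rightarrow b=0$''; your observation that this implication genuinely requires some nondegeneracy of the polytope (it fails for flat $T^n\times\R^n$, where $b\neq 0$ is possible with $g_K$ flat) is a legitimate subtlety that the paper's phrasing glosses over, though your own resolution of it via boundary behaviour of $\Hess(u)$ is only sketched and would need the polytope to have a vertex to close rigorously.
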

\begin{proof} By Theorem~\ref{t:Atypeconstruction} and Lemma~\ref{toric-soliton}, the soliton Killing vector fields $X_I, X_J$ of $(g, I, J)$ are related to the soliton Killing vector field $b$ of $(g_K, J_K)$ by
\[ X_J= \tfrac{1}{2} \left(b+ A(c)\right), \qquad X_I = \tfrac{1}{2} \left(b - A(c)\right).\]
If $(g_K, J_K)$ is Ricci-flat, i.e. $b=0$, then $X_I = -X_J$ and thus the rank is at most one.  Conversely, if $0=X_I \wedge X_J=2A(c) \wedge b=0$, we have that $A(c)$ and $b$ are aligned. If $A(c)=0$, we have $c=0$ by the nondegeneracy assumption for $A$, and hence $\langle b,c \rangle =0$. Otherwise, we can write $b=\lambda A(c)$, and again $\langle b, c \rangle =0$ as $A$ is skew.  It follows by Lemma~\ref{l:KRSscalar} (cf. \eqref{f:scalID}) that 
$R_{g_K} + |X_b|^2=0$.  Since $g$ is assumed complete, $g_K$ is complete by Proposition~\ref{p:complete}, hence by (\cite{chow2023Ricci} Theorem 2.14, cf. \cite{chen2009strong}), $R_{g_K} \geq 0$.  We thus conclude that $X_b=0$, i.e. $g_K$ is a Ricci-flat K\"ahler metric. \end{proof} 

\begin{prop}\label{p:constantscal} Let $(M, g, I, J, f)$ be a GKRS which is an $A$-deformation of a locally toric KRS $(M, g_K, J_K, f_K)$.  Then
\begin{align*}
    R_g^{H,f} = - \IP{b,c} = R^{0,f_K}_{g_K}.
\end{align*}
\end{prop}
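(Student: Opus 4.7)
The plan is to bootstrap off the calculation already carried out for the K\"ahler side. By Lemma \ref{l:KRSscalar} the second equality $R^{0,f_K}_{g_K}=-\IP{b,c}$ is immediate, and both sides of the asserted identity are constants on their respective domains---the former by the standard identity for generalized Ricci solitons \cite{OSW}, the latter as in Lemma \ref{l:KRSscalar}. It therefore suffices to produce a pointwise identity identifying $R^{H,f}_g$ with $R^{0,f_K}_{g_K}$ on the dense open set $\mathring{M}$, which I would achieve by pivoting through the partial Ricci potential $\Psi_1=-\log(F^n/(n!\,dV_g))$ appearing in Remark \ref{r:scalarhistory} and Corollary \ref{c:GKRSscalar}.

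The linchpin is the explicit identity $f=\Psi_1+f_K+\mathrm{const}$. To establish it, first compute $\sqrt{\det g}$ in angular/momentum coordinates $(t,\mu)$: the block-diagonal form \eqref{g} gives $\det g=\det({\bf\Psi}^s)\cdot\det(({\bf\Psi}^{-1})^s)$, and the relation ${\bf\Psi}({\bf\Psi}^{-1})^s{\bf\Psi}^T={\bf\Psi}^s$ from \eqref{linear-algebra} then reduces this to $\sqrt{\det g}=\det\Hess(u)/\det(\Hess(u)+A)$. Since $F^n=n!\,d\mu\wedge dt$ in these coordinates, we obtain $\Psi_1=\log\det\Hess(u)-\log\det(\Hess(u)+A)+\mathrm{const}$, and comparison with the explicit formula for $f$ in Remark \ref{r:GKRSpotential} together with $f_K=-\IP{b,\mu}$ yields the claimed identity.

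With $f=\Psi_1+f_K+\mathrm{const}$ in hand, expanding $R^{H,f}_g$ using $|\nabla f|^2-|\nabla\Psi_1|^2=\IP{\nabla(f+\Psi_1),\nabla f_K}$ gives the key computation
\[
R^{H,f}_g = R^{H,\Psi_1}_g + 2\bigl(\Delta_g f_K - \IP{\nabla_g\Psi_1,\nabla_g f_K}_g\bigr) - |\nabla_g f_K|^2_g.
\]
The Bakry--\'Emery combination in parentheses is the divergence of $\nabla_g f_K$ with respect to the measure $e^{-\Psi_1}\,dV_g=F^n/n!$. Since $\det g_K=1$ in these coordinates (by formula \eqref{g_K}), $dV_{g_K}$ also equals $F^n/n!$. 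Combined with the fact that $g$ and $g_K$ agree on the horizontal $(d\mu,d\mu)$ block of \eqref{momentum-angle} and that $f_K$ depends only on $\mu$, this yields both $\nabla_g f_K=\nabla_{g_K} f_K$ and, crucially, $\Delta_g f_K-\IP{\nabla_g\Psi_1,\nabla_g f_K}_g=\Delta_{g_K} f_K$.

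Combining with the identity $R^{H,\Psi_1}_g=R_{g_K}$ from the proof of Corollary \ref{c:GKRSscalar} then gives $R^{H,f}_g=R_{g_K}+2\Delta_{g_K}f_K-|\nabla_{g_K}f_K|^2_{g_K}=R^{0,f_K}_{g_K}$, and the claimed value $-\IP{b,c}$ follows from Lemma \ref{l:KRSscalar}; the equality extends from $\mathring{M}$ to $M$ by continuity. The main obstacle is the verification of the pivotal identity $f=\Psi_1+f_K+\mathrm{const}$, which relies on the linear-algebra reduction of $\det(({\bf\Psi}^{-1})^s)$; once that is in place, the compatibility of measures $e^{-\Psi_1}dV_g=dV_{g_K}$ makes the remainder of the argument essentially automatic.
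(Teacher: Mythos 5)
Your proposal is correct, and its skeleton coincides with the paper's: both arguments hinge on the identity $f=\Psi+f_K+\mathrm{const}$ (the paper's equation \eqref{potentials}, obtained exactly as you do from Remark \ref{r:GKRSpotential} and the determinant identity $\sqrt{\det g}=\det\Hess(u)/\det(\Hess(u)+A)$), on the identification $R^{H,\Psi}_g=R_{g_K}$ from Corollary \ref{c:GKRSscalar}, and on reducing everything to the comparison $\Delta_g f_K-\IP{\nabla\Psi,\nabla f_K}_g=\Delta_{g_K}f_K$. Where you genuinely diverge is in how that last identity is established. The paper proves the equivalent statement $\Delta_g\varphi-\Delta_{g_K}\varphi=\IP{d\Psi,d\varphi}_g$ for $\tor$-invariant $\varphi$ by a half-page coordinate computation: it expands $\Delta_g\varphi$ via $\omega_J^{-1}$, invokes the explicit Lee form formula of Lemma \ref{Lee-forms} (specifically the fourth line of \eqref{long-theta}), and cancels terms using the matrix identities \eqref{linear-algebra}. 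You instead observe that the Bakry--\'Emery combination is the divergence of $\nabla_g f_K$ with respect to $e^{-\Psi}dV_g=F^n/n!=dV_{g_K}$ (using $\det g_K=1$ in angular/momentum coordinates), and that since the horizontal blocks of $g^{-1}$ and $g_K^{-1}$ coincide and $f_K$ depends only on $\mu$, this weighted divergence is literally $\Delta_{g_K}f_K$. This is a cleaner and more conceptual route that bypasses the Lee form entirely; its only cost is that it is stated for $f_K$ rather than for an arbitrary $\tor$-invariant function, but that is all the proposition needs (and in any case your argument applies verbatim to any $\tor$-invariant $\varphi$). The remaining steps --- the constancy of $R^{H,f}_g$ from \cite{OSW}/\cite{ASUScal}, the value $-\IP{b,c}$ from Lemma \ref{l:KRSscalar}, and the extension by continuity --- match the paper.
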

\begin{proof} As mentioned above, a GKRS satisfies, $R^{H,f}_g = const$ by \cite{ASUScal}.  For a toric KRS, we have $R^{0, f_K}_{g_K}= -\IP{b, c}$ by Lemma~\ref{l:KRSscalar}.  We will show $R^{H,f}_g=R^{0, f_K}_{g_K}$.  First, by \eqref{omega} we have
\begin{equation}\label{Psi-toric}
     \Psi =-\log(dV_F/dV_g) = -\log\left(\frac{\det({\rm Hess}(u) + A)}{\det({\rm Hess}(u))}\right),
\end{equation}  
so that, by Remark~\ref{r:GKRSpotential},
\begin{equation}\label{potentials} f = \Psi + f_K + const.\end{equation}
By  the proof of Corollary~\ref{c:GKRSscalar} we have
\[
\begin{split}R^{0,f_K}_{g_K}&=R_{g_k} + 2\Delta_{g_K} f_K -|df_K|_{g_K}^2 \\
&=-\sum_{i,j=1}^n{\bf H}^K_{ij,ij}+2\Delta_{g_K} f_K -|df_K|_{g_K}^2 \\
&= R^{H, \Psi}_g +2\Delta_{g_K} f_K -|df_K|_{g_K}^2 \\
&= R^{H, f}_g  + 2\Delta_g(\Psi-f) + |df|^2_g -|d\Psi|^2_g + 2\Delta_{g_K} f_K -|df_K|_{g_K}^2 \\
&= R_g^{H,f} - 2 (\gD_g - \gD_{g_K}) f_K + 2 \IP{d \Psi, d f_K}_g,
\end{split} \]
where for the last line we used \eqref{potentials} and $|df_K|_{g_K} = |df_K|_g$, see \eqref{g_K} and \eqref{g}.

We now show that for any $\tor$-invariant smooth function $\varphi$, the following equality holds
\begin{equation}\label{laplacians} \Delta_g \varphi -\Delta_{g_K}\varphi  = \IP{d\Psi, d\varphi}_{g}.\end{equation}
Applied to $f_K$,  we thus get $R^{H,f}_g = R^{0,f_K}_{g_K}=-\IP{b,c}$ as wanted.  Using momentum/angular coordinates, we first observe 
\begin{align*}
\Delta_{g_K}\varphi &= -\IP{dJ_K d\varphi, F^{-1}}  \\
&= \IP{\sum_{i,j=1}^nd(\varphi_{, i} {\rm Hess}(u)^{-1}_{ji}dt_j), \sum_{j=1}^n\frac{\partial}{\partial \mu_j} \wedge \frac{\partial}{\partial t_j} }\\
&=\sum_{i, j=1}^n \left(\varphi_{,ij}({\rm Hess}(u))^{-1}_{ij}+ \varphi_{,i}({\rm Hess}(u))^{-1}_{ji,j}\right).
\end{align*}
Next we observe
\begin{align*}
\Delta_g \varphi  - \langle d\varphi, \theta_J \rangle_g&= -\IP{dJd\varphi, \omega_J^{-1}} \\
&= \IP{\sum_{i,j=1}^n d(\varphi_{,i} {\bf \Psi}^{-1}_{ji}dt_j), \sum_{r,s=1}^n \left(({\rm Hess}(u))^{-1} {\bf \Psi}\right)_{rs}\frac{\partial}{\partial \mu_r} \wedge \frac{\partial}{\partial t_s}  } \\
&=\sum_{i, j=1}^n \varphi_{,ij}({\rm Hess}(u))^{-1}_{ij} + \sum_{i,r,k, s=1}^n \varphi_{, i}({\rm Hess}(u))^{-1}_{rk} {\bf \Psi}_{ks}{\bf \Psi}^{-1}_{si, r} \\
&=\sum_{i, j=1}^n \varphi_{,ij}({\rm Hess}(u))^{-1}_{ij} + \sum_{i,r,k=1}^n \varphi_{, i}({\rm Hess}(u))^{-1}_{rk,r} \left({\rm Hess}(u){\bf \Psi}^{-1}\right)_{ki} \\
&= \sum_{i, j=1}^n \left(\varphi_{,ij}({\rm Hess}(u))^{-1}_{ij} + \varphi_{,i}({\rm Hess}(u))^{-1}_{ji,j}\right) +\sum_{i,r=1}^n \left(({\bf \Psi}^T)^{-1}A({\rm Hess}(u))^{-1}_{,r} \right)_{ir} \varphi_{, i} \\
&= \Delta_{g_K}\varphi +\sum_{i,r=1}^n\left(({\bf \Psi}^T)^{-1}A({\bf \Psi}^s)^{-1}_{, r}\right)_{ir}\varphi_{,i}.
\end{align*}
By the fourth line of \eqref{long-theta}, and using \eqref{g} and \eqref{Psi-toric}, we have
\[
\begin{split}
\IP{d\varphi, \theta_J}_g &= \IP{d\varphi, d\Psi}_g + \sum_{j,r=1}^n \left(({\bf \Psi}^{s})^{-1}A({\bf \Psi}^T)^{-1} {\bf \Psi}^{s}_{, r}({\bf \Psi}^s)^{-1} \right)_{jr}\varphi_{, j} \\
&= \IP{d\varphi, d\Psi}_g - \sum_{j,r=1}^n \left(({\bf \Psi}^{s})^{-1}A({\bf \Psi}^T)^{-1} {\bf \Psi}^{s}({\bf \Psi}^s)^{-1}_{,r} \right)_{jr}\varphi_{, j}. 
\end{split} \]
Notice that for any nondegenerate matrix ${\bf \Psi}= {\bf \Psi}^s + A$ we have
\[({\bf \Psi}^s)^{-1} A ({\bf \Psi}^T)^{-1}{\bf \Psi}^s = ({\bf \Psi}^s)^{-1}({\bf \Psi}^s- {\bf \Psi}^T) ({\bf \Psi}^T)^{-1}{\bf \Psi}^s=({\bf \Psi}^T)^{-1} A.\]
We thus get
\[ \IP{d\varphi, \theta_J}_g = \IP{d\varphi, d\Psi}_g - \sum_{i,r=1}^n\left(({\bf \Psi}^T)^{-1} A ({\bf \Psi}^s)^{-1}_{, r}\right)_{ir} \varphi_{,i}.\]
Combining the above computations yields \eqref{laplacians} and hence the proposition.
\end{proof}
\begin{rmk}
Exponentiating \eqref{potentials}, we get 
\[ {\rm vol}:=e^{-f} dV_g = \lambda e^{-f_K} dV_F, \qquad \lambda \in \R_{>0}.\]
This provides a formal conceptual explanation for the conclusion of Proposition~\ref{p:constantscal}, via the infinite dimensional GIT setup in \cite{goto-21, ASUScal}.  Indeed, the above equality shows that $(g_K, J_K, f_K)$ and $(g, I, J, f)$ are zeroes of the momentum map on the space of generalized complex structures ${\mathbb I}$ compatible with the generalized complex structure $\JJ_F$ of $F$ and the volume form  $ {\rm vol}$. In this setup, $\IP{b, c}$ is a topological constant such that the normalized generalized K\"ahler scalar curvature
\[ {\mathbb I} \to \Gscal({\mathbb I}, \JJ, {\rm vol}) + \IP{b,c}. \]
is the corresponding momentum map, viewed via the $L^2(M, {\rm vol})$-product as a linear map on the space $C^{\infty}(M)/\R$ of Hamiltonian automorphisms with respect to $\JJ_F$.
\end{rmk}

\subsection{Toric rank \texorpdfstring{$1$}{1} GKRS}
We illustrate the theory above by constructing examples using the Gibbons-Hawking ansatz.  Fix $(M^4, g, I, J)$ a complete toric GKRS which is rank $1$, in the sense of Definition \ref{d:rank}.  Such structures are described by the Gibbons-Hawking ansatz as in (\cite{SU2} Theorems 1.2 and 1.3).  The hypothesis of an extra circle action on the space strongly restricts the possibilities that occur.  We describe the relevant spaces in the next lemma, assuming detailed familiarity with \cite{SU2}, which is also reviewed in the next section.

\begin{prop} \label{p:toricrank1class} A regular toric rank $1$ GKRS $(M^4, g, I, J)$ with Ricci curvature bounded below and finite topology is isometric to:
\begin{enumerate}
    \item A space described in \cite{SU2} Theorem 1.2(1).  In this setting $N(a_+, 0)$ is canonically diffeomorphic to $\mathbb C \times \mathbb R$, and $\{z_i\} \in \{0\} \times \mathbb R$.  If $\{z_i\} \neq \emptyset$ then $|k_+| = 1, l_+ = 0$.
    \item A space described in \cite{SU2} Theorem 1.2(2).
    \item A space described in \cite{SU2} Theorem 1.2(3).  In this setting $N(a_+,a_-)$ is canonically diffeomorphic to $S^2 \times \mathbb R$, and  moreover $\{z_i\} \in \{N,S\} \times \mathbb R$.  If $\{z_i\} \cap \{N\} \times \mathbb R \neq \emptyset$ then $\brs{k_+} = 1, l_+ = 0$.  If $\{z_i\} \cap \{S\} \times \mathbb R \neq \emptyset$ then $\brs{k_-} = 1, l_- = 0$.
    \end{enumerate}
    
    \begin{proof} First fix $(M^4, g, I, J)$ a complete toric rank $1$ GKRS.  In particular, we can describe the metric using the Gibbons-Hawking ansatz, and there exists a nontrivial biholomorphic Killing field $Z$ which commutes with $X_I$ and $X_J$ and can be chosen without loss of generality to be horizontal.  From the construction it is clear then that $Z$ preserves the reduced data $W, \eta$, and $h$.
    
    We now analyze the three cases of \cite{SU2} Theorem 1.2 in turn.  In case (1) we refer to \cite{SU2} \S 5.4.1.  In particular the moment image is canonically identified with $\mathbb C \times \mathbb R$, with the metric described in polar coordinates
    \begin{align*}
        (\mu_1, \rho, \mu_-) \in S^1 \times \mathbb R_{>0} \times \mathbb R, \qquad \rho = \exp(a_+ \mu_+/2)
    \end{align*}
    by
    \begin{align*}
        h_{a_+, 0} = \frac{4}{1 + \rho^2} \left( \frac{\rho^2}{1 + \rho^2} d \mu_1^2 + \frac{4}{a_+^2} d \rho^2 + 4 d \mu_-^2 \right).
    \end{align*}
    The metric completion extends this metric across the line $\{\rho = 0\}$ to have a cone singularity of angle $\pi |a_+|$.  The only vector fields preserving $h_{a_+,0}$ are the canonical rotation in the $\mathbb C$ factor and translation in the $\mu_-$ factor.  Now, as explained in \cite{SU2} \S 6.1, we can express $W = V \til{W}$, where $V$ is a positive harmonic function for an explicit metric $\til{h}$ conformally related to $h_{a_+,0}$, and $\til{W}$ is a certain canonical function (cf. \cite{SU2} Lemma 4.10).  As shown in \cite{SU2} Proposition 6.1, such functions are identified with a sum of Greens functions and possibly a positive constant.  Noting that the metric $\til{h}$ has the same symmetries as $h_{a_+,0}$, it follows that $W$ can only be invariant if the corresponding Greens functions have poles on the axis $\{\rho = 0\}$.  However, as explained in \cite{SU2} Proposition 6.6, the poles must lie in the smooth locus of $N(a_+, 0)$.  Thus if there are Greens function factors we force $|k_+|= 1, l_+ = 0$, as claimed.

    To establish case (2), it suffices to observe that all examples described in \cite{SU2} Theorem 1.2(2) do indeed admit further Killing fields as described above, i.e. rotation in the $\mathbb C$ factor and translation in the $\mu_-$ factor.

    For case (3), we refer to \S 5.4.2 of \cite{SU2}, noting that in this case the moment map image is defined using coordinates
    \begin{align*}
        ( \mu_1, \rho_1, \rho_2) \in S^1 \times \mathbb R_{> 0} \times \mathbb R > 0,
    \end{align*}
    where
    \begin{align*}
        \rho_1 =&\ \frac{e^{-a_-\mu_-/2}}{a_-^2 e^{a_+ \mu_+} + a_+^2 e^{-a_- \mu_-}}, \qquad
        \rho_2 = \frac{e^{a_+ \mu_+/2}}{a_-^2 e^{a_+ \mu_+} + a_+^2 e^{-a_- \mu_-}},
    \end{align*}
    by
    \begin{align*}
        h_{a_+,a_-} = \frac{4}{\rho_1^2 + \rho_2^2} \left( \frac{\rho_1 \rho_2}{\rho_1^2 + \rho_2^2} d \mu_1^2 + \frac{4}{a_-^2} d \rho_1^2 + \frac{4}{a_+^2} d \rho_2^2 \right).
    \end{align*}
    As explained in \cite{SU2} Proposition 5.13 the metric completion is isomorphic to $S^2 \times \mathbb R$ with the metric extending  with cone singularities of angles $\pi |a_-|$ and $\pi |a_+|$ across the subsets $\{\rho_1 = 0\}$ and $\{\rho_2 = 0\}$, which in turn correspond to the degeneracy loci of $\gs$.  This metric admits a Killing field given by rotation in the $\mu_1$ direction.  Arguing as in case (1) using \cite{SU2} Proposition 6.4, the possible functions $W$ are equivalent to positive harmonic functions $V$ for an explicit metric conformally related to $h_{a_+,a_-}$.  Such functions are shown to be sums of Greens functions, a positive constant, and a multiple of an anomalous solution to the Laplace equation in this setting.  The anomalous solution is invariant under the $\mu_1$-rotation (cf. \cite{SU2} Proposition 6.4 (2)).  Moreover, the Greens function terms are invariant under $\mu_1$-rotation if and only if the points lie on the axes $\{\rho_1=  0\} = \{N\} \times \mathbb R, \{\rho_2 = 0\} = \{S\} \times \mathbb R$. Again we know by \cite{SU2} Proposition 6.6 that the poles of the Greens functions must lie in the smooth locus of $N(a_+,a_-)$.  As in case (1) this forces $\brs{k_{\pm}} = 1$ and $l_{\pm} = 0$ according to the statement.
\end{proof}
\end{prop}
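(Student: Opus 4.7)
The plan is to leverage the general rank $1$ classification from \cite{SU2} Theorem 1.2, which realizes any complete rank $1$ GKRS via a Gibbons--Hawking ansatz in terms of reduced data $(W,\eta,h)$ on a three-manifold $N$, possibly decorated with a finite collection of ``pole'' points $\{z_i\}$ where the moment map degenerates. Under the rank $1$ hypothesis of the present proposition, our $(M^4,g,I,J)$ admits such a reduction, and the metric $h$ on $N$ belongs to one of three explicit families indexed by parameters $a_\pm$.

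The additional structure here comes from the toric hypothesis, which supplies a nontrivial biholomorphic Killing field $Z$, independent of the canonical soliton fields $X_I, X_J$ and commuting with both. The first step is to argue that $Z$ may be chosen horizontal for the Gibbons--Hawking reduction (by subtracting an appropriate linear combination of $X_I, X_J$, which are vertical), and that the reduced field descends to a Killing field of $(N,h)$ preserving both $W$ and $\eta$.

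With this in hand, the proof becomes a case-by-case analysis matching the trichotomy of \cite{SU2} Theorem 1.2. For each case I would: (i) identify the underlying smooth three-manifold $N$ with its canonical model ($\mathbb{C}\times\mathbb{R}$ in case (1), $S^2\times \mathbb{R}$ in case (3)); (ii) compute the full isometry group of the explicit reduced K\"ahler metric $h_{a_+,0}$ or $h_{a_+,a_-}$ on this model; (iii) invoke the explicit structure of admissible $W$ given in \cite{SU2} Propositions 6.1 and 6.4 (a positive harmonic function for an explicit conformally related metric, equal to a sum of Greens functions plus a positive constant, and, in case (3), possibly an anomalous term); and (iv) observe that $Z$-invariance of $W$ together with isolated poles forces the poles $\{z_i\}$ to concentrate on the fixed axes of $Z$ (the $\mathbb{R}$-factor $\{\rho=0\}$ in case (1), and the two axes $\{N\}\times\mathbb{R}, \{S\}\times\mathbb{R}$ in case (3)). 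Finally, combining this localization of poles with the regularity constraint of \cite{SU2} Proposition 6.6 that poles must lie in the smooth locus of $N(a_+, a_-)$, I would extract the announced restrictions $|k_\pm| = 1$, $l_\pm = 0$. Case (2) is immediate since the examples of \cite{SU2} Theorem 1.2(2) visibly carry the required extra symmetry.

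The main obstacle will be case (3): both the presence of the anomalous solution and the existence of two distinguished axes require careful bookkeeping to correctly attribute each parameter restriction to the axis hosting the relevant poles. In particular, one must verify that the anomalous solution is genuinely $Z$-invariant (so it imposes no obstruction) while disentangling the contributions of poles on $\{N\}\times \mathbb{R}$ from those on $\{S\}\times\mathbb{R}$ to separately pin down the constraints on $(k_+,l_+)$ and $(k_-,l_-)$.
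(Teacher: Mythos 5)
Your proposal follows essentially the same route as the paper's proof: reduce via the Gibbons--Hawking ansatz of \cite{SU2}, make the extra toric Killing field $Z$ horizontal so it preserves the reduced data $(W,\eta,h)$, identify the isometries of the explicit model metrics $h_{a_+,0}$ and $h_{a_+,a_-}$ in each case of \cite{SU2} Theorem 1.2, use $Z$-invariance of $W$ (via the Greens-function description in \cite{SU2} Propositions 6.1 and 6.4, with the anomalous solution handled in case (3)) to localize the poles on the fixed axes, and then invoke \cite{SU2} Proposition 6.6 to force $|k_\pm|=1$, $l_\pm=0$. The approach and all key ingredients match; no substantive differences.
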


By Proposition \ref{p:complete}, if a complete rank 1 GKRS is symplectic-type and admits a further Killing symmetry which defines a local toric structure, then it is an $A$-deformation of a complete locally toric K\"ahler Ricci-flat metric. We detail this correspondence in the case $M=M_k^4$ is the toric resolution of $\C^2/\Z_{k+1}$:
\begin{prop} \label{p:toric-rank1} Suppose $(g_K, J_K)$ is  a multi-Euguchi-Hanson or a multi-Taub-NUT Ricci-flat toric K\"ahler metric on  $M^4_k$. Then the $A$-deformation of $(g_K, J_K)$ is a complete regular rank 1 GKRS described by item (1) or (2) of Proposition \ref{p:toricrank1class}.
\end{prop}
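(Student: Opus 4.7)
The plan is to combine the $A$-deformation construction of Theorem~\ref{t:Atypeconstruction} with the completeness result of Proposition~\ref{p:complete} to produce the GKRS, and then locate it within the classification of Proposition~\ref{p:toricrank1class}. Since $(g_K, J_K)$ is Ricci-flat, it is a steady KRS with $b = 0$, and by Lemma~\ref{l:KRS} there exists $c \in \tor^*$ such that the symplectic potential $u$ satisfies $\log\det\Hess(u) + \langle c, \nabla u \rangle = const$. Since $\dim \tor = 2$, the space $\Lambda^2 \tor \cong \R$ is one-dimensional, so any nonzero $A$ is automatically nondegenerate. Applying Theorem~\ref{t:Atypeconstruction} will yield a symplectic-type GKRS $(F, g, I, J, f)$ on $\mathring{M} \subset M^4_k$, and Proposition~\ref{p:complete} will upgrade this to a complete regular GKRS on all of $M^4_k$, using that $(g_K, J_K)$ is complete on $M^4_k$ by hypothesis.

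Next I would compute the rank using Lemma~\ref{toric-soliton}: the canonical soliton vector fields are $X_J = \tfrac{1}{2}A(c)$ and $X_I = -\tfrac{1}{2}A(c)$, so $X_I = -X_J$ and the rank is at most one by Corollary~\ref{c:rank1fromCY}. To obtain rank exactly one, I would verify $A(c) \neq 0$, which reduces to $c \neq 0$ since $A$ is nondegenerate. For both multi-Eguchi--Hanson and multi-Taub--NUT on $M^4_k$, $c$ is nonzero: this can be extracted from the explicit asymptotic behavior of the symplectic potential at infinity, since multi-Eguchi--Hanson is ALE with model $\C^2/\Z_{k+1}$ whose $c$ is an explicit nonzero constant (analogous to the computation for flat $\C^2$ carried out in the proof of Lemma~\ref{l:KRSscalar}), while multi-Taub--NUT has ALF asymptotics yielding a different but again nonzero $c$.

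The final step is to identify the resulting complete regular rank~1 GKRS on $M^4_k$ with one of the three cases of Proposition~\ref{p:toricrank1class}. Case~(3) will be ruled out because the associated reduced three-manifold there is $S^2 \times \R$, whose compact $S^2$ factor is incompatible with the non-compact ALE or ALF asymptotics inherited from the underlying K\"ahler metric, forcing the structure into case~(1) or~(2). The finer matching will proceed by comparing the Gibbons--Hawking reduction data: the soliton subcircle of $\T$ acts via $X_J$, leaving a complementary circle action that plays the role of the additional Killing symmetry in the Gibbons--Hawking ansatz of~\cite{SU2}. The topological type of the quotient three-manifold and the positions of the vanishing loci of $\gs$, which by~\eqref{J-poisson} coincide with the $\T$-fixed points of $M^4_k$ (the vertices of the Delzant polytope $\Pol$), will then determine whether case~(1) or~(2) occurs.

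The principal obstacle lies in this last identification: translating the standard hyperk\"ahler Gibbons--Hawking data (a positive harmonic function $V$ on $\R^3$ with isolated poles and possibly a constant term in the multi-Taub--NUT case) into the reduced data $(W, \eta, h)$ on the base $N$ employed in~\cite{SU2}, and pinning down the invariants $(a_\pm, \{z_i\}, k_\pm, l_\pm)$ distinguishing the two cases. A secondary technical point is the rigorous verification of $c \neq 0$ for multi-Taub--NUT on $M^4_k$ with $k \geq 1$, where the symplectic potential is less elementary and the asymptotic analysis must incorporate the ALF $S^1$-fiber at infinity.
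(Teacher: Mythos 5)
Your skeleton (Theorem~\ref{t:Atypeconstruction} plus Proposition~\ref{p:complete} for the construction, Lemma~\ref{toric-soliton} giving $X_J=\tfrac12 A(c)$, $X_I=-\tfrac12 A(c)$ when $b=0$) matches the paper, but the two points you flag as obstacles both have clean resolutions that you miss, and your substitutes are either incomplete or shaky. First, the nonvanishing of $c$: you propose to extract it from ALE/ALF asymptotics of the symplectic potential and admit this is unresolved for multi-Taub--NUT. The paper instead invokes Proposition~\ref{p:polytopecond}, which forces $\IP{\N L_j, c}=2$ on \emph{every} facet of the Delzant polytope $\Pol_k$ of $M^4_k$; since that polytope is the simplex $\{x_1\geq 0,\ x_2\geq 0\}$ cut by $k$ further hyperplanes, this pins down $c=(2,2)$ in lattice coordinates with no asymptotic analysis whatsoever, and hence $X_I=a(-1,1)$, $X_J=a(1,-1)$. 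This also hands you, for free, the fact that $X_J$ is proportional to a lattice vector and so generates an $S^1$-action --- i.e.\ the \emph{regularity} required to invoke the classification of \cite{SU2} --- a hypothesis you use implicitly ("the soliton subcircle of $\T$") but never verify. You also do not check the remaining hypotheses of Proposition~\ref{p:toricrank1class}, namely that the Ricci curvature is bounded below (the paper cites a direct computation, cf.\ Remark~\ref{r:horizontalRicci}) and that $M^4_k$ has finite topology; without these the classification cannot be applied at all.

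Second, your exclusion of case (3) via "the compact $S^2$ factor is incompatible with noncompact ALE or ALF asymptotics" is not a proof: the reduced space $S^2\times\R$ in case (3) is itself noncompact, and the total space is a completion of an $S^1$-bundle over it, so no contradiction with noncompact asymptotics is evident. The paper's argument is a one-liner: the $A$-deformation is symplectic-type by construction (the symplectic form $F=F_+$ is globally defined on $M^4_k$), and the case (3) structures are not symplectic-type, so only items (1) and (2) can occur. Finally, note that the statement only asks you to land in item (1) or (2); the finer matching of Gibbons--Hawking data $(a_\pm,\{z_i\},k_\pm,l_\pm)$ that you identify as "the principal obstacle" is not required, and the paper does not carry it out.
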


\begin{proof} By Lemma~\ref{toric-soliton}, the soliton vector fields of the $A$-deformation of the Ricci-flat K\"ahler structure on $(M^4_k, F, \T)$ are $X_J= \frac{1}{2} A(c)$ and $X_I=-\frac{1}{2}A(c)$. Notice that $(M^4, F, \T)$ is Delzant type, with a polytope $\Pol_k \subset (\R^2)^*$ obtained from the simplex $P_0=\{(x_1, x_2) \, |  \, L_1(x)=x_1 \geq 0, \, L_2(x)=x_2 \geq 0\}$  by intersection with $k$ additional hyperplanes $L_{3}(x) \geq 0, \ldots L_{k+2}(x) \geq 0$,  satisfying  $L_j(2, 2)=2$ (and of course the Delzant condition). This follows by Proposition~\ref{p:polytopecond}, which tells us that $c=(2,2)$ in the standard coordinates determined by the standard bases of the lattice $\Z^2$ circle subgroups of $\T$. We thus conclude $X_I= a(-1, 1)$ and $X_J=a(1, -1)$ in these coordinates, where $a\in \R$ parametrizes the bivector $A\in \Lambda^2 \tor \cong \R$.  This shows that $X_J$ generates a $S^1$-action, as required in \cite{SU2}.  A direct computation shows that the Ricci curvature is bounded below (cf. also Remark \ref{r:horizontalRicci}), and by construction $M$ has finite topology.  It follows that the result is described by Proposition \ref{p:toricrank1class}, and since by construction the result is symplectic-type it is described by item (1) or (2).
\end{proof}

\section{Four-dimensional rank \texorpdfstring{$2$}{2} GKRS} \label{s:reduction}

In this section we address the case of four-dimensional rank $2$ GKRS.  We begin by recalling the GK Gibbons Hawking ansatz of \cite{SU2}.  In the case the soliton is rank $2$, through a detailed local study we are lead to Proposition \ref{p:spanprop} which establishes the key property $\spn_{\mathbb R} \{I X_I, I X_J \} = \spn_{\mathbb R} \{J X_I, J X_J \}$ of \cite{boulanger2019toric}.  Together with Proposition \ref{p:4dlagrangian}, this implies that the structure is an $A$-deformation of a K\"ahler metric, and the theory of \S \ref{s:toricGKRS} and \S \ref{s:global} applies.  However we will proceed to give a direct derivation of the underlying symplectic potential and associated Monge-Amp\`ere equation to further illustrate its special properties in this setting.

\subsection{GK Gibbons Hawking Ansatz}\label{sec: GHansatz}

We fix a nondegenerate rank $2$ GKRS $(M^{4}, g, I, J)$, with vector fields $X_I, X_J$ preserving all structure.  In this subsection we recall the fundamental construction behind \cite{SU2}, which describes the local geometry of a four-dimensional generalized K\"ahler manifold with a biholomorphic Killing field.  Here we have two manifestations of this coming from $X_I$ and $X_J$.  First note that we have local $\R^3$-valued momentum maps,  $\nu^I$ and $\nu^J$,   with respect to the symplectic $2$-form $\Omega=\sigma^{-1}$ defined over the open dense subset where $\sigma \neq 0$, defined using the closed $1$-forms
\begin{align*}
i_{X_I}\Omega=d\nu^I_1,\quad i_{IX_I}\Omega=d\nu^I_2,\quad i_{JX_I}\Omega=d\nu^I_3,\\
i_{X_J}\Omega=d\nu^J_1,\quad i_{IX_J}\Omega=d\nu^J_2,\quad i_{JX_J}\Omega=d\nu^J_3.
\end{align*}
We will furthermore define the functions
\begin{align*}
    \nu^I_+ = \tfrac{1}{2} (\nu^I_2 + \nu^I_3), \quad \nu^I_- = \tfrac{1}{2} (\nu^I_2 - \nu^I_3), \quad \nu^J_+ = \tfrac{1}{2} (\nu^J_2 + \nu^J_3), \quad \nu^J_- = \tfrac{1}{2} (\nu^J_2 - \nu^J_3).
\end{align*}
Observe that $\nu^I_{-}, \nu^J_-$ are momenta for $F_+$, while $\nu^I_+, \nu^J_+$ are momenta for $F_-$.  Furthermore, define smooth functions
\begin{align*}
W_I = g(X_I,X_I)^{-1}, \qquad W_J = g(X_J,X_J)^{-1},
\end{align*}
and connection one-forms
\begin{align*}
\eta_I(Z) := W_I g(X_I, Z), \qquad \eta_J(Z) := W_J g(X_J, Z).
\end{align*}
These connections define splittings \begin{align*}
    TM = \mathbb R X_I \oplus \mathcal H_I = \mathbb RX_J \oplus \mathcal H_J.
\end{align*}
There exist canonical frames for $\mathcal H_I$ and $\mathcal H_J$, in particular
\begin{align*}
    \mathcal H_I = \IP{IX_I, JX_I, KX_I}, \quad \mathcal H_J = \IP{ IX_J, JX_J, KX_J}, \quad 
    K := \tfrac{1}{2} [I, J].
\end{align*}
The metric can be recovered from these data and is expressed using the angle function $p$:
\begin{align*}
p = - \tfrac{1}{4} \tr (IJ),
\end{align*}
for which we recall the key useful four-dimensional fact
\begin{align} \label{f:IJcomm}
\{I, J\} = - 2 p \mathbf{1}.
\end{align}
In fact, using these objects, we obtain the following descriptions of the metric and symplectic form $\Omega$.  First, the metric is
\begin{gather} \label{f:metricformula}
\begin{split}
    g =&\ W_I h_I + W_I^{-1} \eta_I^2, \quad h_I = (1-p^2) d (\nu^I_1)^2 + 2 (1 - p) (d\nu^I_+)^2 + 2 (1 + p) (d \nu^I_-)^2,\\
    g =&\ W_J h_J + W_J^{-1} \eta_J^2, \quad h_J = (1-p^2) (d \nu^J_1)^2 + 2 (1 - p) (d \nu^J_+)^2 + 2 (1 + p) (d\nu^J_-)^2,
\end{split}
\end{gather}
where $\eta_I$, $\eta_J$ are $1$-forms such that 
\begin{align*}
    d \eta_I =&\ \left( (W_I)_{\nu^I_3} + (p W_I)_{\nu^I_2} \right) d \nu^I_1 \wedge d \nu^I_2 - ((W_I)_{\nu^I_2} + (p W_I)_{\nu^I_3}) d \nu^I_1 \wedge d \nu^I_3 + (W_I)_{\nu^I_1} d \nu^I_2 \wedge d \nu^I_3,\\
    d \eta_J =&\ \left( (W_J)_{\nu^J_3} + (p W_Y)_{\nu^J_2} \right) d \nu^J_1 \wedge d \nu^J_2 - ((W_J)_{\nu^J_2} + (p W_J)_{\nu^J_3}) d \nu^J_1 \wedge d \nu^J_3 + (W_J)_{\nu^J_1} d \nu^J_2 \wedge d \nu^J_3,
\end{align*}
whereas $\Omega$ can be expressed as
\begin{gather} \label{f:OmegaGH}
\begin{split}
    \Omega =&\ - (d \nu^I_1 \wedge \eta_I + W_I d \nu^I_2 \wedge d \nu^I_3) = - (d \nu^J_1 \wedge \eta_J + W_J d \nu^J_2 \wedge d \nu^J_3)\\
    =&\ - d \nu^I_1 \wedge \eta_I + 2 W_I d \nu^I_+ \wedge d \nu^I_- = - d \nu^J_1 \wedge \eta_J + 2 W_J d \nu^J_+ \wedge d \nu^J_-.
\end{split}
\end{gather} 
Furthermore, as a consequence of the integrability condition for $d\eta_I$ and $d\eta_J$,  we obtain an a priori elliptic PDE that $W_X$ and $W_Y$ must satisfy:
\begin{gather}\label{eq: W}
\begin{split}
    0 =&\ (W_I)_{\nu^I_1 \nu^I_1} + \tfrac{1}{2} \left( ((1 + p) W_I)_{\nu^I_+ \nu^I_+} + ((1 - p) W_I)_{\nu^I_- \nu^I_-} \right),\\
    0 =&\ (W_J)_{\nu^J_1 \nu^J_1} + \tfrac{1}{2} \left( ((1 + p) W_J)_{\nu^J_+ \nu^J_+} + ((1 - p) W_J)_{\nu^J_- \nu^J_-} \right).
\end{split}
\end{gather}
The above provides a local description of all nondegenerate GKRS's, reminiscent of the Gibbons-Hawking description of hyperK\"ahler $4$-manifolds with a tri-holomorphic Killing vector field.

\subsection{Rank \texorpdfstring{$2$}{2} solitons as \texorpdfstring{$A$}{A}-deformations}

In this subsection we refine the construction of the previous subsection to give a more precise description of the geometry in terms of the rank $2$ action.  The key point is to establish the property $I \tor = J \tor$ of \cite{boulanger2019toric}, which then leads to the proof of Theorem \ref{t:4dmainthm}.  To begin we give a description of the Ricci potential $\Phi$ in terms of the momenta $\nu^I$ and $\nu^J$.

\begin{prop}\label{p:expressions with Hamiltonians} (cf. \cite[Proposition 4.8]{SU2}) Let $(M^{4}, g, I, J)$ be a nondegenerate GKRS.  Then we have 
\begin{align*}
    \Phi =&\ 2 \nu^J_3 - 2 \nu^I_2 + \mbox{const} = - 2 (\nu^I_+ + \nu^I_- - \nu^J_+ + \nu^J_-) + const, \\
    \N f =&\ \tfrac{1}{2} W_I^{-1} (\del_{\nu^I_+} - \del_{\nu^I_-}) - \tfrac{1}{2} W_J^{-1} (\del_{\nu^J_+} + \del_{\nu^J_-}),\\
    df =&\ (1 - p) d \nu^I_+ - (1 + p) d \nu^I_- - (1 - p) d \nu^J_+ - (1 + p) d \nu^J_-.
\end{align*}
\begin{proof} As the given GKRS is nondegenerate, we let $\Omega = \gs^{-1}$.  By Proposition \ref{p:GKRSsymmetries} we know that $V_I$ and $V_J$ preserve $\Omega$, with local Hamiltonian potentials $- \nu_2^I, - \nu_3^J$ respectively.  On the other hand, by Proposition \ref{p:transgression} we know that $2(V_I - V_J)$ is $\gs$-Hamiltonian with potential function $\Phi$, so the equation for $\Phi$ follows. 

Turning to $f$, we first express
\begin{align*}
    \N f = - V_I - V_J,
\end{align*}
so that
\begin{align*}
    i_{\N f} \Omega = - i_{V_I} \Omega - i_{V_J} \Omega = - d (\chi_{V_I} + \chi_{V_J}) = d( \nu^I_2 + \nu^J_3) = d (\nu^I_+ + \nu^I_- + \nu^J_+ - \nu^J_-).
\end{align*}
We now use the expressions (\ref{f:OmegaGH}) for $\Omega$ in momentum  coordinates
\begin{align*}
    \Omega = - d \nu^I_1 \wedge \eta_I + 2 W_I d \nu^I_+ \wedge d \nu^I_- = - d \nu^J_1 \wedge \eta_J + 2 W_J d \nu^J_+ \wedge d \nu^J_-,
\end{align*}
to obtain
\begin{align*}
    \N f = \tfrac{1}{2} W_I^{-1} (\del_{\nu^I_+} - \del_{\nu^I_-}) - \tfrac{1}{2} W_J^{-1} (\del_{\nu^J_+} + \del_{\nu^J_-}),
\end{align*}
Finally, using the explicit form (\ref{f:metricformula}) of the metric we arrive at
\begin{align*}
    d f = (1 - p) d \nu^I_+ - (1 + p) d \nu^I_- - (1 - p) d \nu^J_+ - (1 + p) d \nu^J_-,
\end{align*}
as claimed.
\end{proof}
\end{prop}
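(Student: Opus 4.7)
The overall plan is to express $\Phi$, $\nabla f$, and $df$ in terms of the soliton vector fields $V_I, V_J$ from Proposition~\ref{p:GKRSsymmetries} together with their $\Omega$-Hamiltonian potentials, and then translate into the Gibbons-Hawking momentum coordinates $\nu^I_\bullet, \nu^J_\bullet$ via~\eqref{f:OmegaGH}--\eqref{f:metricformula}. The key preliminary observation is that Proposition~\ref{p:GKRSsymmetries} gives $L_{V_I}\sigma = L_{V_J}\sigma = 0$, so on the nondegenerate locus both $V_I$ and $V_J$ are Hamiltonian with respect to $\Omega = \sigma^{-1}$. The identities $X_I = IV_I$ and $X_J = JV_J$ combined with the definitions of $\nu^I_2, \nu^J_3$ yield Hamiltonian potentials $\iota_{V_I}\Omega = -\iota_{IX_I}\Omega = -d\nu^I_2$ and analogously $\iota_{V_J}\Omega = -d\nu^J_3$.

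For the $\Phi$ formula I will combine Proposition~\ref{p:transgression}'s identity $\sigma d\Phi = \theta_I^\sharp - \theta_J^\sharp$ with $\theta_I^\sharp - \theta_J^\sharp = 2(V_I - V_J)$, which is immediate from the definitions $V_I = \tfrac12(\theta_I^\sharp - \nabla f)$, $V_J = \tfrac12(\theta_J^\sharp - \nabla f)$. Dualizing via $\sigma = \Omega^{-1}$ gives $d\Phi = \iota_{2(V_I-V_J)}\Omega = -2d\nu^I_2 + 2d\nu^J_3$, hence $\Phi = 2\nu^J_3 - 2\nu^I_2 + \mathrm{const}$; the equivalent $(+,-)$ form then follows from $\nu^I_2 = \nu^I_+ + \nu^I_-$ and $\nu^J_3 = \nu^J_+ - \nu^J_-$.

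For $\nabla f$ the plan is to use the identity $\nabla f = -V_I - V_J$, equivalent to $\theta_I^\sharp + \theta_J^\sharp = 0$; I expect this to be available here either as a direct consequence of the $4$-dimensional nondegenerate GK identity relating the Lee forms, or as a computation using Proposition~\ref{p:4dlagrangian}. Granting it, $\iota_{\nabla f}\Omega = d\nu^I_2 + d\nu^J_3 = d(\nu^I_+ + \nu^I_- + \nu^J_+ - \nu^J_-)$, and I invert this two ways using $\Omega = -d\nu^I_1 \wedge \eta_I + 2W_I\, d\nu^I_+ \wedge d\nu^I_-$ and the symmetric $J$-expression: since $d\nu^I_1 \wedge \eta_I$ pairs trivially with $d\nu^I_\pm$, matching coefficients of $d\nu^I_\pm$ forces the horizontal $I$-component of $\nabla f$ to be $\tfrac12 W_I^{-1}(\partial_{\nu^I_+} - \partial_{\nu^I_-})$, and likewise the $J$-component to be $-\tfrac12 W_J^{-1}(\partial_{\nu^J_+} + \partial_{\nu^J_-})$; the two presentations must agree by the general uniqueness of the Hamiltonian vector field. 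The formula for $df$ then follows directly by contracting this vector field with~\eqref{f:metricformula}, using that on the horizontal distribution $g(\partial_{\nu^I_\pm}, \cdot) = 2(1 \mp p)W_I\, d\nu^I_\pm$.

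The main obstacle I anticipate is the verification of $\theta_I^\sharp + \theta_J^\sharp = 0$ in the nondegenerate $4$-dimensional GKRS setting; once this is in hand, the rest is a careful bookkeeping between the two Gibbons-Hawking presentations (for $X_I$ and for $X_J$) of the same symplectic form $\Omega$ and metric $g$.
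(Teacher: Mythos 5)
Your proposal is correct and follows essentially the same route as the paper: identify $V_I,V_J$ as $\Omega$-Hamiltonian with potentials $-\nu^I_2,-\nu^J_3$, obtain $\Phi$ from $\sigma d\Phi=\theta_I^\sharp-\theta_J^\sharp=2(V_I-V_J)$, then invert $i_{\nabla f}\Omega=d(\nu^I_2+\nu^J_3)$ using \eqref{f:OmegaGH} and contract with \eqref{f:metricformula}. The one step you flag as an obstacle, $\theta_I+\theta_J=0$, is exactly the standard four-dimensional Lee-form identity the paper also invokes (see the proof of Proposition \ref{p:4dlagrangian}), so nothing further is needed.
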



Using this understanding of $\Phi$ we can give a description of $X_J$ in terms of the tangent space decomposition induced by $X_I$, and vice versa.

\begin{lemma}\label{lem: Y_decomposition}
    Let $(M^{4}, g, I, J)$ be a nondegenerate GKRS. Then, in the locus where $X_I,X_J$ are not zero, one has \begin{equation}\label{eq:Y_decomposition}
        X_J = \eta_I(X_J) X_I + \gl W_I KX_I, \quad X_I = \eta_J(X_I) X_J - \gl W_J KX_J,
    \end{equation}
    where $\lambda \equiv \Omega(X_I,X_J)$ is constant.
\end{lemma}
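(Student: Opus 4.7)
\emph{Plan.} The strategy is to expand $X_J$ in the frame $\{X_I, IX_I, JX_I, KX_I\}$ of $TM$ on the nondegenerate locus where $X_I\neq 0$ and $|p|<1$. The $X_I$-component is $W_Ig(X_I,X_J)=\eta_I(X_J)$ by definition of $\eta_I$. It then remains to (a) kill the $IX_I$ and $JX_I$ components of $X_J$, (b) identify the $KX_I$ coefficient with $\lambda W_I$, and (c) verify that $\lambda$ is constant. The symmetric decomposition of $X_I$ follows by interchanging the roles of $I$ and $J$, the sign change in front of $\lambda W_J$ coming from $\Omega(X_J,X_I)=-\lambda$.

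For (a), I would invoke Proposition~\ref{p:4dlagrangian} together with the $4$-dimensional identity $(I\pm J)^2=-2(1\pm p)\,\mathrm{Id}$ (a consequence of $\{I,J\}=-2p\,\mathrm{Id}$), which yields $F_\pm=\tfrac{1}{1\pm p}(\omega_I\pm\omega_J)$. Hence $F_\pm(X_I,X_J)=0$ is equivalent to $g(IX_I,X_J)=g(JX_I,X_J)=0$. A direct calculation using $K=IJ+p$ and its corollary $K^2=p^2-1$ produces the Gram matrix of $\{IX_I,JX_I,KX_I\}$ in the block form
\[
W_I^{-1}\begin{pmatrix}1&p&0\\p&1&0\\0&0&1-p^2\end{pmatrix},
\]
whose upper $2\times 2$ block is invertible on the nondegenerate locus. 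The two vanishing conditions therefore force the $IX_I$ and $JX_I$ components of $X_J$ to be zero, leaving $X_J=\eta_I(X_J)X_I+d\cdot KX_I$ for some function $d$.

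For (b), I would combine $\Omega=\sigma^{-1}$ with $\sigma=Kg^{-1}$ (Proposition~\ref{p:HitchinPoisson}) to realize $\Omega$ as the $2$-form $\Omega(v,w)=g(v,K^{-1}w)$. Then $\Omega(X_I,KX_I)=g(X_I,X_I)=W_I^{-1}$, so $\lambda=\Omega(X_I,X_J)=d\cdot W_I^{-1}$ gives $d=\lambda W_I$. For (c), Proposition~\ref{p:GKRSsymmetries} provides $L_{X_I}\sigma=L_{X_J}\sigma=0$ and $[X_I,X_J]=0$; thus $\lambda=\{\nu^I_1,\nu^J_1\}_\sigma$ is a Poisson bracket of two commuting Hamiltonian flows, whose Hamiltonian vector field is (up to sign) the commutator $[X_I,X_J]=0$, so $d\lambda=0$ and $\lambda$ is constant on the connected nondegenerate locus. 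The main subtle point is the Gram-matrix computation in (a), where the non-orthogonality $g(IX_I,JX_I)=pW_I^{-1}$ must be handled carefully; the remaining steps are essentially linear algebra and a one-line Cartan-formula argument.
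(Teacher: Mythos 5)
Your proof is correct, and the overall skeleton (split off the $\eta_I$-vertical part, show the horizontal part is proportional to $KX_I$, evaluate the coefficient against $\Omega(X_I,KX_I)=\pm W_I^{-1}$, and get constancy of $\lambda$ from $L_{X_J}\,i_{X_I}\Omega=0$) matches the paper's; however, the central step is reached by a genuinely different route. The paper works entirely with the Poisson structure: from $\Phi = 2\nu^J_3-2\nu^I_2+const$ (Proposition \ref{p:expressions with Hamiltonians}), the invariance $L_{X_I}\Phi=L_{X_J}\Phi=0$, and the $(2,0)+(0,2)$ type of $\Omega$, it deduces $i_{X_J}d\nu^I_2=i_{X_J}d\nu^I_3=0$, whence $X^H_J$ is proportional to $\partial/\partial\nu^I_1=-W_IKX_I$. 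You instead feed the soliton hypothesis in through Proposition \ref{p:4dlagrangian}: $F_\pm(X_I,X_J)=0$ together with $F_\pm=\tfrac{1}{1\pm p}(\omega_I\pm\omega_J)$ gives $g(IX_I,X_J)=g(JX_I,X_J)=0$, and your Gram-matrix computation (which is right, including the off-diagonal entry $pW_I^{-1}$ and the invertibility of the upper block when $p^2<1$, guaranteed on the nondegeneracy locus) then kills the unwanted components. The two sets of orthogonality conditions are in fact linearly equivalent --- using $IK=pI-J$ and $JK=I-pJ$ one checks that $\{\Omega(IX_I,X_J)=\Omega(JX_I,X_J)=0\}$ and $\{\omega_I(X_I,X_J)=\omega_J(X_I,X_J)=0\}$ differ by an invertible matrix of determinant $1-p^2$ --- so the content is the same; your derivation is more elementary and purely linear-algebraic, at the price of invoking a different a priori consequence of the soliton equation, while the paper's version has the side benefit of producing the full list of vanishing contractions $i_{X_I}d\nu^J_2=i_{X_I}d\nu^J_3=i_{X_J}d\nu^I_2=i_{X_J}d\nu^I_3=0$ that are reused in the Monge--Amp\`ere reduction. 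Your constancy argument for $\lambda$ (Hamiltonian vector field of the Poisson bracket of two commuting $\Omega$-preserving flows vanishes, plus nondegeneracy of $\Omega$) is the paper's Cartan-formula computation in different clothing. The only caveat is the sign of $\Omega(X_I,KX_I)$, which depends on whether $\Omega=\sigma^{-1}=gK^{-1}$ acts through the first or second slot; the paper's own proof is not internally consistent on this point ($X^H_J=-\lambda W_IKX_I$ in the body versus $+\lambda W_IKX_I$ in the statement), so this is a convention to pin down rather than an error on your part.
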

\begin{proof}
    First of all, the orthogonal decomposition provided by $\eta_I$ gives
    \begin{align*}
        X_J = \eta_I(X_J) X_I + X_J^H.
    \end{align*}
    We will deduce that $X_J^H$ is a multiple of $KX_I$ using the equation for $\Phi$ in Proposition \ref{p:expressions with Hamiltonians}.  In any chart there exist Hamiltonian potentials for $V_I=-IX_I$ and $V_J=-JX_J$, so we have the local expression
    \[
    \Phi=2\nu^J_3-2\nu^I_2+const.
    \]
    Now, $i_{X_I}d\Phi=L_{X_I}\Phi=0$ and similarly $i_{X_J} d \Phi = L_{X_J}\Phi=0$. Furthermore, using that $\Omega$ is of type $(2,0) + (0,2)$ with respect to both $I$ and $J$, we have that $i_{X_I}d\nu^I_2= i_{X_J} d \nu^J_3 = 0$. 
    Consequently,
    \begin{align*}
    i_{X_I}d\nu^J_3= i_{X_I} d (\Phi + 2 \nu^I_2) = \Omega(JX_J,X_I)=0,\\
    i_{X_J}d\nu^I_2=i_{X_J} d (2 \nu^J_3 - \Phi) = \Omega(IX_I,X_J)=0.
    \end{align*}
    Using again that $\Omega$ is type $(2,0) + (0,2)$, it furthermore follows that
    \begin{equation*}
    i_{X_J}d\nu^I_3=0,\qquad i_{X_I}d\nu^J_2=0.
    \end{equation*}
    It follows that
    \begin{align*}
        X_J^H \nu^I_2 = i_{X_J^H} d \nu^I_2 = i_{X_J} d \nu^I_2 - i_{\eta_I(X_J) X_I} d \nu^I_2 = 0,\\
        X_J^H \nu^I_3 = i_{X_J^H} d \nu^I_3 = i_{X_J} d \nu^I_3 - i_{\eta_I(X_J) X_I} d \nu^I_3 = 0.
    \end{align*}
    It follows that there exists a function $\gl$ such that $X_J^H = \gl \frac{\del}{\del \nu^I_1}$.  Note that it follows from the definitions of $\nu^I$ that $\frac{\del}{\del \nu^I_1} = - W_I K X_I$, hence $X_J^H = - \gl W_I KX_I$, as claimed.  
    
    It remains to show that $\gl$ is actually constant.  First notice that since $[X_I, X_J] = 0$ and $X_J$ preserves the GK structure it follows that $L_{X_J} d \nu^I_1 = L_{X_J} (i_{X_I} \Omega) = 0$.  Since also $i_{X_I} d \nu^I_1 = 0$ it follows that
    $$
    L_{X_J^H} d \nu^I_1 = L_{X_J}d\nu^I_1 - L_{\eta_I(X_J)X_I}d\nu^I_1 = - d i_{\eta_I(X_J) X_I} d \nu^I_1 = 0,
    $$ 
    giving the claim that $\gl$ is constant.  Finally we compute $\gl$ by observing
    \[
    \Omega(X_I,X_J)=\gl W_I\Omega(X_I,KX_I)=\gl.
    \]
    The same argument can be applied to decompose $X_I$ as claimed.
\end{proof}

\begin{rmk}
    The case $\gl = 0$ corresponds to a rank $1$ soliton while for rank $2$ solitons $\gl\neq0$.  Notice also this this implies that the rank cannot change in the nondegeneracy locus of $M$.
\end{rmk}

Using the description of $X_I$ and $X_J$ above, we next establish the claim that $I \tor_p = J \tor_p$.  This is precisely the condition required by \cite[Prop. 4]{boulanger2019toric} for a $\tor$-invariant GK structure $(g, I, J, b)$ tamed by a symplectic form to be obtained by the construction of \S \ref{ss:Atransform}.  While we could then immediately plug in to the general theory developed above, it is relevant for the examples to follow to explicitly derive further geometric characteristics of this particular setting, and to give an a priori derivation of the underlying symplectic potential and the relevant Monge-Amp\`ere equation.

\begin{lemma} \label{p:spanprop}
	Let $(M^4,g,I,J)$ be a nondegenerate rank 2 GKRS, and let $\tor = \spn_{\mathbb R} \{X_I, X_J \}$.  At points where $\dim \tor = 2$ we have
	\[
	I \tor = J \tor.
	\]
\end{lemma}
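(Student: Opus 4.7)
The plan is to combine Lemma~\ref{lem: Y_decomposition} with the four-dimensional identity $\{I,J\} = -2p\mathbf{1}$ of (\ref{f:IJcomm}). At a point where $\dim\tor = 2$, both $X_I$ and $X_J$ are nonzero, and the rank-$2$ assumption forces the (globally constant) coefficient $\lambda = \Omega(X_I, X_J)$ to be nonzero. The decomposition
\[ X_J = \eta_I(X_J)\, X_I + \lambda W_I\, K X_I \]
from Lemma~\ref{lem: Y_decomposition} is therefore available at such a point with $\lambda W_I$ a nonzero real number, and it is precisely $KX_I$ that supplies the transverse direction needed to span $\tor$.

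From (\ref{f:IJcomm}) one reads off the two equivalent forms $K = IJ + p\,\mathbf{1} = -JI - p\,\mathbf{1}$, and hence
\[ I(KX_I) = -JX_I + p\, IX_I, \qquad J(KX_I) = IX_I - p\, JX_I. \]
Applying $I$ and $J$ in turn to the displayed decomposition of $X_J$ therefore yields
\begin{align*}
    IX_J &= \bigl(\eta_I(X_J) + p\lambda W_I\bigr)\, IX_I - \lambda W_I\, JX_I, \\
    JX_J &= \lambda W_I\, IX_I + \bigl(\eta_I(X_J) - p\lambda W_I\bigr)\, JX_I.
\end{align*}
Since $\lambda W_I \neq 0$, the first identity can be solved for $JX_I$ as a linear combination of $IX_I$ and $IX_J$, placing $JX_I \in I\tor$. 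Substituting back into the second identity writes $JX_J$ as a linear combination of $IX_I$ and $JX_I$, both of which lie in $I\tor$, so $JX_J \in I\tor$ as well. Thus $J\tor \subseteq I\tor$, and the reverse inclusion follows by dimension, since $I$ and $J$ act as fiberwise linear automorphisms, so $\dim I\tor = \dim J\tor = \dim \tor = 2$.

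The only genuinely non-routine ingredient is the input $\lambda \neq 0$, which is exactly the rank-$2$ hypothesis (Lemma~\ref{lem: Y_decomposition} identifies $\lambda = 0$ with the rank-$1$ case). Everything else is straightforward linear algebra specific to real dimension four, driven by (\ref{f:IJcomm}); it is this four-dimensional simplification that makes the condition $I\tor = J\tor$ automatic here, thereby enabling the subsequent application of Proposition~\ref{p:GK-locally-toric} in the proof of Theorem~\ref{t:4dmainthm}.
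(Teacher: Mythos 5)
Your proof is correct and follows essentially the same route as the paper: decompose $X_J$ via Lemma~\ref{lem: Y_decomposition}, apply $I$ and $J$ using the anticommutation identity \eqref{f:IJcomm}, and use $\gl W_I \neq 0$ to place $JX_I$ and then $JX_J$ in $I\tor$, concluding by a dimension count. (Incidentally, your coefficient $I(KX_I) = -JX_I + p\,IX_I$ is the correct one; the factor $2p$ appearing in the paper's displayed computation is a harmless typo that does not affect the argument.)
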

\begin{proof}
	Using the expression for $X_J$ from Lemma \ref{lem: Y_decomposition} and (\ref{f:IJcomm}) we compute
    \begin{align*}
        IX_J =&\ \eta_I(X_J) IX_I + \gl W_I IK X_I = \eta_I(X_J) IX_I + \gl W_I (- J + 2p I ) X_I.
    \end{align*}
    Since $\gl \neq 0$ and $W_I \neq 0$, this gives $JX_I \in \spn_{\mathbb R} \{ IX_I, IX_J\}$.  Similarly we obtain $JX_J \in \spn_{\mathbb R} \{ IX_I, IX_J \}$, and the proposition follows.
\end{proof}

\begin{proof}[\bf Proof of Theorem \ref{t:4dmainthm}] Given $(M^4, g, I, J)$ a rank $2$ GKRS with $\gs$ not vanishing identically, we know by \cite{AGG} (cf. also \cite{ASU3} \S 2) that it is symplectic-type on a dense open set.  This statement holds relative to both $F_{\pm}$, and we choose $F = F_+$.  The rank $2$ hypothesis gives a $2$-dimensional space $\tor = \spn_{\mathbb R} \{X_I, X_J\}$ of Killing vector fields preserving the structure.  We know by Proposition \ref{p:4dlagrangian} that $F_{|\tor \times \tor} \equiv 0$, and by Lemma \ref{p:spanprop} that $I \tor = J \tor$.  Thus by Theorem \ref{t:Atypeconstruction} $(g, I, J)$ is an $A$-deformation of a locally toric KRS as claimed.  The completeness of this toric KRS in the case the GKRS is symplectic-type follows from Proposition \ref{p:complete}, whence it follows from Corollary \ref{c:rank1fromCY} that the toric KRS is not Ricci-flat.
\end{proof}

\subsection{Monge Amp\`ere reduction}

Here we give an alternative derivation of the Monge-Amp\`ere equation underlying a four-dimensional rank $2$ GKRS.  To begin we observe identities relating the different momenta:

\begin{lemma} Let $(M^{4}, g, I, J)$ be a nondegenerate rank $2$ GKRS.  Then
    \begin{gather}\label{eq:A}
    \begin{split}
    d\nu^I_-=&\ \frac{1}{\gl(1+p)}(|X_I|^2d\nu^J_+-g(X_I,X_J)d\nu^I_+),\\
    d\nu^J_-=&\ \frac{1}{\gl(1+p)}(g(X_I,X_J)d\nu^J_+-|X_J|^2d\nu^I_+),\\
    d\nu^I_+=&\ \frac{1}{\gl(1-p)} \left( g(X_I,X_J) d \nu^I_- -\brs{X_I}^2 d \nu^J_- \right),\\
    d\nu^J_+=&\ \frac{1}{\gl(1-p)} \left( \brs{X_J}^2 d \nu^I_- - g(X_I,X_J) d \nu^J_- \right).
    \end{split}
    \end{gather}

\begin{proof}
    Let's start by noticing that the Hamiltonian function $\nu^J_3$ satisfies
    \begin{align*}
    d\nu^J_3=&\ i_{JX_J}\Omega\\
    =&\ i_{ \eta_I(X_J) JX_I + \gl W_I JK X_I} \Omega\\
    =&\ \eta_I(X_J)  d \nu^I_3 + \gl W_I i_{(I - p J)X_I} \Omega\\
    =&\ \eta_I(X_J) d\nu^I_3+\gl W_I(d\nu^I_2-pd\nu^I_3)\\
    =&\ \frac{g(X_I,X_J)-\gl p}{|X_I|^2}d\nu^I_3+\frac{\gl}{|X_I|^2}d\nu^I_2,
    \end{align*}
    and performing the same calculation for $\nu^J_2$ and interchanging the roles of $\nu^I$ and $\nu^J$ we arrive at equations
    \begin{align*}
    |X_I|^2d\nu^J_3&=(g(X_I,X_J)-\gl p)d\nu^I_3+\gl d\nu^I_2,\\
    |X_I|^2d\nu^J_2&=(g(X_I,X_J)+\gl p)d\nu^I_2-\gl d\nu^I_3,\\
    |X_J|^2d\nu^I_3&=(g(X_I,X_J)+\gl p)d\nu^J_3-\gl d\nu^J_2,\\
    |X_J|^2d\nu^I_2&=(g(X_I,X_J)-\gl p)d\nu^J_2+\gl d\nu^J_3.
    \end{align*}
    The lemma is an elementary consequence of these identities.
\end{proof}
\end{lemma}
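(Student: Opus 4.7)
The plan is to use the decompositions of $X_I, X_J$ provided by Lemma~\ref{lem: Y_decomposition} and then contract with $\Omega$ to convert each vector identity into an identity among the differentials of momenta. Concretely, I would start from
\[ X_J = \eta_I(X_J)X_I + \gl W_I K X_I\]
and apply $J$ and $I$ to both sides. The four-dimensional identity $\{I,J\}=-2p\,\mathbf{1}$ from \eqref{f:IJcomm} combined with $K=\tfrac{1}{2}[I,J]$ gives $IJ=K-p$, from which a short algebraic manipulation yields $JK=I-pJ$ and $IK=-J+pI$. Substituting these and using $\eta_I(X_J)=W_I\,g(X_I,X_J)$, this expresses $JX_J$ and $IX_J$ as explicit linear combinations of $IX_I$ and $JX_I$ only.

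Contracting with $\Omega$ and using $d\nu^I_2 = i_{IX_I}\Omega$ and $d\nu^I_3=i_{JX_I}\Omega$, this yields the two scalar identities
\[ |X_I|^2\, d\nu^J_3=(g(X_I,X_J)-\gl p)\,d\nu^I_3+\gl\,d\nu^I_2,\qquad |X_I|^2\, d\nu^J_2=(g(X_I,X_J)+\gl p)\,d\nu^I_2-\gl\,d\nu^I_3.\]
Running precisely the same argument with the companion decomposition $X_I=\eta_J(X_I)X_J-\gl W_J KX_J$ gives the analogous pair of identities with the roles of $I$ and $J$ exchanged; the minus sign in the second decomposition flips a few signs but preserves the structure.

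Finally I would pass to the $(+,-)$ coordinates $d\nu^I_\pm = \tfrac{1}{2}(d\nu^I_2\pm d\nu^I_3)$ and similarly for $J$. Adding and subtracting the four scalar identities above, the $\gl p$ terms conspire to produce clean factors of $\gl(1\pm p)$; for instance the sum of the first two gives
\[ |X_I|^2\, d\nu^J_+ = g(X_I,X_J)\,d\nu^I_+ + \gl(1+p)\,d\nu^I_-,\]
from which the first identity of \eqref{eq:A} follows by solving for $d\nu^I_-$. The other three identities emerge from the remaining sums and differences in exactly the same way. No analytic input beyond Lemma~\ref{lem: Y_decomposition} and the pointwise algebraic identity \eqref{f:IJcomm} is needed; the only real care is in bookkeeping the signs and matching the $(1+p)$ and $(1-p)$ factors to the correct sums and differences.
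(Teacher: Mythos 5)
Your proposal is correct and follows essentially the same route as the paper: it uses the decomposition of Lemma \ref{lem: Y_decomposition}, the algebraic identities $JK=I-pJ$, $IK=-J+pI$ derived from \eqref{f:IJcomm}, contraction with $\Omega$ to get the four scalar identities, and then passes to the $(\pm)$ momenta by taking sums and differences. The only difference is that you spell out the final linear-algebra step that the paper dismisses as "an elementary consequence," and your intermediate identity $|X_I|^2\,d\nu^J_+ = g(X_I,X_J)\,d\nu^I_+ + \gl(1+p)\,d\nu^I_-$ checks out.
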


\begin{lemma} \label{l:potentialderivation} Let $(\mathring{M}^{4}, g, I, J)$ be a nondegenerate rank $2$ GKRS.  There exist locally defined $\tor$-invariant potential functions  $\varphi, \psi$ such that
    \begin{gather*}
        \gl(1+p) \left(\begin{matrix}
    \varphi_{\nu^I_+\nu^I_+} & \varphi_{\nu^I_+\nu^J_+}\\ \varphi_{\nu^J_+\nu^I_+} & \varphi_{\nu^J_+\nu^J_+}
    \end{matrix}\right)=
    \left(\begin{matrix}
    |Y|^2 & -g(X,Y)\\ -g(X,Y) & |X|^2
    \end{matrix}\right) = \gl(1 - p) 
    \begin{pmatrix}
        \psi_{\nu^I_-\nu^I_-} & \psi_{\nu^I_-\nu^J_-} \\
        \psi_{\nu^J_-\nu^I_-} & \psi_{\nu^J_-\nu^J_-}
    \end{pmatrix}.
    \end{gather*}
\end{lemma}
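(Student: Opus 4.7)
The plan is to recognize that equation~\eqref{eq:A} expresses the change of coordinates between the momentum pairs $(\nu^I_+,\nu^J_+)$ (momenta for $F_-$) and $(\nu^I_-,\nu^J_-)$ (momenta for $F_+$) via a \emph{symmetric} matrix; symmetry of this Jacobian is precisely the integrability condition needed for the existence of a local potential whose Hessian reproduces that matrix. Accordingly, $\varphi$ and $\psi$ will be constructed as local primitives of certain closed, $\tor$-invariant, $\tor$-horizontal one-forms, and the only steps that require verification are closedness and horizontality; everything else reduces to reading off first derivatives and differentiating once more.

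To construct $\varphi$, set
\[
\alpha := \nu^I_- \, d\nu^J_+ - \nu^J_- \, d\nu^I_+.
\]
Substituting the first pair of identities in~\eqref{eq:A} into $d\alpha = d\nu^I_-\wedge d\nu^J_+ - d\nu^J_-\wedge d\nu^I_+$, each of the two resulting terms evaluates to $-\tfrac{g(X_I,X_J)}{\gl(1+p)}\, d\nu^I_+\wedge d\nu^J_+$ and they cancel, so $d\alpha=0$. Next, arguing as in the proof of Lemma~\ref{lem: Y_decomposition} (using only that $\Omega$ is of type $(2,0)+(0,2)$ with respect to both $I$ and $J$), the vector fields $X_I, X_J$ annihilate each of the momenta $\nu^I_{\pm},\nu^J_{\pm}$, so $i_{X_I}\alpha = i_{X_J}\alpha = 0$; combined with the manifest $\tor$-invariance of $\alpha$, the Poincar\'e Lemma produces a local primitive $\varphi$ with $d\varphi = \alpha$, and $X_I\varphi = i_{X_I}d\varphi = 0$ (and likewise for $X_J$) shows $\varphi$ is $\tor$-invariant. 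In the rank $2$ nondegenerate setting the functions $(\nu^I_+,\nu^J_+)$ extend to local coordinates, so $\varphi$ descends to a function of these two variables. Reading off $\varphi_{\nu^I_+} = -\nu^J_-$ and $\varphi_{\nu^J_+} = \nu^I_-$ from $d\varphi=\alpha$ and applying~\eqref{eq:A} once more to compute the partial derivatives of $\nu^I_-,\nu^J_-$ in the coordinates $(\nu^I_+,\nu^J_+)$ yields the left-hand identity directly.

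The potential $\psi$ is then produced by the identical argument applied to
\[
\beta := \nu^J_+\, d\nu^I_- - \nu^I_+\, d\nu^J_-,
\]
whose closedness $d\beta = 0$ follows from the second pair of equations in~\eqref{eq:A} by the same two-line wedge-product cancellation; horizontality and $\tor$-invariance are inherited as before, and the Hessian of $\psi$ in $(\nu^I_-,\nu^J_-)$ is computed analogously. The main obstacle is conceptual rather than technical: one must observe that the symmetry of the matrix on the right-hand side of the claim is exactly the closedness of $\alpha$ and $\beta$ disguised as equality of mixed partials, and that the horizontality of these one-forms for the $\tor$-action is built into the $(2,0)+(0,2)$ type of $\Omega$. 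Once these observations are in place, the computations are essentially immediate.
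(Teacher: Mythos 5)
Your proposal is correct and follows essentially the same route as the paper: both construct $\varphi$ and $\psi$ as local primitives of the one-forms $-\nu^J_-\,d\nu^I_+ + \nu^I_-\,d\nu^J_+$ and $\nu^J_+\,d\nu^I_- - \nu^I_+\,d\nu^J_-$, whose closedness follows from the wedge-product cancellation in \eqref{eq:A}, and then read off the Hessians by differentiating the momenta once more via \eqref{eq:A}. The extra care you take with $\tor$-invariance and horizontality (via the $(2,0)+(0,2)$ type of $\Omega$, as in Lemma~\ref{lem: Y_decomposition}) is a welcome elaboration of a point the paper leaves implicit.
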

\begin{proof}
    It follows from (\ref{eq:A}) that the one forms
    \begin{align*}
         -\nu^J_- d \nu^I_+ + \nu^I_- d \nu^J_+, \qquad \nu^J_+ d \nu^I_- - \nu^I_+ d \nu^J_-,
    \end{align*}
    are both closed, and thus locally there exist potential functions $\varphi$ and $\psi$ such that
    \begin{align} \label{f:Legendre}
    \frac{\del \varphi}{\del \nu^I_+}= -\nu^J_-, \qquad \frac{\del \varphi}{\del \nu^J_+}= \nu^I_-, \qquad \frac{\del \psi}{\del \nu^I_-} = \nu^J_+, \qquad \frac{\del \psi}{\del \nu^J_-} = - \nu^I_+.
    \end{align}
    Returning to equations \eqref{eq:A} we easily derive the claimed expressions.
\end{proof}

\begin{prop} \label{p:MAreduction} 
    Let $(M^{4}, g, I, J)$ be a nondegenerate rank $2$ GKRS.  The potential functions $\varphi, \psi$ of Lemma \ref{l:potentialderivation} solve
    \begin{align} \label{f:MAreduction}
    \det(\mathrm{Hess}(\varphi)) = e^{2\left(\nu^J_+-\nu^I_+ -\varphi_{\nu^J_+} + \varphi_{\nu^I_+}\right)}, \quad 
    \det \Hess(\psi) = e^{2\left(\nu^I_- + \nu^J_- - \psi_{\nu^I_-} - \psi_{\nu^J_-}\right)}.
    \end{align}
\end{prop}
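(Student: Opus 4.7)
The plan is to prove the Monge-Amp\`ere equation for $\varphi$ by computing both sides independently and matching them; the identity for $\psi$ then follows by the analogous argument, interchanging the roles of $(\nu^I_+,\nu^J_+)$ and $(\nu^I_-,\nu^J_-)$.

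For the left-hand side I would compute directly from Lemma~\ref{l:potentialderivation}: since the matrix appearing there is a Gram matrix,
\[ \gl^2(1+p)^2\det\Hess(\varphi)=|X_I|^2|X_J|^2-g(X_I,X_J)^2=|X_I\wedge X_J|^2_g. \]
Using Lemma~\ref{lem: Y_decomposition} to decompose $X_I\wedge X_J=\gl W_I\,X_I\wedge KX_I$, together with the observations that $K=\tfrac12[I,J]$ is $g$-skew (hence $g(X_I,KX_I)=0$) and satisfies $K^2=(p^2-1)\mathbf{1}$ (a consequence of the four-dimensional identity $\{I,J\}=-2p\,\mathbf{1}$ and $I^2=J^2=-\mathbf{1}$), one gets $|KX_I|^2=(1-p^2)W_I^{-1}$, and consequently $\det\Hess(\varphi)=\frac{1-p}{1+p}$.

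For the right-hand side, the Legendre-type relations \eqref{f:Legendre} give $\varphi_{\nu^I_+}=-\nu^J_-$ and $\varphi_{\nu^J_+}=\nu^I_-$, so the exponent simplifies to $2(\nu^J_+-\nu^I_+-\nu^I_--\nu^J_-)=2(\nu^J_3-\nu^I_2)$.  By Proposition~\ref{p:expressions with Hamiltonians} this coincides with the Ricci potential $\Phi$ up to an additive constant, so the first equation in \eqref{f:MAreduction} reduces to the intrinsic identity $\Phi=\log\frac{1-p}{1+p}+\text{const}$, where the constant can be absorbed by shifting the ambiguous additive constants in the $\nu$-coordinates or in $\Phi$.

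To establish this last identity I would invoke the symplectic-type realization of the pure spinors of the underlying generalized K\"ahler structure: in the nondegenerate setting both $F_\pm=-2g(I\pm J)^{-1}$ are symplectic and, with the appropriate labelling, $\psi_1\propto e^{B_1+iF_-}$ and $\psi_2\propto e^{B_2+iF_+}$ for suitable real $B_i$.  Since the Mukai pairing is insensitive to real $B$-fields, $(\psi_1,\bar\psi_1)\propto F_-^2$ and $(\psi_2,\bar\psi_2)\propto F_+^2$.  Combining the expressions $F_\pm=\frac{1}{1\pm p}(\omega_I\pm\omega_J)$ with the four-dimensional identities $\omega_I\wedge\omega_I=\omega_J\wedge\omega_J=2\,dV_g$ and $\omega_I\wedge\omega_J=2p\,dV_g$ (both following from the self-duality of $\omega_I,\omega_J$ and $g(\omega_I,\omega_J)=2p$) yields $F_\pm^2=\frac{4}{1\pm p}\,dV_g$, whence $\Phi=\Psi_1-\Psi_2=\log\frac{1-p}{1+p}+\text{const}$, completing the proof of the first equation in \eqref{f:MAreduction}.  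The equation for $\psi$ is proved by the same scheme, ultimately giving $\det\Hess(\psi)=\frac{1+p}{1-p}=e^{-\Phi+\text{const}}$, which matches the corresponding RHS under the second pair of Legendre relations.  The chief delicate point I anticipate is the sign-bookkeeping in identifying which of $\psi_1,\psi_2$ corresponds to $e^{iF_+}$ versus $e^{iF_-}$ in Definition~\ref{d:Riccipot}; once this convention is pinned down, the rest of the argument is essentially mechanical.
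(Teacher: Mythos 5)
Your proposal is correct and follows essentially the same route as the paper: the left-hand side comes from the Gram determinant $|X_I|^2|X_J|^2-g(X_I,X_J)^2=\gl^2(1-p^2)$ extracted from Lemma~\ref{l:potentialderivation} together with the decomposition of Lemma~\ref{lem: Y_decomposition}, and the right-hand side comes from the Legendre relations \eqref{f:Legendre} and the formula for $\Phi$ in Proposition~\ref{p:expressions with Hamiltonians}. The one place you genuinely diverge is the identity $e^{\Phi}=\tfrac{1-p}{1+p}$: the paper simply asserts it (it is standard in the four-dimensional nondegenerate setting), whereas you derive it from the Mukai pairings of the symplectic-type pure spinors via $F_\pm^2=\tfrac{4}{1\pm p}\,dV_g$. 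That derivation is sound --- a $d$- or $d_H$-closed generator of a symplectic-type canonical bundle must be a constant multiple of $e^{B+iF}$ (the degree-one component of the closedness equation kills any function factor), so the proportionality constants really are constants --- and the labelling ambiguity you flag is harmless since it only affects $\Phi$ by an overall sign/constant that is fixed by matching against Proposition~\ref{p:transgression} or, as both you and the paper implicitly do, by absorbing additive constants into the momenta $\nu$.
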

\begin{proof} 
    We first recall that in the Gibbons-Hawking ansatz we have $\brs{KX_I}^2 = (1 - p^2)\brs{X_I}^2$.
    Using this and the expression for $\brs{X_J}^2$ coming from~\eqref{eq:Y_decomposition} we get the identity
    \begin{gather*}
    \begin{split}
    |X_I|^2|X_J|^2-g(X_I,X_J)^2=&\ \brs{X_I}^2 \left( \eta_I(X_J)^2 \brs{X_I}^2 + \gl^2 \brs{X_I}^{-4} \brs{KX_I}^2 \right) - g(X_I,X_J)^2\\
    =&\ \brs{X_I}^4 \left( W_I g(X_I,X_J) \right)^2 + \gl^2 (1 - p^2) - g(X_I,X_J)^2\\
    =&\ \gl^2(1-p^2).
    \end{split}
    \end{gather*}
    Applying it to the result of Lemma \ref{l:potentialderivation}, we obtain the Monge-Amp\`ere type equation
    \[
    \det(\mathrm{Hess}(\varphi))= \frac{1-p}{1+p} = e^{\Phi} = \frac{1}{\det \mathrm{Hess}(\psi)}.
    \]
    Using the prescribed expression for $\Phi$ from Proposition \ref{p:expressions with Hamiltonians}, and equations (\ref{f:Legendre}) we obtain
    \[
    \Phi=2(\nu^J_+-\nu^J_--\nu^I_+-\nu^I_-)=2(\nu^J_+ - \nu^I_+ - \varphi_{\nu^J_+} + \varphi_{\nu^I_+}) = 2(\psi_{\nu^I_-} + \psi_{\nu^J_-} - \nu^I_- - \nu^J_-).
    \]
    Thus we obtain
    \[
    \det(\mathrm{Hess}(\varphi)) = e^{2\left(\nu^J_+-\nu^I_+ -\varphi_{\nu^J_+} + \varphi_{\nu^I_+}\right)}, \quad 
    \det \Hess(\psi) = e^{2\left(\nu^I_- + \nu^J_- - \psi_{\nu^I_-} - \psi_{\nu^J_-}\right)},
    \]
    as claimed.
\end{proof}
  
We now deduce an explicit formula for the soliton potential $f$ in the coordinates $(\nu^I_-,\nu^J_-)$ and $(\nu^I_+,\nu^J_+)$.
\begin{prop} \label{p:explicitf} 
    There exists a constant $c \in \mathbb R$ so that the function $\til{f} = f - c$ satisfies
    \begin{align*}
        \til{f}=&\ \log(1-p) - 2\psi_{\nu^I_-} - 2\psi_{\nu^J_-} = \log(1+p) - 2\varphi_{\nu^J_+} + 2\varphi_{\nu^I_+} .
    \end{align*}
\end{prop}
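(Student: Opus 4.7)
The plan is to reduce both equalities for $\tilde f$ to the explicit formula for $df$ in Proposition~\ref{p:expressions with Hamiltonians}, using the Legendre-type relations of Lemma~\ref{l:potentialderivation}, and then to observe that the two candidate right-hand sides differ only by a constant so that a single choice of $c$ works for both.

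First, I would rewrite the two candidates using $\varphi_{\nu^I_+}=-\nu^J_-$, $\varphi_{\nu^J_+}=\nu^I_-$, $\psi_{\nu^I_-}=\nu^J_+$, $\psi_{\nu^J_-}=-\nu^I_+$ to simplify them to
\[
\log(1+p)-2\nu^I_--2\nu^J_- \quad\text{and}\quad \log(1-p)+2\nu^I_+-2\nu^J_+.
\]
Their difference is $\log\tfrac{1-p}{1+p}+2(\nu^I_++\nu^I_--\nu^J_++\nu^J_-)$. By the identification $e^{\Phi}=(1-p)/(1+p)$ used in the proof of Proposition~\ref{p:MAreduction}, together with the formula $\Phi=-2(\nu^I_++\nu^I_--\nu^J_++\nu^J_-)+const$ of Proposition~\ref{p:expressions with Hamiltonians}, this difference is constant.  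Hence once I verify $d\tilde f=df$ for one of the two expressions, the other follows by adjusting $c$.

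Second, I would compute $d\tilde f$ for $\tilde f=\log(1+p)-2\nu^I_--2\nu^J_-$.  Differentiating the equality $\log(1-p)-\log(1+p)=-2(\nu^I_++\nu^I_--\nu^J_++\nu^J_-)+const$ gives
\[
\frac{-2\,dp}{1-p^2}=-2(d\nu^I_++d\nu^I_--d\nu^J_++d\nu^J_-),
\]
so $dp/(1+p)=(1-p)(d\nu^I_++d\nu^I_--d\nu^J_++d\nu^J_-)$.  Substituting into $d\tilde f$ and collecting terms in each $d\nu^{I,J}_\pm$ yields
\[
d\tilde f=(1-p)d\nu^I_+-(1+p)d\nu^I_--(1-p)d\nu^J_+-(1+p)d\nu^J_-,
\]
which matches the formula for $df$ in Proposition~\ref{p:expressions with Hamiltonians} exactly.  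Choosing $c$ so that $\tilde f=f-c$ completes both assertions.

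The only real obstacle is bookkeeping: one must correctly invoke $e^{\Phi}=(1-p)/(1+p)$ from Proposition~\ref{p:MAreduction} to connect the Ricci-potential expression for $\Phi$ with the $p$-expression appearing inside $\log(1\pm p)$, and one must track signs carefully through the Legendre relations.  No further analytic input is needed beyond what is already assembled in Propositions~\ref{p:expressions with Hamiltonians} and~\ref{p:MAreduction} and Lemma~\ref{l:potentialderivation}.
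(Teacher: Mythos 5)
Your proposal is correct and follows essentially the same route as the paper: both arguments rest on the $df$ formula of Proposition \ref{p:expressions with Hamiltonians}, the Legendre relations of Lemma \ref{l:potentialderivation}, and the identity $e^{\Phi}=(1-p)/(1+p)$ from Proposition \ref{p:MAreduction}; you simply differentiate the candidate expression and match it with $df$, rather than massaging $df$ into an exact differential as the paper does, which makes the bookkeeping lighter. The one point to sharpen is that a single constant $c$ can serve both equalities only if the two candidate right-hand sides agree exactly, not merely up to a constant; this follows from the exact normalization $\log\tfrac{1-p}{1+p}=-2(\nu^I_++\nu^I_--\nu^J_++\nu^J_-)$ used in Proposition \ref{p:MAreduction}, which is precisely how the paper concludes that its two integration constants coincide.
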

\begin{proof}
    We compute, starting from the formula for $df$ in Proposition \ref{p:expressions with Hamiltonians} and using (\ref{f:Legendre}),
    \begin{align*}
        df =&\ (1-p)(d\nu^I_+-d\nu^J_+) - (1+p)(d\nu^I_-+d\nu^J_-)\\
        =&\ (1-p)\left[ - d\left(\psi_{\nu^J_-} + \psi_{\nu^I_-}\right) - \frac{1+p}{1-p}\left(d\nu^I_-+d\nu^J_-\right) \right]\\
        =&\ \frac{1-p}{2}\left[ - d\left(2\psi_{\nu^J_-} + 2\psi_{\nu^I_-}\right) - e^{2\left(- \psi_{\nu^I_-} - \psi_{\nu^J_-} + \nu^I_- + \nu^J_-\right)}(2d\nu^I_- + 2d\nu^J_-) \right]\\
        =&\ \frac{1-p}{2}\left[ - d\left(2\psi_{\nu^J_-} + 2\psi_{\nu^I_-}\right) - d\left(e^{2\left(- \psi_{\nu^I_-} - \psi_{\nu^J_-} + \nu^I_- + \nu^J_-\right)}\right) - e^{2\left(- \psi_{\nu^I_-} - \psi_{\nu^J_-} + \nu^I_- + \nu^J_-\right)}d(2\psi_{\nu^J_-} + 2\psi_{\nu^I_-}) \right]\\
        =&\ \frac{1-p}{2}\left[ - d\left(e^{2\left(- \psi_{\nu^I_-} - \psi_{\nu^J_-} + \nu^I_- + \nu^J_-\right)}\right) - \left(1+e^{2\left(-\psi_{\nu^I_-} - \psi_{\nu^J_-} + \nu^I_- + \nu^J_-\right)}\right)d\left(2\psi_{\nu^J_-} + 2\psi_{\nu^I_-}\right) \right]\\
        =&\ - \frac{1-p}{2} d\left(\frac{1+p}{1-p}\right) - 2d\left(\psi_{\nu^J_-} + \psi_{\nu^I_-}\right)\\
        =&\ -\frac{dp}{1-p} - 2d\left(\psi_{\nu^J_-} + \psi_{\nu^I_-}\right)\\
        =&\ d\left( \log(1-p) - 2\psi_{\nu^J_-} - 2\psi_{\nu^I_-}\right),
    \end{align*}
    thus it follows that there exists a constant $c$ so that
    \begin{align*}
    f = \log (1 - p) - 2\psi_{\nu^J_-} - 2\psi_{\nu^I_-} + c.
    \end{align*}
    By a similar computation in the $(\nu^I_+,\nu^J_+)$ coordinates we obtain that there exists a constant $c'$ such that
    \begin{align*}
    f = \log (1 + p) - 2\varphi_{\nu^J_+} + 2\varphi_{\nu^I_+} + c'.
    \end{align*}
    Subtracting these equations we find using $\tfrac{1-p}{1+p}=e^\Phi$ that the two constants are the same, and the proposition follows.
\end{proof}

\begin{rmk}
    Recall that $2\nu_\pm^I,2\nu_\pm^J$ are momenta for $F_\mp$ with respect to $X_I,X_J$, which were called $\mu_\mp$ in \S \ref{s:toricGKRS}.
    Setting $F=F_+$, by Theorem \ref{t:4dmainthm} $(M^4,g,I,J,\t)$ is an $A$-deformation of a locally toric KRS $(M^4,g_u,J_u,F,\t)$. It can be checked that $u=\tfrac{1}{\gl} \psi$ and $A=\begin{pmatrix}
        0 & \tfrac{1}{\gl}\\
        -\tfrac{1}{\gl} & 0
    \end{pmatrix}$.
    The MA equation \eqref{f:MAreduction} then reads as
    $$\log\det\Hess(u) + \log\gl^2 = -\gl\left<\N u , X_I^*+X_J^*\right> + \left<\mu_+ , X_I+X_J\right>,$$
    hence matching \S \ref{s:toricGKRS} with $b=(X_I+X_J)$ and $c=A^{-1}(X_J-X_I)$. Similarly, for the soliton potential $\tilde f$ computed in Proposition \ref{p:explicitf} we notice that $1-p = \tfrac{2}{\gl^2\det(\Hess\ u\ +A)}$ hence it holds
    \begin{align*}
        \tilde f &= -\log\det(\Hess(u) + A) - \la\nabla u,c\ra +\log(2\gl^2),
    \end{align*}
    matching the equation in Remark \ref{r:GKRSpotential}.
\end{rmk}


\end{document}